\theoremstyle{definition}
\newtheorem{thm}{Theorem}[section]
\newtheorem{prop}[thm]{Proposition}
\newtheorem{cor}[thm]{Corollary}
\newtheorem{dfn}[thm]{Definition}
\DeclareMathOperator{\Ad}{\mathrm{Ad}}
\DeclareMathOperator{\ad}{\mathrm{ad}}
\DeclareMathOperator{\Cent}{\mathrm{Cent}}
\DeclareMathOperator{\vspan}{\mathrm{span}}
\DeclareMathOperator{\diag}{\mathrm{diag}}
\def\ord#1^#2{#1$^{\text{#2}}$}
\def\lie#1{\mathfrak{#1}}
\def\hlie#1{\hat{\mathfrak{#1}}}
\def\uqr#1^#2{\text{$U_q^{#2}(\lie #1)$}}
\def\uqhr#1^#2{\text{$U_q^{#2}(\hlie #1)$}}
\def\us#1^#2{\text{$U_{\xi}^{#2}(\lie #1)$}}
\def\ush#1^#2{\text{$U_{\xi}^{#2}(\hlie #1)$}}
\def\dus#1^#2{\text{$\dot{U}_{\xi}^{#2}(\lie #1)$}}
\def\dush#1^#2{\text{$\dot{U}_{\xi}^{#2}(\hlie #1)$}}
\def\opl_#1^#2{\text{\scriptsize$\bigoplus\limits_{\text{\footnotesize$#1$}}^{\text{\footnotesize$#2$}}$}}
\def\otm_#1^#2{\text{\scriptsize$\bigotimes\limits_{\text{\footnotesize$#1$}}^{\text{\footnotesize$#2$}}$}}
\renewcommand{\thefootnote}
\begin{document}


\title[]{
{\Large Geodesic orbit spaces in real flag manifolds}
}

\author[]{Brian Grajales, Lino Grama and Caio J. C. Negreiros
}

\address{IMECC-Unicamp, Departamento de Matem\'{a}tica. Rua S\'{e}rgio Buarque de Holanda,
651, Cidade Universit\'{a}ria Zeferino Vaz. 13083-859, Campinas - SP, Brazil.} 

\dedicatory{Dedicated to Professor Karen Uhlenbeck on the occasion of her 75th birthday.}

\begin{abstract}
We describe the invariant metrics on real flag manifolds and classify those with the following property: every geodesic is the orbit of a one-parameter subgroup.  Such a metric is called g.o. (geodesic orbit). In contrast to the complex case, on real flag manifolds the isotropy representation can have equivalent submodules, which makes invariant metrics depend on more parameters and allows us to find more cases in which non-trivial g.o. metrics exist.
\end{abstract}

\maketitle
\tableofcontents

\section{Introduction}

$\indent$The theory of harmonic maps is a classic topic in Mathematics. This theory lives in the intersection of several areas of Mathematics like analysis of PDE, calculus of variations, differential geometry and so on. Due to this interdisciplinarity, the theory of harmonic maps archives several deep results in Mathematics and Theoretical Physics. See for instance, \cite{an5}, \cite{sacks-uhle}, \cite{S-U}, \cite{ulem1}.

Based on the article \cite{ulem1}  of Professor Uhlenbeck and a list of problems 
at the end of  the article, the third author of this paper followed her
suggestion to discuss the Question 4 in this list as his thesis problem.
This problem concerns to the study of harmonic maps on flag manifolds
and is based on the interplay between the Geometry of non-necessarily
symmetric spaces and the Analysis of flag manifolds. This study is
linked deeply with Lie Theory, and continuing to motivate the study of other
variational problems. 

In \cite{neg-thesis} and \cite{neg-indiana} was studied harmonic maps into flag manifolds, using holomorphic horizontal maps and by exploring the (almost) complex geometry of the complex flag manifolds (see also \cite{caio-estab}). 

In the theory of harmonic maps it is well know that the geometry of the target space plays a fundamental role. Therefore, understand the geometry (Riemannian and Hermitian) of flag manifolds motivated by the study harmonic maps is a natural step. For instance the understanding of $(1,2)$-symplectic structures on flag manifolds is useful to provide examples of harmonic maps and motivate a question about its classification on complex flag manifolds. The answer of such question culminate with the complete characterization of the invariant Hermitian geometry of full flag manifolds in \cite{an7}.

In this paper we deal with Riemannian geometry of {\em real flag manifolds}. By real flag manifolds we will refer the coset (homogeneous) space given the the split real form of a complex Lie group by some parabolic subgroup (see Section \ref{prelim} for details). We are interested in the problem of classifications of {\em homogeneous geodesics}, namely, geodesics given by an orbit of 1-parameter subgroup of the isometry group. References about the study of homogeneous geodesics on {\em complex} flag manifolds are \cite{DA2}, \cite{caio-lino-nir}, \cite{dP-G}.

An interesting class of riemannian homogeneous spaces are the so called {\em g.o spaces}, namely a homogeneous space such that {\em every} geodesic is a homogeneous geodesic. Examples of g.o. spaces are compact Lie groups equipped with the bi-invariant metric, naturally reductive homogeneous space, the normal metric on homogeneous space of compact Lie groups and so on. The classification of {\em complex} flag manifolds which are {\em g.o. spaces} is given in \cite {DA2}.

In this paper we provide a description of invariant metrics on real flag manifolds. As an application we prove our main result: we give the classification of the real flag manifolds of classical Lie groups admitting invariant riemannian metrics such that every geodesic is an homogeneous geodesic, that is, its become a {\em g.o. space}. 

A remarkable difference between {\em real} and {\em complex} flag manifolds lies in the isotropy representation. Such isotropy representation is essential in order to describe invariant tensors on homogeneous spaces (e.g. riemannian metrics, almost complex structures, differential forms). In the case of complex flag manifolds, the isotropy representation decomposes into irreducible {\em non-equivalent} components and for real flag manifolds one can have {\em equivalent} components of the isotropy representation.  A reference for the description of the isotropy representation of real flag manifolds is given in \cite{PSM}. For recent results concerning to the invariant geometry of real flag manifolds we suggest \cite{dB-SM} and \cite{tese-brian}.

The paper is organized as follows: in Section 2 we review the basics facts about real flag manifolds. In Section 3 we describe the invariant metrics on real flag manifolds associated to classical Lie groups, and in Section 4 we prove our main result: the classification of the real flag manifolds which are {\em g.o. spaces}.

{\bf Acknowledgment:} The third named author of this paper wants to express his deep gratitude to Professor Uhlenbeck for all her immense support through all these years. Professor Uhlenbeck  was the Ph.D. Thesis advisor of prof. Negreiros at University of Chicago (USA) who was the Ph.D. Thesis advisor of prof. Grama at University of Campinas (Brazil) who was the Ph.D. Thesis advisor of B. Grajales also at University of Campinas.

\section{Preliminaries: Real Flag Manifolds}
\label{prelim}
Let $\mathfrak{g}$ be a non-compact, simple real Lie algebra. We consider the case where $\mathfrak{g}$ is a split real form of a complex Lie algebra. A generalized flag manifold of $\mathfrak{g}$ is the homogeneous space $\mathbb{F}_\Theta=G/P_\Theta$ where $G$ is a connected Lie group with Lie algebra $\mathfrak{g}$ and $P_\Theta\subset G$ is a parabolic subgroup. The Lie algebra $\mathfrak{p}_\Theta$ of $P_\Theta$ is a parabolic subalgebra, which is the direct sum of the eigenspaces  associated with the non-negative eigenvalues of $\ad(H_\Theta)$, where $H_\Theta\in\mathfrak{g}$ is an element chosen in an appropriate way. If $K\subset G$ is a maximal compact subgroup and $K_\Theta=K\cap P_\Theta$ then we have and identification $\mathbb{F}_\Theta=K/K_\Theta.$ It is a known fact that flag manifolds are reductive homogeneous spaces, and therefore we have a reductive decomposition of the Lie algebra of $K$, given by 
\begin{center}
$\mathfrak{k}=\mathfrak{k}_\Theta\oplus\mathfrak{m}_\Theta,$
\end{center}
where $\mathfrak{m}_\Theta$ is an $\Ad(K_\Theta)-$invariant complement of $\mathfrak{k}_\Theta.$

For an alternative description of the parabolic subalgebra we consider $\mathfrak{g}=\mathfrak{k}\oplus\mathfrak{s}$ a Cartan decomposition and $\mathfrak{a}\subset\mathfrak{s}$ a maximal abelian subalgebra. Denote by $\Pi$ the associated set of roots and by
\begin{center}
$\mathfrak{g}=\displaystyle\mathfrak{g}_0\oplus \bigoplus\limits_{\alpha\in\Pi}\mathfrak{g}_{\alpha},$
\end{center}
the corresponding root space decomposition. Fixing a set $\Pi^+$ of positive roots let $\Sigma$ be the corresponding set of simple roots. Any $\Theta\subseteq\Sigma$ defines a parabolic subalgebra
\begin{center}
$\mathfrak{p}_\Theta=\displaystyle\mathfrak{g}_0\oplus \bigoplus\limits_{\alpha\in\Pi^+}\mathfrak{g}_{\alpha}\oplus \bigoplus\limits_{\alpha\in\langle\Theta\rangle^-}\mathfrak{g}_{\alpha},$
\end{center}
where $\langle\Theta\rangle^-$ is the set of negative roots generated by $\Theta.$ We say that $H_\Theta\in\mathfrak{a}$ is \textit{characteristic} for $\Theta$ if $\alpha(H_\Theta)\geq 0$ for every $\alpha\in\Theta$ and $\Theta=\{\alpha\in\Sigma:\alpha(H_\Theta)=0\}$. The subalgebra 
\begin{center}
$\mathfrak{n}_\Theta^-=\displaystyle\bigoplus\limits_{\alpha\in\Pi^-\setminus\langle\Theta\rangle^-}\mathfrak{g}_\alpha,$
\end{center}
is identified with the tangent space of $\mathbb{F}_\Theta$ at the origin $eP_\Theta.$ If $\mathfrak{z}_\Theta=\Cent_{\mathfrak{g}}(H_\Theta)$, then the adjoint representation of $\mathfrak{z}_\Theta$ on $\mathfrak{n}_\Theta^-$ is completely reducible and we can decompose 
\begin{center}
$\mathfrak{n}_\Theta^-=\displaystyle\bigoplus\limits_{\sigma}V_\Theta^{\sigma},$
\end{center}
into $\mathfrak{z}_\Theta-$invariant, irreducible and non-equivalent subspaces.

With this notation we have that $K_\Theta=\Cent_K(H_\Theta)$ and $\mathfrak{k}_\Theta=\Cent_{\mathfrak{k}}(H_\Theta).$ The tangent space at $o=eK_\Theta\in\mathbb{F}_\Theta$ is identified with $\mathfrak{m}_\Theta$ and there exists a one-to-one between $G-$invariant tensors on $\mathbb{F}_\Theta$ and tensors on $T_o\mathbb{F}_\Theta\approx\mathfrak{m}_\Theta$ which are invariant with respect to the isotropy representation of $K_\Theta.$ If $H_\alpha$, $\alpha\in\Sigma$ and $X_\alpha\in\mathfrak{g}_\alpha,$ $\alpha\in\Pi$ is Weyl basis for $\mathfrak{g}$, we identify $\mathfrak{n}_\Theta^-$ with $\mathfrak{m}_\Theta$ via
\begin{center}
$X_\alpha\longmapsto X_\alpha-X_{-\alpha},$ $\alpha\in\Pi^-\setminus\langle\Theta\rangle^-.$
\end{center}
The $\mathfrak{z}_\Theta-$invariant subspaces $V^{\sigma}_\Theta$ are $K_\Theta-$invariant but not necessarily $K_\Theta-$irreducible. The $K_\Theta-$inva- riant subspaces of each $V^{\sigma}_\Theta$ and their equivalences by the adjoint representation of $K_\Theta$ are completely described in \cite{PSM}.

\section{Invariant Metrics on Real Flag Manifolds}
It is known that there exists a one-to-one correspondence between $K-$invariant metrics on $\mathbb{F}_\Theta=K/K_\Theta$ and $\Ad(K_\Theta)-$invariant inner products $g$ on $\mathfrak{m}_\Theta$, that is

\begin{equation}
g(\Ad(k)X,\Ad(k)Y)=g(X,Y) \text{ for all } k\in K_\Theta,\ X,Y\in\mathfrak{m}_\Theta.
\end{equation}

We fix an $\Ad(K)-$invariant inner product $(\cdot,\cdot)$ on $\mathfrak{k}$ such that the reductive decomposition $\mathfrak{k}=\mathfrak{k}_\Theta\oplus\mathfrak{m}_\Theta$ is $(\cdot,\cdot)-$orthogonal. Given another $\Ad(K_\Theta)-$invariant inner product $g$, there exists a unique $(\cdot,\cdot)-$self-adjoint, positive operator $A:\mathfrak{m}_\Theta\longrightarrow\mathfrak{m}_\Theta$ commuting with $\Ad(k)$ for all $k\in K_\Theta$ such that

\begin{equation}
g(X,Y)=(AX,Y) \text{ for all } X,Y\in\mathfrak{m}_\Theta.
\end{equation}

Any $\Ad(K_\Theta)-$invariant inner product is determined by such an operator. We call $A$ the \textit{metric operator} corresponding to $g.$ We will make no distinction between $g$ and its metric operator $A.$ Since $K_\Theta$ is compact, the adjoint representation of $K_\Theta$ in $\mathfrak{m}_\Theta$ induces a $(\cdot,\cdot)-$orthogonal splitting

\begin{equation}\label{3}
\mathfrak{m}_\Theta=\bigoplus\limits_{i=1}^s\mathfrak{m}_i
\end{equation}

of $\mathfrak{m}_\Theta$ into $K_\Theta-$invariant, irreducible submodules $\mathfrak{m}_i$, $i=1,...,s.$  We say that the submodules $\mathfrak{m}_i$ and $\mathfrak{m}_j$ are \textit{equivalent} if there exists  an $\Ad(K_\Theta)-$equivariant isomorphism $T:\mathfrak{m}_i\longrightarrow\mathfrak{m}_j$, that is, $\left.\Ad(k)\right|_{\mathfrak{m}_j}\circ T=T\circ \left.\Ad(k)\right|_{\mathfrak{m}_i}$, for all $k\in K_\Theta.$ Evidently, this equivalence relation induces a partition 

\begin{center}
$\{\mathfrak{m}_1,...,\mathfrak{m}_s\}=C_1\cup ...\cup C_S \text{ with } S\leq s,$
\end{center}

therefore, we have a new $(\cdot,\cdot)-$orthogonal splitting

\begin{equation}\label{4}
\mathfrak{m}_\Theta=\bigoplus\limits_{i=1}^SM_i
\end{equation}

where

\begin{center}
$M_i=\displaystyle\bigoplus\limits_{\mathfrak{m}_j\in C_i}\mathfrak{m}_j,$ \ \  $i=1,...,S.$
\end{center}

We call each $M_i$ an \textit{isotypical summand} of the decomposition \eqref{3}. We consider a $(\cdot,\cdot)-$orthogonal ordered basis $\mathcal{B}=\mathcal{B}_1\cup...\cup\mathcal{B}_S$ adapted to the decomposition \eqref{4} such that for every $i$, all vectors in $\mathcal{B}_i$ have the same norm with respect to $(\cdot,\cdot)$. Then, any metric operator $A$ can be written in the basis $\mathcal{B}$ as a block-diagonal matrix of the form 

\begin{equation}\label{5}
[A]_{\mathcal{B}}=\left(\begin{array}{cccc}
[A|_{M_1}]_{\mathcal{B}_1} & 0 & \dots & 0\\
0 & [A|_{M_2}]_{\mathcal{B}_2} & \dots & 0\\
\vdots & \vdots & \ddots & \vdots\\
0 & 0 & \dots & [A|_{M_S}]_{\mathcal{B}_S}\\
\end{array}\right).
\end{equation}

If $M_i=\mathfrak{m}_{i_1}\oplus...\oplus\mathfrak{m}_{i_n}$, then $[A|_{M_i}]_{\mathcal{B}_i}$ has the form

\begin{equation}\label{6}
[A|_{M_i}]_{\mathcal{B}_i}=\left(\begin{array}{cccc}
\mu_1 \text{I}_{\mathfrak{m}_{i_1}} & B_{21}^T & \dots & B_{n1}^T\\
B_{21} & \mu_2\text{I}_{\mathfrak{m}_{i_2}} & \dots & B_{n2}^T\\
\vdots & \vdots & \ddots & \vdots\\
B_{n1} & B_{n2} & \dots & \mu_n\text{I}_{\mathfrak{m}_{i_n}}\\
\end{array}\right)
\end{equation}

where $B_{ab}^T$ represents the transpose of $B_{ab}$ and $\mu_1,...,\mu_n>0.$ We shall use the facts above and the results in \cite{PSM} about the isotropy representation on real flag manifolds to obtain the invariant metrics in these manifolds.\\

\subsection{Flags of $A_l,$ $l\geq 1$} We use the standard realization of $A_l$ where the positive roots are $\alpha_{ij}=\lambda_i-\lambda_{j},$ $1\leq i<j\leq l+1$ and the simple roots are $\alpha_i=\alpha_{i,i+1},$ $i=1,...,l.$ The Lie algebra $\mathfrak{k}$ is the set $\mathfrak{so}(l+1)$ of skew-symmetric real matrices of order $l+1.$ We fix $(\cdot,\cdot)=-\langle\cdot,\cdot\rangle$, where $\langle\cdot,\cdot\rangle$ is the Killing form of $\mathfrak{so}(l+1).$ Let $\mathfrak{m}_{ij}$ be the subspace $\vspan\{w_{ij}=E_{ij}-E_{ji}\},$ where $E_{ij}$ is the real $(l+1)\times(l+1)$ matrix with value equal to 1 in the $(i,j)-$entry and zero elsewhere. The set $\{w_{ij}:1\leq j<i\leq l+1\}$ is an $(\cdot,\cdot)-$orthogonal basis for $\mathfrak{so}(l+1).$ For every $\Theta\subseteq\Sigma$, there exist positive integers $l_1,...,l_r$ such that $l+1=l_1+...+l_r,$ and if we set $\tilde{l_0}=0,$ $\tilde{l_i}=\tilde{l}_{i-1}+l_i,$ $i=1,...,r,$ then $\Theta$ is written as the union of its connected components as

\begin{equation}\label{7}
\displaystyle\Theta=\bigcup\limits_{l_i>1}\left\{\alpha_{\tilde{l}_{i-1}+1},...,\alpha_{\tilde{l}_i-1}\right\}.
\end{equation}

By writing $\Theta$ in this form, we have that $K_\Theta=S(O(l_1)\times...\times O(l_r)).$ The following proposition was proved by Patro and San Martin in \cite{PSM}.\\

\begin{prop}\label{2.1}\text{(\cite{PSM})}
For any flag manifold $\mathbb{F}_\Theta$ of $A_l,$ $l\neq 3,$ the $K_\Theta-$invariant irreducible subspaces are

\begin{equation}\label{8}
M_{mn}=\displaystyle\bigoplus\limits_{\begin{subarray}{c} 1\leq i\leq l_m\\
1\leq j\leq l_n\end{subarray}}\mathfrak{m}_{\tilde{l}_{m-1}+i,\tilde{l}_{n-1}+j}, \textstyle \ 1\leq n<m\leq r.
\end{equation}

Two such subspaces are not equivalent.\hfill $\qed$
\end{prop}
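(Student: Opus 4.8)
The plan is to work concretely inside $\mathfrak{so}(l+1)$ with $K_\Theta = S(O(l_1)\times\cdots\times O(l_r))$ acting by conjugation, and to analyze the action block by block. First I would fix the block decomposition of $\{1,\dots,l+1\}$ into consecutive intervals $I_m = \{\tilde l_{m-1}+1,\dots,\tilde l_m\}$ of sizes $l_m$, so that a matrix in $\mathfrak{so}(l+1)$ decomposes into blocks $X_{mn}$ indexed by pairs $(m,n)$. The subalgebra $\mathfrak{k}_\Theta$ consists of the block-diagonal skew-symmetric matrices (with the determinant-one condition, which is irrelevant at the Lie-algebra level since $\mathfrak{so}$ has no trace), and its orthogonal complement $\mathfrak{m}_\Theta$ is precisely the span of the off-diagonal positions, i.e. $\mathfrak{m}_\Theta = \bigoplus_{n<m} M_{mn}$ with $M_{mn}$ as in \eqref{8}. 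I would note that under the identification $w_{ij}=E_{ij}-E_{ji}\mapsto$ the $(i,j)$-entry pair, the space $M_{mn}$ is naturally identified with the space of $l_m\times l_n$ real matrices, $M_{mn}\cong \mathrm{Mat}_{l_m\times l_n}(\mathbb{R})$.

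Next I would identify the $K_\Theta$-action on each $M_{mn}$. For $g=\mathrm{diag}(g_1,\dots,g_r)\in S(O(l_1)\times\cdots\times O(l_r))$, conjugation sends the $(m,n)$-block $X_{mn}$ to $g_m X_{mn} g_n^{-1} = g_m X_{mn} g_n^T$. So $M_{mn}\cong \mathrm{Mat}_{l_m\times l_n}(\mathbb{R})$ carries the external tensor product representation $\rho_m\boxtimes\rho_n^*$ of $O(l_m)\times O(l_n)$, where $\rho_k$ is the standard representation of $O(l_k)$ (and $\rho_k^*\cong\rho_k$ since orthogonal). The key facts I would invoke are: (i) the standard representation of $O(l_k)$ on $\mathbb{R}^{l_k}$ is irreducible (this holds for all $l_k\ge 1$ over $\mathbb{R}$ — even $\mathbb{R}^1$ with $O(1)=\{\pm1\}$ is irreducible), and (ii) the external tensor product of real irreducible representations whose endomorphism algebras are both $\mathbb{R}$ is again irreducible with endomorphism algebra $\mathbb{R}$. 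For the standard representation of $O(l_k)$ with $l_k\ge 1$ the commutant is $\mathbb{R}$ (for $l_k=1$ trivially; for $l_k\ge 2$ because $SO(l_k)$ already acts irreducibly with real type). Hence each $M_{mn}$ is $K_\Theta$-irreducible, establishing the first claim of the proposition. The only subtlety is $l=3$: there $l_1=l_2=l_3=l_4=1$ is excluded precisely because then $K_\Theta$ is discrete and the relevant representation $\mathbb{R}\boxtimes\mathbb{R}$ analysis interacts with a low-dimensional coincidence — which is why the proposition excludes $l\ne 3$; I would just remark that this edge case is handled separately in \cite{PSM}.

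For the non-equivalence statement, I would compare $M_{mn}$ and $M_{m'n'}$ with $(m,n)\ne(m',n')$, $n<m$, $n'<m'$. An $\Ad(K_\Theta)$-equivariant isomorphism would in particular be equivariant for the diagonal torus/center and for each factor $O(l_k)$ acting trivially on blocks not involving index $k$. If $\{m,n\}\ne\{m',n'\}$ as sets, pick an index $k$ in the symmetric difference, say $k\in\{m,n\}\setminus\{m',n'\}$: then $O(l_k)$ acts nontrivially (through the standard representation) on $M_{mn}$ but trivially on $M_{m',n'}$, so no nonzero equivariant map exists. The remaining case is $\{m,n\}=\{m',n'\}$ with the pairs distinct as ordered pairs, i.e. $M_{mn}$ versus $M_{nm}$ — but $M_{nm}$ does not appear in our list since we require $n<m$, so this case does not arise; if one wanted the stronger statement that $\mathrm{Mat}_{l_m\times l_n}$ and $\mathrm{Mat}_{l_n\times l_m}$ are inequivalent when $l_m\ne l_n$ it follows from dimension count, and when $l_m=l_n$ they are literally the same position so there is nothing to prove. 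The main obstacle, such as it is, is being careful about the $O(1)$ factors: I expect the bookkeeping for blocks of size $1$ (where $O(1)=\mathbb{Z}/2$ contributes only a sign, and $SO(1)$ is trivial) to be where one must check irreducibility by hand rather than citing a general $SO(n)$ statement, and this is exactly the source of the $l\ne 3$ hypothesis. Throughout, I would lean on Proposition \ref{2.1}'s provenance in \cite{PSM} to avoid re-deriving the finer $K_\Theta$-module structure, and present the above as the conceptual skeleton.
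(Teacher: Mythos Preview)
The paper does not give its own proof of this proposition; it is quoted from \cite{PSM} with an immediate $\qed$. So there is no paper proof to compare against, and your sketch must stand on its own.

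Your skeleton is the right one, but there is a genuine gap: you repeatedly work with the full product $O(l_1)\times\cdots\times O(l_r)$ and then wave away the determinant-one constraint as ``irrelevant at the Lie-algebra level.'' That is not valid here, because $K_\Theta=S(O(l_1)\times\cdots\times O(l_r))$ is disconnected and $K_\Theta$-invariance is a statement about the \emph{group}, not just its Lie algebra. Irreducibility for the bigger group does not descend for free to an index-two subgroup. Concretely, your claim that ``$SO(l_k)$ already acts irreducibly with real type'' is false for $l_k=2$: the standard $SO(2)$-module $\mathbb{R}^2$ has commutant $\mathbb{C}$, not $\mathbb{R}$. And this is not a harmless edge case: for $l=3$ with $\Theta=\{\alpha_1,\alpha_3\}$ (so $l_1=l_2=2$) the block $M_{21}\cong\mathrm{Mat}_{2\times 2}(\mathbb{R})$ is irreducible under $O(2)\times O(2)$ but \emph{splits} into two $2$-dimensional pieces under $K_\Theta=S(O(2)\times O(2))$, exactly as the paper records. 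So the hypothesis $l\neq 3$ is doing real work in the irreducibility statement, not only in the non-equivalence statement, and your argument as written would incorrectly prove irreducibility in that excluded case.

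The non-equivalence argument has the same defect. You want to use a single factor $O(l_k)$ to separate $M_{mn}$ from $M_{m'n'}$, but $K_\Theta$ does not contain $O(l_k)\times\{1\}^{r-1}$. When all $l_k=1$ (so $\Theta=\emptyset$), $K_\Theta$ is the group of diagonal sign matrices with product $1$, and the character on $\mathfrak{m}_{ij}$ is $\epsilon_i\epsilon_j$. Two such characters agree on $K_\Theta$ precisely when $\epsilon_i\epsilon_j\epsilon_{i'}\epsilon_{j'}$ equals either the trivial character or the full product $\prod_k\epsilon_k$; the latter forces $\{i,j,i',j'\}=\{1,\dots,l+1\}$, i.e.\ $l+1=4$. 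This is the actual mechanism behind the $l\neq 3$ hypothesis, and your argument never engages with it. To repair the sketch you must argue irreducibility and inequivalence for $K_\Theta$ itself, tracking which reflections survive the determinant constraint; that analysis, carried out in \cite{PSM}, is where the low-rank coincidences are isolated.
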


\begin{cor}\label{2.2}
Let $\mathbb{F}_\Theta$ a flag of $A_l$, $l\neq 3.$ For $\Theta$ as in \eqref{7}, every  invariant metric $A$ is determined by $\frac{r(r-1)}{2}$ positive numbers $\mu_{mn},$ $1\leq n<m\leq r,$ such that 

\begin{equation}\label{9}
A|_{M_{mn}}=\mu_{mn}\text{I}_{M_{mn}},\ 1\leq n<m\leq r.
\end{equation}
\begin{flushright}
$\qed$
\end{flushright}
\end{cor}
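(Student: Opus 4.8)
The plan is to read off the structure of a general invariant metric operator from the block-diagonal form \eqref{5}--\eqref{6} together with the complete list of irreducible summands supplied by Proposition \ref{2.1}. First I would invoke Proposition \ref{2.1}: the decomposition \eqref{3} of $\mathfrak{m}_\Theta$ into $K_\Theta$-irreducible submodules is precisely $\mathfrak{m}_\Theta=\bigoplus_{1\le n<m\le r} M_{mn}$, so there are exactly $\binom{r}{2}=\frac{r(r-1)}{2}$ irreducible pieces, indexed by the pairs $(m,n)$ with $1\le n<m\le r$. Next I would use the second assertion of Proposition \ref{2.1}, that no two of these submodules are equivalent: this means that in the partition $\{\mathfrak{m}_1,\dots,\mathfrak{m}_s\}=C_1\cup\cdots\cup C_S$ by equivalence classes we have $S=s=\frac{r(r-1)}{2}$, each isotypical summand $M_i$ of \eqref{4} being a single irreducible $M_{mn}$.

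With $S=s$, the general metric operator \eqref{5} is block-diagonal with one block per $M_{mn}$, and each block \eqref{6} has only the trivial size $n=1$, i.e. $[A|_{M_{mn}}]_{\mathcal{B}}=\mu_{mn}\,\mathrm{I}_{M_{mn}}$ for some $\mu_{mn}>0$. Equivalently, one can argue directly by Schur's lemma: an $\Ad(K_\Theta)$-equivariant self-adjoint operator on a real irreducible module $M_{mn}$ whose commutant is $\mathbb{R}$ must be a positive multiple of the identity, and since there are no equivariant maps between inequivalent summands, $A$ has no off-diagonal blocks. Hence $A$ is determined by the tuple $(\mu_{mn})_{1\le n<m\le r}$ of $\frac{r(r-1)}{2}$ positive reals, which is exactly \eqref{9}. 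Conversely any such tuple of positive numbers yields, via \eqref{5}, a positive $(\cdot,\cdot)$-self-adjoint operator commuting with $\Ad(K_\Theta)$, hence an invariant metric.

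There is essentially no obstacle here: the corollary is a direct specialization of the general description \eqref{5}--\eqref{6} once Proposition \ref{2.1} guarantees that all irreducible summands are pairwise inequivalent (so every block is $1\times 1$). The only mild subtlety worth a sentence is that the commutant of each real irreducible $M_{mn}$ is $\mathbb{R}$ rather than $\mathbb{C}$ or $\mathbb{H}$ — this is implicit in \cite{PSM} and in the shape of \eqref{6}, where the diagonal entries are genuine scalars $\mu_i\,\mathrm{I}$ — so that each diagonal block is a single positive scalar. The hypothesis $l\ne 3$ is inherited verbatim from Proposition \ref{2.1}, where the exceptional low-rank coincidences in $A_3$ force a separate treatment. \hfill$\qed$
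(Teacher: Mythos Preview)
Your proposal is correct and matches the paper's approach exactly: the paper gives no proof beyond the $\qed$ symbol, treating the corollary as an immediate consequence of Proposition~\ref{2.1} (pairwise inequivalent irreducibles) together with the general block form \eqref{5}--\eqref{6}, which is precisely what you spell out. Your aside about the commutant being $\mathbb{R}$ is harmless but unnecessary: since $A$ is $(\cdot,\cdot)$-self-adjoint, Schur's lemma already forces $A|_{M_{mn}}$ to be a real scalar on each irreducible regardless of whether the commutant is $\mathbb{R}$, $\mathbb{C}$, or $\mathbb{H}$.
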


Next, we describe the $K_\Theta-$invariant, irreducible subspaces and their equivalences for each flag of $A_3$. For more details, see \cite{PSM}.

$\bullet$ $\Theta=\emptyset,$ $\mathbb{F}_{\emptyset}=SO(4)/S(O(1)\times O(1)\times O(1)\times O(1)).$

In this case, $\mathfrak{k}_\emptyset=\{0\}$ and the tangent space $\mathfrak{m}_{\emptyset}$ has a decomposition

\begin{center}
$\mathfrak{m}_{\emptyset}=\mathfrak{m}_{21}\oplus\mathfrak{m}_{43}\oplus\mathfrak{m}_{31}\oplus\mathfrak{m}_{42}\oplus\mathfrak{m}_{32}\oplus\mathfrak{m}_{41}$
\end{center}

where all the $\mathfrak{m}_{ij},$ $1\leq j<i\leq 4$ are $K_\emptyset-$invariant, irreducible and

\begin{center}
$\begin{array}{ccc}
M_1 & = & \mathfrak{m}_{21}\oplus\mathfrak{m}_{43},\\
M_2 & = & \mathfrak{m}_{31}\oplus\mathfrak{m}_{42},\\
M_3 & = & \mathfrak{m}_{32}\oplus\mathfrak{m}_{41}
\end{array}$
\end{center}

are the corresponding isotypical summands.

$\bullet$ $\Theta=\{\alpha_1\},$ $\mathbb{F}_{\{\alpha_1\}}=SO(4)/S(O(2)\times O(1)\times O(1)).$

In this case, $\mathfrak{k}_{\{\alpha_1\}}=\mathfrak{m}_{21}$ and

\begin{center}
$\mathfrak{m}_{\{\alpha_1\}}=\mathfrak{m}_{43}\oplus(\mathfrak{m}_{31}\oplus\mathfrak{m}_{32})\oplus(\mathfrak{m}_{42}\oplus\mathfrak{m}_{41})$
\end{center}

where $\mathfrak{m}_{43}$, $\mathfrak{m}_{31}\oplus\mathfrak{m}_{32}$ and $\mathfrak{m}_{42}\oplus\mathfrak{m}_{41}$ are $K_{\{\alpha_1\}}-$invariant, irreducible and

\begin{center}
$\begin{array}{ccl}
M_1 & = & \mathfrak{m}_{43},\\ 
M_2 & = & (\mathfrak{m}_{31}\oplus\mathfrak{m}_{32})\oplus(\mathfrak{m}_{42}\oplus\mathfrak{m}_{41})
\end{array}$ 
\end{center}

are the corresponding isotypical summands.

$\bullet$ $\Theta=\{\alpha_2\},$ $\mathbb{F}_{\{\alpha_2\}}=SO(4)/S(O(1)\times O(2)\times O(1)).$

We have $\mathfrak{k}_{\{\alpha_2\}}=\mathfrak{m}_{32}$ and

\begin{center}
$\mathfrak{m}_{\{\alpha_2\}}=\mathfrak{m}_{41}\oplus(\mathfrak{m}_{21}\oplus\mathfrak{m}_{31})\oplus(\mathfrak{m}_{43}\oplus\mathfrak{m}_{42})$
\end{center}

where $\mathfrak{m}_{21}$, $\mathfrak{m}_{31}\oplus\mathfrak{m}_{41}$ and $\mathfrak{m}_{42}\oplus\mathfrak{m}_{32}$ are $K_{\{\alpha_2\}}-$invariant, irreducible and

\begin{center}
$\begin{array}{ccl}
M_1 & = & \mathfrak{m}_{41},\\ 
M_2 & = & (\mathfrak{m}_{21}\oplus\mathfrak{m}_{31})\oplus(\mathfrak{m}_{43}\oplus\mathfrak{m}_{42})
\end{array}$ 
\end{center}

are the corresponding isotypical summands.

$\bullet$ $\Theta=\{\alpha_3\},$ $\mathbb{F}_{\{\alpha_3\}}=SO(4)/S(O(1)\times O(1)\times O(2)).$

In this case, $\mathfrak{k}_{\{\alpha_3\}}=\mathfrak{m}_{43}$ and

\begin{center}
$\mathfrak{m}_{\{\alpha_3\}}=\mathfrak{m}_{21}\oplus(\mathfrak{m}_{31}\oplus\mathfrak{m}_{41})\oplus(\mathfrak{m}_{42}\oplus\mathfrak{m}_{32})$
\end{center}

where $\mathfrak{m}_{41}$, $\mathfrak{m}_{21}\oplus\mathfrak{m}_{31}$ and $\mathfrak{m}_{43}\oplus\mathfrak{m}_{42}$ are $K_{\{\alpha_3\}}-$invariant, irreducible and

\begin{center}
$\begin{array}{ccl}
M_1 & = & \mathfrak{m}_{41},\\ 
M_2 & = & (\mathfrak{m}_{21}\oplus\mathfrak{m}_{31})\oplus(\mathfrak{m}_{43}\oplus\mathfrak{m}_{42})
\end{array}$ 
\end{center}

are the corresponding isotypical summands.

$\bullet$ $\Theta=\{\alpha_1,\alpha_2\}$ or $\{\alpha_2,\alpha_3\},$ $\mathbb{F}_\Theta=SO(4)/S(O(3)\times O(1))$ or $SO(4)/S(O(1)\times O(3)),$ respectively.

For these sets, the adjoint representation of $K_\Theta$ on $\mathfrak{m}_\Theta$ is irreducible.

$\bullet$ $\Theta=\{\alpha_1,\alpha_3\},$ $\mathbb{F}_{\{\alpha_1,\alpha_3\}}=SO(4)/S(O(2)\times O(2)).$

We have $\mathfrak{k}_{\{\alpha_1,\alpha_3\}}=\mathfrak{m}_{21}\oplus\mathfrak{m}_{43}$, and $\mathfrak{m}_{\{\alpha_1,\alpha_3\}}=M_1\oplus M_2,$ where

\begin{center}
$M_1=\displaystyle\left\{\left(\begin{array}{cc}0 &-B\\ B & 0\end{array}\right):\ B \text{ has the form } B=\left(\begin{array}{cc}a &b\\ b & -a\end{array}\right),\ a,b\in\mathbb{R}\right\}$
\end{center}
and 
\begin{center}
$M_2=\displaystyle\left\{\left(\begin{array}{cc}0 &-B^T\\ B & 0\end{array}\right):\ B \text{ has the form } B=\left(\begin{array}{cc}a &-b\\ b & a\end{array}\right),\ a,b\in\mathbb{R}\right\}$
\end{center}

are not equivalent $K_{\{\alpha_1,\alpha_3\}}-$invariant, irreducible subspaces. \hfill $\qed$

For each $\Theta\subseteq\Sigma$, we fix an ordered $(\cdot,\cdot)-$orthogonal basis for $\mathfrak{m}_\Theta$:

\begin{equation}\label{10}
\begin{array}{lcl}
\mathcal{B}_{\emptyset} & = & \{w_{21},w_{43},w_{31},w_{42},w_{32},w_{41}\},\\
\mathcal{B}_{\{\alpha_1\}} & = & \{w_{43},w_{31},w_{32},w_{42},w_{41}\},\\
\mathcal{B}_{\{\alpha_2\}} & = & \{w_{41},w_{21},w_{31},w_{43},w_{42}\},\\
\mathcal{B}_{\{\alpha_3\}} & = & \{w_{21},w_{31},w_{41},w_{42},w_{32}\},\\
\mathcal{B}_{\{\alpha_1,\alpha_2\}} & = & \{w_{41},w_{42},w_{43}\},\\
\mathcal{B}_{\{\alpha_2,\alpha_3\}} & = & \{w_{21},w_{31},w_{41}\},\\
\mathcal{B}_{\{\alpha_1,\alpha_3\}} & = & \{w_{31}-w_{42},w_{41}+w_{32},w_{31}+w_{42},w_{41}-w_{32}\}.\\
\end{array}
\end{equation}

Now, we describe the invariant metrics for the flags of $A_3.$

\begin{prop}\label{2.3} Every invariant metric $A$ on a flag of $A_3$ is written in the above basis in the following form:

\ \ \ \ \ \ \ \ \ \ \ \ \ \ \ \ \ \ \ \ \ $[A]_{\mathcal{B}_{\emptyset}}=\left(\begin{array}{cccccc}
\mu^{(1)}_1 & b_1 & 0 & 0 & 0 & 0\\
b_1 & \mu^{(1)}_2 & 0 & 0 & 0 & 0\\
0 & 0 & \mu^{(2)}_1 & b_2 & 0 & 0\\
0 & 0 & b_2 & \mu^{(2)}_2 & 0 & 0\\
0 & 0 & 0 & 0 & \mu^{(3)}_1 & b_3\\
0 & 0 & 0 & 0 & b_3 & \mu^{(3)}_2\\
\end{array}\right),$

\

\ \ \ \ \ \ \ \ \ \ \ \ \ \ \ \ \ \ \ \ \ $[A]_{\mathcal{B}_{\Theta}}=\left(\begin{array}{ccccc}
\mu^{(1)}_1 & 0 & 0 & 0 & 0 \\
0 & \mu^{(2)}_1 & 0 & b & 0 \\
0 & 0 & \mu^{(2)}_1 & 0 & -b \\
0 & b & 0 & \mu^{(2)}_2 & 0 \\
0 & 0 & -b & 0 & \mu^{(2)}_2 \\
\end{array}\right),$ for $\Theta=\{\alpha_1\},\{\alpha_2\}$ or $\{\alpha_3\}$

\

\ \ \ \ \ \ \ \ \ \ \ \ \ \ \ \ \ \ \ \ \ $[A]_{\mathcal{B}_{\Theta}}=\left(\begin{array}{ccc}
\mu & 0 & 0 \\
0 & \mu & 0 \\
0 & 0 & \mu \\
\end{array}\right),$ for $\Theta=\{\alpha_1,\alpha_2\}$ or $\{\alpha_2,\alpha_3\}$

\

\ \ \ \ \ \ \ \ \ \ \ \ \ \ \ \ \ \ \ \ \ $[A]_{\mathcal{B}_{\{\alpha_1,\alpha_3\}}}=\left(\begin{array}{cccc}
\mu_1 & 0 & 0 & 0 \\
0 & \mu_1 & 0 & 0 \\
0 & 0 & \mu_2 & 0 \\
0 & 0 & 0 & \mu_2 \\
\end{array}\right),$

where the numbers $\mu^{(i)}_j,$ $\mu$ and $\mu_i$ are positive.
\end{prop}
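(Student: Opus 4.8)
The plan is to combine the general block form of the metric operator recorded in \eqref{5}--\eqref{6} with the explicit isotypical decompositions of $\mathfrak{m}_\Theta$ listed above (taken from \cite{PSM}), and then to determine, case by case, the remaining freedom in the off-diagonal blocks and the scalars. Throughout, $A$ denotes the metric operator: it is positive, $(\cdot,\cdot)$-self-adjoint, and commutes with $\Ad(k)$ for every $k\in K_\Theta$.

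First I would record two elementary facts. (i) If $N\subseteq\mathfrak{m}_\Theta$ is a $K_\Theta$-invariant irreducible subspace and $T\colon N\to N$ is positive, $(\cdot,\cdot)$-self-adjoint and commutes with the restricted action, then $T=\mu\,\Id_N$ with $\mu>0$, since each eigenspace of $T$ is $K_\Theta$-invariant, hence equals $N$. (ii) On an isotypical summand $M=\mathfrak{m}_{i_1}\oplus\cdots\oplus\mathfrak{m}_{i_n}$, write $A|_M=(A_{ab})$ with $A_{ab}\colon\mathfrak{m}_{i_b}\to\mathfrak{m}_{i_a}$; then commuting with $\Ad(K_\Theta)$ forces $A_{ab}\in\operatorname{Hom}_{K_\Theta}(\mathfrak{m}_{i_b},\mathfrak{m}_{i_a})$, self-adjointness forces $A_{ba}=A_{ab}^{*}$, and by (i) one has $A_{aa}=\mu_a\,\Id_{\mathfrak{m}_{i_a}}$, $\mu_a>0$. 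Since the vectors of $\mathcal{B}_i$ all have the same $(\cdot,\cdot)$-norm, the matrix of $A_{ab}^{*}$ is the transpose of that of $A_{ab}$; this is exactly the pattern \eqref{6}, and by \eqref{5} the operator $A$ is block-diagonal along the isotypical summands.

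It then remains to compute the intertwiner spaces between equivalent irreducible pieces for each $\Theta\subseteq\{\alpha_1,\alpha_2,\alpha_3\}$, using the decompositions quoted above. For $\Theta=\{\alpha_1,\alpha_2\}$ and $\Theta=\{\alpha_2,\alpha_3\}$ the module $\mathfrak{m}_\Theta$ is irreducible, so (i) gives $A=\mu\,\Id$. For $\Theta=\{\alpha_1,\alpha_3\}$ the summands $M_1,M_2$ are irreducible and inequivalent, so $A=\mu_1\,\Id_{M_1}\oplus\mu_2\,\Id_{M_2}$ by (i); a direct check shows $\{w_{31}-w_{42},\,w_{41}+w_{32}\}$ spans $M_1$ and $\{w_{31}+w_{42},\,w_{41}-w_{32}\}$ spans $M_2$, so in the basis $\mathcal{B}_{\{\alpha_1,\alpha_3\}}$ of \eqref{10} the matrix is $\diag(\mu_1,\mu_1,\mu_2,\mu_2)$. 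For $\Theta=\emptyset$ each isotypical summand $M_i$ is a sum of two $1$-dimensional equivalent modules, whose intertwiner space is $1$-dimensional, so $A|_{M_i}$ is a symmetric $2\times 2$ matrix $\begin{pmatrix}\mu^{(i)}_1 & b_i\\ b_i & \mu^{(i)}_2\end{pmatrix}$ in the corresponding pair of basis vectors; reassembling these in $\mathcal{B}_\emptyset$ gives the first matrix of the statement. Finally, for $\Theta=\{\alpha_1\}$ (and likewise for $\{\alpha_2\}$ and $\{\alpha_3\}$), $M_1=\mathfrak{m}_{43}$ is $1$-dimensional and contributes the entry $\mu^{(1)}_1$, while $M_2=P\oplus Q$ with $P=\mathfrak{m}_{31}\oplus\mathfrak{m}_{32}$ and $Q=\mathfrak{m}_{42}\oplus\mathfrak{m}_{41}$ two equivalent $2$-dimensional irreducible modules; one shows $\operatorname{Hom}_{K_\Theta}(P,Q)$ is $1$-dimensional, spanned by the map $w_{31}\mapsto w_{42}$, $w_{32}\mapsto -w_{41}$, so the off-diagonal block is $\diag(b,-b)$ and $A|_{M_2}$ is the displayed $4\times 4$ block. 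In every case the scalars $\mu$ are positive because $A$ is positive definite.

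I expect the case $\Theta=\{\alpha_1\},\{\alpha_2\},\{\alpha_3\}$ to be the main obstacle. There one must describe the action of $K_{\{\alpha_i\}}=S(O(2)\times O(1)\times O(1))$ on the $2$-dimensional modules $P$ and $Q$ explicitly — they are the standard representation of the $O(2)$-factor, twisted by the sign characters of the $O(1)$-factors and subject to the determinant-one constraint — and then argue that the reflections in the $O(2)$-factor shrink the commutant $\End_{K_\Theta}(P)$ from $\mathbb{C}$, which one obtains using rotations only, down to $\mathbb{R}$, so that $\operatorname{Hom}_{K_\Theta}(P,Q)$ is $1$-dimensional; exhibiting the spanning intertwiner in the bases of \eqref{10} is then a short matrix verification. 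Everything else is routine linear algebra built on \eqref{5}--\eqref{6} and the decompositions taken from \cite{PSM}.
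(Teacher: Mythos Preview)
Your proposal is correct and follows the same overall strategy as the paper: both start from the block form \eqref{5}--\eqref{6} determined by the isotypical decomposition, and then pin down the off-diagonal blocks using $\Ad(K_\Theta)$-equivariance. The difference is one of style rather than substance. You argue abstractly via Schur's lemma, identifying each off-diagonal block with an element of the intertwiner space $\operatorname{Hom}_{K_\Theta}(P,Q)$ and computing its dimension; the paper instead writes down the generic block with undetermined entries, chooses two specific elements $k\in K_{\{\alpha_1\}}$ (a rotation and a reflection in the $O(2)$-factor), and reads off the constraints directly from the vanishing of $[A]\,[\Ad(k)]-[\Ad(k)]\,[A]$. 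Your approach has the advantage of explaining \emph{why} the answer is what it is (the commutant of the real $2$-dimensional $O(2)$-module is $\mathbb{R}$, not $\mathbb{C}$), while the paper's computation is more self-contained and avoids having to justify the equivalence $P\cong Q$ or to track the sign characters through the determinant-one constraint. Either way, the core verification --- that the spanning intertwiner is $w_{31}\mapsto w_{42}$, $w_{32}\mapsto -w_{41}$ --- is the same short matrix check.
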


\begin{proof}
For $\Theta=\emptyset,$ $\{\alpha_1\},$ $\{\alpha_2\},$ $\{\alpha_3\},$ $\{\alpha_1,\alpha_2\}$ and $\{\alpha_2,\alpha_3\}$ is obvious from the description above of the isotypical summands. For $\Theta=\{\alpha_1\}$, we have $A$ is written in $\mathcal{B}_{\{\alpha_1\}}$ in the form

\begin{center}$[A]_{\mathcal{B}_{\{\alpha_1\}}}=\left(\begin{array}{ccccc}
\mu^{(1)}_1 & 0 & 0 & 0 & 0 \\
0 & \mu^{(2)}_1 & 0 & b & d \\
0 & 0 & \mu^{(2)}_1 & c & e \\
0 & b & c & \mu^{(2)}_2 & 0 \\
0 & d & e & 0 & \mu^{(2)}_2 \\
\end{array}\right).$
\end{center}

Given $k\in K_{\{\alpha_1\}}=S(O(2)\times O(1)\times O(1)),$ we have that $k$ has the form 

\begin{center}
$k=\left(\begin{array}{cccc}
r & s & 0 & 0 \\
t & u & 0 & 0 \\
0 & 0 & v & 0 \\
0 & 0 & 0 & z \\
\end{array}\right),$
\end{center}

where its columns are orthonormal and $\det(k)=1.$ It is easy to verify that 

\begin{center}$[\Ad(k)]_{\mathcal{B}_{\{\alpha_1\}}}=\left(\begin{array}{ccccc}
vz & 0 & 0 & 0 & 0 \\
0 & vr & vs & 0 & 0 \\
0 & vt & vu & 0 & 0 \\
0 & 0 & 0 & zu & zt \\
0 & 0 & 0 & zs & zr \\
\end{array}\right).$
\end{center}

Since $A$ commutes with $\Ad(k)$ for all $k\in K_{\{\alpha_1\}}$, then for $r=t=u=-s=\frac{1}{\sqrt{2}}$ and $v=z=1$ we have

\ \ \ \ \ \ \ \ \ \ \ \ \ \ \ \ \ \ \ \ \ \ \ \ \ \ \ \ \ \ \ \ \ \ \ \ \ \ \ $[A]_{\mathcal{B}_{\{\alpha_1\}}}[\Ad(k)]_{\mathcal{B}_{\{\alpha_1\}}}-[\Ad(k)]_{\mathcal{B}_{\{\alpha_1\}}}[A]_{\mathcal{B}_{\{\alpha_1\}}}=\textbf{0}$\\

\ \ \ \ \ \ \ \ \ \ \ \ \ \ \ \ \ \ \ \ \ \ \ \ \ \ \ \ \ \ \ \ $\Longleftrightarrow$ $\displaystyle\left(\begin{array}{ccccc}
0 & 0 & 0 & 0 & 0 \\
0 & 0 & 0 & -\frac{c-d}{\sqrt{2}} & -\frac{b+e}{\sqrt{2}} \\
0 & 0 & 0 & \frac{b+e}{\sqrt{2}} & -\frac{c-d}{\sqrt{2}} \\
0 & \frac{c-d}{\sqrt{2}} & \frac{b+e}{\sqrt{2}} & 0 & 0 \\
0 & -\frac{b+e}{\sqrt{2}} & \frac{c-d}{\sqrt{2}} & 0 & 0 \\
\end{array}\right)=\textbf{0}$

thus, $b=-e$ and $c=d.$ Taking $r=v=-u=-z=1$ and $t=s=0$ we have

\ \ \ \ \ \ \ \ \ \ \ \ \ \ \ \ \ \ \ \ \ \ \ \ \ \ \ \ \ \ \ \ \ \ \ \ \ \ \ $[A]_{\mathcal{B}_{\{\alpha_1\}}}[\Ad(k)]_{\mathcal{B}_{\{\alpha_1\}}}-[\Ad(k)]_{\mathcal{B}_{\{\alpha_1\}}}[A]_{\mathcal{B}_{\{\alpha_1\}}}=\textbf{0}$\\

\ \ \ \ \ \ \ \ \ \ \ \ \ \ \ \ \ \ \ \ \ \ \ \ \ \ \ \ \ \ \ \ $\Longleftrightarrow$ $\displaystyle\left(\begin{array}{ccccc}
0 & 0 & 0 & 0 & 0 \\
0 & 0 & 0 & 0 & -2d \\
0 & 0 & 0 & 2c & 0 \\
0 & 0 & -2c & 0 & 0 \\
0 & 2d & 0 & 0 & 0 \\
\end{array}\right)=\textbf{0}$

concluding that $c=d=0,$ as we wanted to prove. Analogously for $\Theta=\{\alpha_2\}$ and $\Theta=\{\alpha_3\}.$
\end{proof}

\subsection{Flags of $B_l,$ $l\geq 5$} The set of roots of the Lie algebra of type $B_l$ is described as follows:
\begin{itemize}
\item The long ones $\pm(\lambda_i-\lambda_j),$ $\pm(\lambda_i+\lambda_j),$ $1\leq i<j\leq l$ and
\item the short ones $\pm\lambda_i,$ $1\leq i\leq l,$
\end{itemize}

where

\begin{center}
$\begin{array}{rccc}
\lambda_i: & \left\{H=\left(\begin{array}{ccc}\textbf{0} & \textbf{0} & \textbf{0}\\ \textbf{0} & \Lambda & \textbf{0}\\ \textbf{0} & \textbf{0} & -\Lambda\end{array}\right):\Lambda=\diag(a_1,...,a_l)\right\} & \longrightarrow & \mathbb{R},
\end{array}$ $\lambda_i(H)=a_i,$ $i=1,...,l.$
\end{center}

The simple roots are $\alpha_i=\lambda_i-\lambda_{i+1},$ $1\leq i\leq l-1$ and $\alpha_l=\lambda_l.$ The subalgebra $\mathfrak{k}$ is the set of $(2l+1)\times(2l+1)$ skew-symmetric matrices

\begin{center}
$\left(\begin{array}{ccc}
0 & -a & -a \\
a^T & A & B \\
a^T & B & A
\end{array}\right)$, $A+A^T=B+B^T=\textbf{0}.$
\end{center}
It is isomorphic to $\mathfrak{so}(l+1)\oplus\mathfrak{so}(l).$ The isomorphism is provided by the decomposition
\begin{center}
$\left(\begin{array}{ccc}
0 & -a & -a \\
a^T & A & B \\
a^T & B & A
\end{array}\right)=\left(\begin{array}{ccc}
0 & -a & -a \\
a^T & \frac{(A+B)}{2} & \frac{(A+B)}{2} \\
a^T & \frac{(A+B)}{2} & \frac{(A+B)}{2}
\end{array}\right)+\left(\begin{array}{ccc}
0 & \textbf{0} & \textbf{0} \\
\textbf{0} & \frac{(A-B)}{2} & -\frac{(A-B)}{2} \\
\textbf{0} & -\frac{(A-B)}{2} & \frac{(A-B)}{2}
\end{array}\right).$
\end{center}

We fix the $\Ad(K)-$invariant inner product $(\cdot,\cdot)$ on $\mathfrak{k}$ defined by 

\begin{equation}\label{B1}
\left(\left(\begin{array}{ccc}
0 & -a & -a \\
a^T & A & B \\
a^T & B & A
\end{array}\right),\left(\begin{array}{ccc}
0 & -c & -c \\
c^T & C & D \\
c^T & D & C
\end{array}\right)\right)=\displaystyle ac^T-\frac{1}{2}(Tr(BD)+Tr(AC)).
\end{equation}

The matrices 
\begin{equation}\label{Special1}
\begin{array}{ll}
v_{k}=E_{1+k,1}-E_{1,1+k}+E_{1+l+k,1}-E_{1,1+l+k}, & 1\leq k \leq l, \\
w_{ij}=E_{1+i,1+j}-E_{1+j,1+i}+E_{1+l+i,1+l+j}-E_{1+l+j,1+l+i}, & \\
u_{ij}=E_{1+l+i,1+j}-E_{1+l+j,1+i}+E_{1+i,1+l+j}-E_{1+j,1+l+i}, & 1\leq j<i\leq l,\\
\end{array}
\end{equation}
where $E_{ij}$ is the matrix with value equal to 1 in the $(i,j)-$entry and zero elsewhere, form  a $(\cdot,\cdot)-$orthonormal basis for $\mathfrak{k}.$

We take $r$ positive integers $l_1,...,l_r$ such that $l=l_1+...+l_r$ and if $\tilde{l}_{0}=0,\ \tilde{l}_i=\tilde{l}_{i-1}+l_i$ then 

\begin{equation}\label{B2}
\Theta=\bigcup\limits_{l_i>1}\{\alpha_{\tilde{l}_{i-1}+1},...,\alpha_{\tilde{l}_i-1}\} \text{ or } \bigcup\limits_{l_i>1}\{\alpha_{\tilde{l}_{i-1}+1},...,\alpha_{\tilde{l}_i-1}\}\cup\{\alpha_l\},
\end{equation}

In this case, it is more difficult to explicitly determine the subgroup $K_\Theta.$ Instead we have that

\begin{center}
$X=\left(\begin{array}{ccc}
0 & \textbf{0} & \textbf{0} \\
\textbf{0} & A & -A \\
\textbf{0} & -A & A
\end{array}\right)\in\mathfrak{so}(l)\subseteq\mathfrak{k}$ $\Longrightarrow$ $exp(X)=\left(\begin{array}{ccc}
1 & \textbf{0} & \textbf{0} \\
\textbf{0} & \frac{I+exp(2A)}{2} & \frac{I-exp(2A)}{2} \\
\textbf{0} & \frac{I-exp(2A)}{2} & \frac{I+exp(2A)}{2}
\end{array}\right)\in K$
\end{center}
and
\begin{center}
$Y=\left(\begin{array}{ccc}
0 & \textbf{0} & \textbf{0} \\
\textbf{0} & A & A \\
\textbf{0} & A & A
\end{array}\right)\in\mathfrak{so}(l+1)\subseteq\mathfrak{k}$ $\Longrightarrow$ $exp(Y)=\left(\begin{array}{ccc}
1 & \textbf{0} & \textbf{0} \\
\textbf{0} & \frac{I+exp(2A)}{2} & -\frac{I-exp(2A)}{2} \\
\textbf{0} & -\frac{I-exp(2A)}{2} & \frac{I+exp(2A)}{2}
\end{array}\right)\in K.$
\end{center}

Since $A$ is skew-symmetric, then $exp(2A)\in SO(l).$ Also, every $P\in SO(l)$ can be written as a product $\prod\limits_{i=1}^{n}exp(A_i)$ with $A_i$ skew-symmetric, so 

\begin{center}
$\left\{\left(\begin{array}{ccc}
1 & \textbf{0} & \textbf{0} \\
\textbf{0} & \frac{I+P}{2} & \pm\frac{I-P}{2} \\
\textbf{0} & \pm\frac{I-P}{2} & \frac{I+P}{2}
\end{array}\right):P\in SO(l)\right\}\subseteq K.$
\end{center}
Therefore
\begin{center}
$k=\left(\begin{array}{ccc}
1 & \textbf{0} & \textbf{0} \\
\textbf{0} & \frac{I+P}{2} & \frac{I-P}{2} \\
\textbf{0} & \frac{I-P}{2} & \frac{I+P}{2}
\end{array}\right)\cdot \left(\begin{array}{ccc}
1 & \textbf{0} & \textbf{0} \\
\textbf{0} & \frac{I+P}{2} & -\frac{I-P}{2} \\
\textbf{0} & -\frac{I-P}{2} & \frac{I+P}{2}
\end{array}\right)=\left(\begin{array}{ccc}
1 & \textbf{0} & \textbf{0} \\
\textbf{0} & \frac{P+Q}{2} & \frac{P-Q}{2} \\
\textbf{0} & \frac{P-Q}{2} & \frac{P+Q}{2}
\end{array}\right)\in K$
\end{center}
for $P,Q\in SO(l).$ If 

\begin{center}
$H_\Theta=\left(\begin{array}{ccc}
0 & \textbf{0} & \textbf{0} \\
\textbf{0} & \Lambda_\Theta & \textbf{0}\\
\textbf{0} & \textbf{0} & -\Lambda_\Theta
\end{array}\right)$
\end{center}

is characteristic for $\Theta$, then
\begin{center}
$kH_\Theta=H_\Theta k$ $\Longleftrightarrow$ $P\Lambda_\Theta=\Lambda_\Theta Q.$
\end{center}

In particular, for $P=Q$ we have

\begin{center}
$S(O(l_1)\times...\times O(l_r))\stackrel{\text{dif.}}{\approx}\left\{\left(\begin{array}{ccc}
1 & \textbf{0} & \textbf{0} \\
\textbf{0} & P & \textbf{0}\\
\textbf{0} & \textbf{0} & P
\end{array}\right):P\in SO(l),\ P\Lambda_\Theta=\Lambda_\Theta P\right\}\subseteq K_\Theta.$
\end{center}

\begin{prop}\label{2.4}(\cite{PSM}) Let $\mathbb{F}_\Theta$ be a flag manifold of $B_l,$ with $l\geq 5.$ Then the following subspaces are $K_\Theta-$invariant and irreducible:

$a)$\begin{center}
$V_i=\vspan\{v_{\tilde{l}_{i-1}+s}:1\leq s\leq l_i\},$ 
\end{center}

with $1\leq i\leq r$ if $\alpha_l\notin\Theta.$ All these subspaces are not equivalent.

$b)$\begin{center}
$W_{mn}=\displaystyle\bigoplus\limits_{\begin{subarray}{c} 1\leq i\leq l_m\\
1\leq j\leq l_n\end{subarray}}\vspan\{w_{\tilde{l}_{m-1}+i,\tilde{l}_{n-1}+j}\}$ and $U_{mn}=\displaystyle\bigoplus\limits_{\begin{subarray}{c} 1\leq i\leq l_m\\
1\leq j\leq l_n\end{subarray}}\vspan\{u_{\tilde{l}_{m-1}+i,\tilde{l}_{n-1}+j}\},$
\end{center}

with $1\leq n<m\leq r$ if $\alpha_l\not\in\Theta$ and $1\leq n<m\leq r-1$ if $\alpha_l\in\Theta.$ For each $(m,n)$, $W_{mn}$ and $U_{mn}$ are equivalent. We denote by $M_{mn}=W_{mn}\oplus U_{mn}.$ 

$c)$\begin{center}
$U_i=\vspan\{u_{\tilde{l}_{i-1}+s,\tilde{l}_{i-1}+t}:1\leq t\leq s\leq l_i\}$
\end{center}
for $i$ such that $l_i>1$ and $1\leq i\leq r$ if $\alpha_l\notin\Theta$ and $1\leq i\leq r-1$ if $\alpha_l\in\Theta.$ All these subspaces are not equivalent.

$d)$\begin{center}
$\begin{array}{ccl}
(V_i)_1 & = & \vspan\{w_{\tilde{l}_{r-1}+s,\tilde{l}_{i-1}+t}-u_{\tilde{l}_{r-1}+s,\tilde{l}_{i-1}+t}:1\leq s \leq l_r, 1\leq t\leq l_i\}\\
\\
(V_i)_2 & = & \vspan\{v_{\tilde{l}_{i-1}+s},\ w_{\tilde{l}_{r-1}+s,\tilde{l}_{r-1}+t}+u_{\tilde{l}_{r-1}+s,\tilde{l}_{r-1}+t}:1\leq s \leq l_r, 1\leq t\leq l_i\}\\
\end{array}$
\end{center}
with $1\leq i\leq r-1$ when $\alpha_l\in\Theta.$ All these subspaces are not equivalent. \hfill $\qed$
\end{prop}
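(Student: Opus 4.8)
The plan is to verify the two claims — invariance and irreducibility — for each of the four families $(a)$–$(d)$ separately, following the structure already established in Section~\ref{prelim}. Recall from Section~\ref{prelim} that the $\mathfrak{z}_\Theta$-invariant subspaces $V_\Theta^\sigma$ are automatically $K_\Theta$-invariant; so the first task is to identify which $\mathfrak{z}_\Theta$-irreducible pieces the listed subspaces come from, and then to decide whether $K_\Theta$ (which is larger than the identity component of the centralizer) keeps them irreducible or further decomposes them. The key structural input is the explicit description of $K_\Theta$ obtained just above the statement: modulo the subtlety with $\alpha_l$, the group $K_\Theta$ contains the copy of $S(O(l_1)\times\cdots\times O(l_r))$ acting by simultaneous conjugation $P\mapsto \mathrm{diag}(1,P,P)$, together with the extra elements coming from pairs $(P,Q)$ with $P\Lambda_\Theta=\Lambda_\Theta Q$ when $\alpha_l\in\Theta$. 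So concretely I would compute $\Ad(k)$ on each of the basis vectors $v_k$, $w_{ij}$, $u_{ij}$ of \eqref{Special1} for such $k$, exactly as was done for $A_3$ in the proof of Proposition~\ref{2.3}.

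For part $(a)$: the vectors $v_{\tilde l_{i-1}+s}$, $1\le s\le l_i$, span a space on which the block $P_i\in O(l_i)$ acts by its standard representation (through the first index $1+k$), while the other blocks act trivially; since the standard representation of $O(l_i)$ on $\mathbb{R}^{l_i}$ is irreducible (here $l_i\ge 1$), each $V_i$ is $K_\Theta$-irreducible, and two of them carry inequivalent $K_\Theta$-actions because they are supported on different blocks (the $\Ad(K_\Theta)$-equivariant Hom is zero since a generic diagonal element of $K_\Theta$ acts with different eigenvalues on $V_i$ and $V_j$). For part $(b)$: $W_{mn}$ (resp. $U_{mn}$) carries the tensor product of the standard representations of $O(l_m)$ and $O(l_n)$ — irreducible as a representation of $O(l_m)\times O(l_n)$ — so each is $K_\Theta$-irreducible, and the $K_\Theta$-equivariant map $w_{\tilde l_{m-1}+i,\tilde l_{n-1}+j}\mapsto u_{\tilde l_{m-1}+i,\tilde l_{n-1}+j}$ gives the asserted equivalence $W_{mn}\cong U_{mn}$ (one checks it intertwines the $\Ad(k)$-action using the formulas for how conjugation permutes the two "copies" in \eqref{Special1}). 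For part $(c)$: the $u$-vectors with both indices in block $i$ realize the symmetric square $\mathrm{Sym}^2(\mathbb{R}^{l_i})$ of the standard $O(l_i)$-representation; this is irreducible for $O(l_i)$ precisely because the trace line is \emph{not} $O(l_i)$-invariantly split off — it is, but for the \emph{special} orthogonal group one must be careful — here the relevant fact from \cite{PSM} is that $U_i$ as defined (with $1\le t\le s\le l_i$, i.e. including the diagonal) is irreducible for the $O(l_i)$-action; inequivalence across different $i$ is again by support. Part $(d)$ is the genuinely new phenomenon: when $\alpha_l\in\Theta$ the last block "splits" because of the $P\ne Q$ elements, and one has to check that the $\pm$-combinations $w\mp u$ diagonalize this extra $\mathbb{Z}_2$-type action, yielding $(V_i)_1$ and $(V_i)_2$ as the two eigenspaces; irreducibility of each then follows from irreducibility of the corresponding $O(l_r)\times O(l_i)$-tensor pieces, and inequivalence from the different eigenvalue of the extra generator.

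The main obstacle I expect is part $(d)$, and more precisely pinning down exactly how $K_\Theta$ acts when $\alpha_l\in\Theta$: the subgroup $K_\Theta$ is \emph{not} simply $S(O(l_1)\times\cdots\times O(l_r))$ in that case — the condition $P\Lambda_\Theta=\Lambda_\Theta Q$ allows the last block to be acted on by a pair $(P_r,Q_r)$ rather than a single orthogonal matrix, effectively because $\alpha_l=\lambda_l$ is short and $K_\Theta$ acquires an extra $\mathbb{Z}_2$ (the sign of a reflection in the last coordinate of $\mathfrak{a}$). Disentangling this requires going back to the matrix model: writing a general $k\in K_\Theta$ in the block form $\mathrm{diag}(1,(P+Q)/2 \oplus \ldots)$ displayed above, restricting to the last $O(l_r)$-block, and tracking the induced action on the $w$- and $u$-basis vectors $w_{\tilde l_{r-1}+s,\tilde l_{i-1}+t}$, $u_{\tilde l_{r-1}+s,\tilde l_{i-1}+t}$. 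Once the two-eigenspace decomposition $(V_i)=(V_i)_1\oplus(V_i)_2$ is established, checking irreducibility of each summand and mutual inequivalence of all the subspaces in $(d)$ is routine Schur-lemma bookkeeping; but the explicit check that $(V_i)_1$ and $(V_i)_2$ are $K_\Theta$-stable — and in particular that $v_{\tilde l_{i-1}+s}$ really belongs with the $w+u$ combination and not the $w-u$ one — is the computation I would be most careful with. Throughout, I would lean on \cite{PSM} for the bookkeeping of which $O(l_i)$-representations appear and defer to it for the cases $l\le 4$ excluded here.
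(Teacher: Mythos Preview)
The paper does not give a proof of this proposition: it is quoted verbatim from \cite{PSM} (note the ``(\cite{PSM})'' tag and the bare $\qed$ immediately after the statement), so there is no ``paper's own proof'' to compare against. Your outline is therefore an independent attempt at what Patr\~ao and San~Martin do in their paper, and at the structural level --- identify $K_\Theta$ up to the block group $S(O(l_1)\times\cdots\times O(l_r))$, compute $\Ad(k)$ on the basis \eqref{Special1}, and read off the irreducible $O(l_m)\times O(l_n)$-pieces --- it is the right strategy.

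There is, however, a concrete error in your treatment of part $(c)$. In type $B_l$ the basis vectors $u_{ij}$ of \eqref{Special1} satisfy $u_{ji}=-u_{ij}$ (check the formula: swapping $i\leftrightarrow j$ flips the sign), so $u_{ii}=0$ and $U_i$ is \emph{not} $\mathrm{Sym}^2(\mathbb{R}^{l_i})$ but rather $\wedge^2(\mathbb{R}^{l_i})\cong\mathfrak{so}(l_i)$, the adjoint representation of $O(l_i)$. Your worry about the trace line is therefore misplaced (it simply doesn't appear), and the irreducibility of $U_i$ follows instead from irreducibility of the adjoint representation of $O(l_i)$, which holds for $l_i\geq 2$ and in particular for all $l_i$ occurring here since $l\geq 5$ rules out the exceptional $\mathfrak{so}(4)$ phenomenon in the relevant blocks. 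You may be conflating this with the $C_l$ case (Proposition~\ref{2.7}), where the analogous $u_{ij}$ are symmetric and the trace does split off --- that is why the $C_l$ subspaces $U_i$ are defined with the trace removed and a separate one-dimensional piece $V_i$ appears. Apart from this, your sketch of $(d)$ correctly identifies the crux: the extra component of $K_\Theta$ coming from $P\neq Q$ when $\alpha_l\in\Theta$, which acts as a sign on the last block and distinguishes the $w\pm u$ combinations.
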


\begin{prop}\label{2.5} Let $\Theta\subseteq\Sigma,$ and $l\geq 5.$

$a)$ If $\alpha_l\notin \Theta$ then 
\begin{equation}\label{B3}
\mathcal{B}_\Theta=\displaystyle\left(\bigcup\limits_{i=1}^r\mathcal{B}_{0,i}\right)\cup\left(\bigcup\limits_{1\leq n<m\leq r}\mathcal{B}_{mn}\right)\cup\left(\bigcup\limits_{l_i>1}\mathcal{B}_{i}\right)
\end{equation}

is a  $(\cdot,\cdot)-$orthonormal basis for $\mathfrak{m}_\Theta$ adapted to the subspaces of Proposition \ref{2.4}. Where
\begin{center}
$\mathcal{B}_{0,i}=\displaystyle\left\{v_{\tilde{l}_{i-1}+s}:1\leq s\leq r\right\},$
\end{center}

\begin{center}
$\mathcal{B}_{mn}=\{w_{\tilde{l}_{m-1}+s,\tilde{l}_{n-1}+t}:s=1,...,l_m,\ t=1,...,l_n\}\cup\{u_{\tilde{l}_{m-1}+s,\tilde{l}_{n-1}+t}:s=1,...,l_m,\ t=1,...,l_n\}$
\end{center} 

\begin{center}
$\mathcal{B}_{i}=\left\{u_{\tilde{l}_{i-1}+s,\tilde{l}_{i-1}+t}:1\leq t<s\leq l_i\right\}.$
\end{center} 
$b)$ If $\alpha_l\in\Theta$ then 
\begin{equation}\label{B4}
\mathcal{B}_\Theta=\displaystyle\left(\bigcup\limits_{i=1}^{r-1}(\mathcal{B}_{i})_1\cup(\mathcal{B}_{i})_2\right)\cup\left(\bigcup\limits_{1\leq n<m\leq r-1}\mathcal{B}_{mn}\right)\cup\left(\bigcup\limits_{\begin{subarray}{c}1\leq i\leq r-1\\l_i>1\end{subarray}}\mathcal{B}_{i}\right)
\end{equation}

is a $(\cdot,\cdot)-$orthogonal basis for $\mathfrak{m}_\Theta$ adapted to the subspaces of Proposition \ref{2.4}. Here $\mathcal{B}_{mn}$ and $\mathcal{B}_i$ are as before and 
\begin{center}
$(\mathcal{B}_{i})_1=\displaystyle\left\{w_{\tilde{l}_{r-1}+s,\tilde{l}_{i-1}+t}-u_{\tilde{l}_{r-1}+s,\tilde{l}_{i-1}+t}:1\leq s\leq l_r,\ 1\leq t\leq l_i\right\},$
\end{center}

\begin{center}
$(\mathcal{B}_{i})_2=\displaystyle\left\{v_{\tilde{l}_{i-1}+t},\ w_{\tilde{l}_{r-1}+s,\tilde{l}_{i-1}+t}+u_{\tilde{l}_{r-1}+s,\tilde{l}_{i-1}+t}:1\leq s\leq l_r,\ 1\leq t\leq l_i\right\}.$ 
\end{center}\hfill $\qed$
\end{prop}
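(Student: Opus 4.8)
The plan is to realise $\mathfrak{m}_\Theta$ explicitly as the span of a sub-collection of the $(\cdot,\cdot)$-orthonormal basis $\{v_k,w_{ij},u_{ij}\}$ of $\mathfrak{k}$ from \eqref{Special1}, and then to regroup that sub-collection according to the blocks $l_1,\dots,l_r$ so that it falls into the subspaces of Proposition \ref{2.4}. The first step is to compute $\mathfrak{k}_\Theta=\Cent_\mathfrak{k}(H_\Theta)$, of which $\mathfrak{m}_\Theta$ is the $(\cdot,\cdot)$-orthogonal complement. Writing a generic $X\in\mathfrak{k}$ in the block form $\left(\begin{smallmatrix}0&-a&-a\\a^{T}&A&B\\a^{T}&B&A\end{smallmatrix}\right)$ and $H_\Theta=\diag(0,\Lambda_\Theta,-\Lambda_\Theta)$ with $\Lambda_\Theta=\diag(c_1,\dots,c_l)$, a direct computation of $[H_\Theta,X]$ shows that $X\in\mathfrak{k}_\Theta$ exactly when $a\Lambda_\Theta=0$, $[\Lambda_\Theta,A]=0$ and $\Lambda_\Theta B+B\Lambda_\Theta=0$. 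Since $v_k$ contributes only to the $a$-part of $X$, $w_{ij}$ only to $A$, and $u_{ij}$ only to $B$, these conditions decouple and yield
\[
\mathfrak{k}_\Theta=\vspan\{v_k:c_k=0\}\ \oplus\ \vspan\{w_{ij}:c_i=c_j\}\ \oplus\ \vspan\{u_{ij}:c_i+c_j=0\};
\]
hence $\mathfrak{m}_\Theta$ is the span of the complementary members of $\{v_k,w_{ij},u_{ij}\}$, which is automatically a $(\cdot,\cdot)$-orthonormal basis of $\mathfrak{m}_\Theta$.

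Next I would take $H_\Theta$ with $\alpha(H_\Theta)>0$ for $\alpha\in\Sigma\setminus\Theta$; since the simple roots inside a block lie in $\Theta$ while those between blocks do not, the $c_i$ are then $\geq 0$, constant on each block $\{\tilde{l}_{m-1}+1,\dots,\tilde{l}_m\}$, strictly decreasing from one block to the next, with $c_l>0$ precisely when $\alpha_l\notin\Theta$ and $c_i=0$ on the last block when $\alpha_l\in\Theta$. If $\alpha_l\notin\Theta$, all $c_i$ are positive, so $\mathfrak{k}_\Theta=\vspan\{w_{ij}:i,j\text{ in the same block}\}$ and $\mathfrak{m}_\Theta$ is spanned by all the $v_k$, all the $u_{ij}$, and the $w_{ij}$ with $i,j$ in different blocks; sorting these by which blocks the indices lie in reproduces exactly the sets $\mathcal{B}_{0,i}$, $\mathcal{B}_{mn}$ and $\mathcal{B}_i$ of \eqref{B3}, and each of them is visibly a basis of the corresponding subspace $V_i$, $M_{mn}=W_{mn}\oplus U_{mn}$ or $U_i$ of Proposition \ref{2.4}; orthonormality is inherited from $\{v_k,w_{ij},u_{ij}\}$, proving (a). If $\alpha_l\in\Theta$, the $v_k$ and the $u_{ij}$ whose indices lie entirely in the last block now also belong to $\mathfrak{k}_\Theta$, so those generators drop out of $\mathfrak{m}_\Theta$; the surviving mixed generators $w_{\tilde{l}_{r-1}+s,\tilde{l}_{i-1}+t}$, $u_{\tilde{l}_{r-1}+s,\tilde{l}_{i-1}+t}$ ($i\leq r-1$) together with the $v_{\tilde{l}_{i-1}+t}$ are precisely those that Proposition \ref{2.4}(d) recombines into $(V_i)_1\oplus(V_i)_2$, and applying the elementary change of coordinates $\{w,u\}\leftrightarrow\{w-u,w+u\}$ turns them into $(\mathcal{B}_i)_1\cup(\mathcal{B}_i)_2$; the remaining generators, with both indices in blocks $1,\dots,r-1$, give the sets $\mathcal{B}_{mn}$ ($n<m\leq r-1$) and $\mathcal{B}_i$ ($i\leq r-1$) just as in the previous case. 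Since $w$ and $u$ are orthonormal, $w\pm u$ are mutually orthogonal of norm $\sqrt{2}$, which is why (b) asserts only orthogonality; all remaining inner products vanish because distinct members of $\{v_k,w_{ij},u_{ij}\}$ are orthogonal.

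To finish I would run a bookkeeping check: the listed sets are pairwise disjoint and involve disjoint families of $v$'s, $w$'s and $u$'s; each spans and is linearly independent inside the subspace to which it is attached (immediate for $\mathcal{B}_{0,i},\mathcal{B}_{mn},\mathcal{B}_i$, and for $(\mathcal{B}_i)_1,(\mathcal{B}_i)_2$ because $\left(\begin{smallmatrix}1&-1\\1&1\end{smallmatrix}\right)$ is invertible); and their total cardinality equals $\dim\mathfrak{k}-\dim\mathfrak{k}_\Theta=\dim\mathfrak{m}_\Theta$, so nothing is omitted or double-counted. I expect the only genuinely delicate point to be case (b): keeping track of which mixed $w$'s and $u$'s survive into $\mathfrak{m}_\Theta$, verifying that $(\mathcal{B}_i)_1\cup(\mathcal{B}_i)_2$ is actually a basis of $(V_i)_1\oplus(V_i)_2$ (in particular that the $v_{\tilde{l}_{i-1}+t}$ appearing there are exactly the $v_k$ not absorbed into $\mathfrak{k}_\Theta$), and reconciling the regrouping with the dimension count. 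Everything else is a transcription of the block decomposition underlying \eqref{B2}.
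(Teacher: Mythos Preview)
The paper does not give a proof of this proposition: it is stated and immediately closed with a $\qed$, the authors treating it as a direct consequence of the explicit description of $\mathfrak{k}$, of the orthonormal basis \eqref{Special1}, and of Proposition~\ref{2.4}. Your argument is correct and supplies exactly the computation the paper leaves implicit: identifying $\mathfrak{k}_\Theta=\Cent_{\mathfrak{k}}(H_\Theta)$ via the three conditions $a\Lambda_\Theta=0$, $[\Lambda_\Theta,A]=0$, $\Lambda_\Theta B+B\Lambda_\Theta=0$, reading off which of the $v_k,w_{ij},u_{ij}$ lie in $\mathfrak{k}_\Theta$ from the block pattern of the $c_i$, and then regrouping the complementary basis vectors block by block. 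Your handling of case~(b), including the observation that $w\pm u$ are only orthogonal (not orthonormal) and hence the statement drops ``orthonormal'' to ``orthogonal'', is also on target. There is nothing to compare against; your write-up is precisely the kind of verification the paper expects the reader to carry out.
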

Next proposition describes invariant metrics on flags of $B_l,$ $l\geq 5.$
\begin{prop}\label{2.6}
Every invariant metric $A$ on a flag of $B_l,$ $l\geq 5$ is written in the bases above in the following form:

$a)$ If $\alpha_l\notin\Theta$ then

\ \ \ \ \ \ \ \ \ \ \ \ \ \ \ \ \ \ \ \ \ \ \ \ \ \ \ \ \ \ \ \ \ $A|_{V_i}=\mu^{(i)}\text{I}_{V_i},$ $1\leq i\leq r,$

\ \ \ \ \ \ \ \ \ \ \ \ \ \ \ \ \ \ \ \ \ \ \ \ \ \ \ \ \ \ \ \ \ $[A|_{M_{mn}}]_{\mathcal{B}_{mn}}=\left(\begin{array}{cc}
\lambda^{(mn)}_1\text{I}_{W_{mn}} & b_{mn}\text{I} \\
 & \\
b_{mn}\text{I} & \lambda^{(mn)}_2\text{I}_{U_{mn}} \\
\end{array}\right),$ $1\leq n<m\leq r,$

\

\ \ \ \ \ \ \ \ \ \ \ \ \ \ \ \ \ \ \ \ \ \ \ \ \ \ \ \ \ \ \ \ \ $A|_{U_i}=\gamma^{(i)}\text
{I}_{U_i},$\  $1\leq i\leq r$ and $l_i>1.$

$b)$ If $\alpha_l\in\Theta$ then 

\ \ \ \ \ \ \ \ \ \ \ \ \ \ \ \ \ \ \ \ \ \ \ \ \ \ \ \ $A|_{(V_i)_1}=\rho^{(i)}\text{I}_{(V_i)_1},$ $1\leq i\leq r-1,$

\

\ \ \ \ \ \ \ \ \ \ \ \ \ \ \ \ \ \ \ \ \ \ \ \ \ \ \ \ $A|_{(V_i)_2}=\mu^{(i)}\text{I}_{(V_i)_2},$ $1\leq i\leq r-1,$

\

\ \ \ \ \ \ \ \ \ \ \ \ \ \ \ \ \ \ \ \ \ \ \ \ \ \ \ \ $[A|_{M_{mn}}]_{\mathcal{B}_{mn}}=\left(\begin{array}{cc}
\lambda^{(mn)}_1\text{I}_{W_{mn}} & b_{mn}\text{I} \\
 & \\
b_{mn}\text{I} & \lambda^{(mn)}_2\text{I}_{U_{mn}} \\
\end{array}\right),$ $1\leq n<m\leq r-1,$

\

\ \ \ \ \ \ \ \ \ \ \ \ \ \ \ \ \ \ \ \ \ \ \ \ \ \ \ \ $A|_{U_i}=\gamma^{(i)}\text{I}_{U_i},$\  $1\leq i\leq r-1$ and $l_i>1.$
\end{prop}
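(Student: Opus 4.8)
The plan is to reduce everything to a computation with the isotropy representation, exactly as was done for $A_3$ in Proposition \ref{2.3}. By the general theory recalled in Section 3, any invariant metric $A$ is a $(\cdot,\cdot)$-self-adjoint positive operator commuting with $\Ad(k)$ for all $k\in K_\Theta$, and it is block-diagonal with respect to the decomposition of $\mathfrak{m}_\Theta$ into isotypical summands. Proposition \ref{2.4} lists the $K_\Theta$-invariant irreducible subspaces: the modules $V_i$ ($\alpha_l\notin\Theta$), $U_i$, and $(V_i)_1,(V_i)_2$ ($\alpha_l\in\Theta$) are pairwise non-equivalent, so by Schur's lemma $A$ acts as a positive scalar $\mu^{(i)}$, $\gamma^{(i)}$, $\rho^{(i)}$ on each of these (these are the first, third, fourth lines of part $a)$ and all of part $b)$ except the $M_{mn}$ block). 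The only modules occurring with multiplicity two in the same isotypical class are $W_{mn}$ and $U_{mn}$; so on $M_{mn}=W_{mn}\oplus U_{mn}$ the operator $A$ need not be scalar, and the content of the proposition is precisely that it has the stated shape $\left(\begin{smallmatrix}\lambda_1^{(mn)}\text{I} & b_{mn}\text{I}\\ b_{mn}\text{I} & \lambda_2^{(mn)}\text{I}\end{smallmatrix}\right)$.

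So the real work is to analyze the restriction $A|_{M_{mn}}$. First I would fix an $\Ad(K_\Theta)$-equivariant isomorphism $T\colon W_{mn}\to U_{mn}$; a natural choice is the one sending the basis vector $w_{\tilde l_{m-1}+i,\tilde l_{n-1}+j}$ to $u_{\tilde l_{m-1}+i,\tilde l_{n-1}+j}$ (one must check this is equivariant, which is essentially the content of ``$W_{mn}$ and $U_{mn}$ are equivalent'' from \cite{PSM}, or can be verified directly from the bracket relations and the formulas \eqref{Special1}). Writing $A|_{M_{mn}}$ in block form as $\left(\begin{smallmatrix}P & Q\\ Q^\ast & R\end{smallmatrix}\right)$ with $P\in\End(W_{mn})$, $R\in\End(U_{mn})$, $Q\colon U_{mn}\to W_{mn}$, the commutation with $\Ad(k)|_{M_{mn}}$ — which in these coordinates is block-scalar $\mathrm{diag}(\pi(k),\pi(k))$ for the common irreducible representation $\pi$ — forces, via Schur, $P=\lambda_1^{(mn)}\text{I}$, $R=\lambda_2^{(mn)}\text{I}$, and $Q=b_{mn}T$ for some scalar $b_{mn}$; identifying $T$ with the identity matrix $\text{I}$ in the bases $\mathcal{B}_{mn}$ (which is exactly how $\mathcal{B}_{mn}$ is ordered in Proposition \ref{2.5}) gives the claimed matrix. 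One subtlety to address: a priori the commutant of two copies of an irreducible real representation could be a $2\times2$ matrix algebra over $\mathbb{R}$, $\mathbb{C}$, or $\mathbb{H}$; I would either invoke the analysis of \cite{PSM} (which presumably identifies the representation type) or note that only the subalgebra of self-adjoint elements matters, and in each case self-adjointness plus positivity cuts this down to the two-parameter (plus one off-diagonal scalar) family above — but one should be careful that the ``$b_{mn}\text{I}$'' really is a real scalar and not, say, a scalar times a complex structure, which requires knowing the modules are of real type. Checking that the bases $\mathcal{B}_\Theta$ of Proposition \ref{2.5} indeed put all of this in the stated block form, and that no cross-terms between different $M_{mn}$'s or between an $M_{mn}$ and a multiplicity-one module survive (immediate from non-equivalence and orthogonality of isotypical summands), completes the argument.

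The main obstacle I anticipate is not any single hard step but the bookkeeping: one must verify the equivariance of $T$ and pin down the representation type of each module $W_{mn}\cong U_{mn}$ precisely enough to be sure the off-diagonal commutant contribution is one-dimensional over $\mathbb{R}$. This is exactly the kind of case-by-case verification that \cite{PSM} carries out, so in practice I would lean on their classification rather than redo it, reducing the proof to: (i) cite Proposition \ref{2.4} for the list of irreducible modules and their equivalences; (ii) apply Schur's lemma to the non-equivalent modules to get the scalar blocks; (iii) for each pair $(W_{mn},U_{mn})$, run the $2\times2$ commutant computation — entirely analogous to the explicit $\Ad(k)$-matrix computation done for $A_3$ in the proof of Proposition \ref{2.3} — to get the stated form; and (iv) observe that self-adjointness forces the off-diagonal blocks to be transposes of each other and hence, being scalars $\times\,\text{I}$, equal, yielding a single parameter $b_{mn}$. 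The positivity of $\mu^{(i)},\gamma^{(i)},\rho^{(i)},\lambda_j^{(mn)}$ is automatic since $A$ is positive definite.
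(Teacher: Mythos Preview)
Your proposal is correct and, in its concrete fallback (your step (iii)), it is exactly what the paper does. The paper's proof writes the off-diagonal block of $A|_{M_{mn}}$ as a general matrix $B=(b^{st}_{ef})$, computes $\Ad(k)$ on the basis vectors $w_{\tilde l_{m-1}+s,\tilde l_{n-1}+t}$ and $u_{\tilde l_{m-1}+s,\tilde l_{n-1}+t}$ for $k=\mathrm{diag}(1,P,P)$ with $P\in S(O(l_1)\times\cdots\times O(l_r))$, and then, by plugging in specific diagonal $P$'s with $\pm1$ entries, forces $b^{st}_{ef}=0$ for $(s,t)\neq(e,f)$ and all surviving $b^{st}_{st}$ equal. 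There is a small case split ($r>2$ versus $r=2$, the latter using $l\ge 5$ to guarantee one of $l_1,l_2$ exceeds $2$) to make the sign choices work while keeping $\det P=1$.

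Your abstract Schur route is a genuine alternative framing, and your identification of the real-type question as the crux is exactly right: the paper's explicit computation can be read precisely as a hands-on proof that the space of $K_\Theta$-equivariant maps $W_{mn}\to U_{mn}$ is one-dimensional, spanned by your $T$. So the paper bypasses any appeal to \cite{PSM} for the representation type by establishing it directly. Either path is fine; the paper's is self-contained at the cost of some index bookkeeping, while yours would be shorter provided the real-type claim is already available.
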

\begin{proof}

It is enough to prove the result for $A|_{M_{mn}}.$ By equation \eqref{6} we have

\begin{center}
$[A|_{M_{mn}}]_{\mathcal{B}_{mn}}=\left(\begin{array}{cc}
\lambda^{(mn)}_1\text{I}_{W_{mn}} & B^T \\
 & \\
B & \lambda^{(mn)}_2\text{I}_{U_{mn}} \\
\end{array}\right).$
\end{center}

Let us take
\begin{center}
$k=\left(\begin{array}{ccc}
1 & \textbf{0} & \textbf{0}\\
\textbf{0} & P & \textbf{0} \\
\textbf{0} & \textbf{0} & P \\
\end{array}\right)\in K_\Theta$
\end{center}
 
where $\det(P)=1$ and $P$ is a block diagonal matrix 

\begin{center}
$P=\left(\begin{array}{cccc}
P_1 & \textbf{0} & \dots & \textbf{0} \\
\textbf{0} & P_2 & \dots & \textbf{0} \\
\vdots & \vdots & \ddots & \vdots \\
\textbf{0} & \textbf{0} & \dots & P_r \\
\end{array}\right),$ $P_i\in O(l_i)$ for $i=1,...,r.$
\end{center}

If $P_i=\left(p^{i}_{st}\right)_{l_i\times l_i},$ then

\begin{center}
$\begin{array}{ccl}
\Ad(k)w_{\tilde{l}_{m-1}+s,\tilde{l}_{n-1}+t} & = & \displaystyle \sum\limits_{e=1}^{l_m}\sum\limits_{f=1}^{l_n}p_{es}^mp_{ft}^n\ w_{\tilde{l}_{m-1}+e,\tilde{l}_{n-1}+f}
\end{array}$
\end{center}

and

\begin{center}
$\begin{array}{ccl}
\Ad(k)u_{\tilde{l}_{m-1}+s,\tilde{l}_{n-1}+t} & = & \displaystyle \sum\limits_{e=1}^{l_m}\sum\limits_{f=1}^{l_n}p_{es}^mp_{ft}^n\ u_{\tilde{l}_{m-1}+e,\tilde{l}_{n-1}+f},
\end{array}$
\end{center}

Also, we have that for every $(s,t)$ there exist a set of real numbers $\{b^{st}_{ef}:1\leq e\leq l_m,\ 1\leq f\leq l_n\}$ (each $b_{ef}^{st}$ is an entry of the matrix $B$) such that  

\begin{center}
$\begin{array}{ccl}
Aw_{\tilde{l}_{m-1}+s,\tilde{l}_{n-1}+t} & = & \lambda_1^{(mn)}w_{\tilde{l}_{m-1}+s,\tilde{l}_{n-1}+t}+ \displaystyle \sum\limits_{e=1}^{l_m}\sum\limits_{f=1}^{l_n}b^{st}_{ef}\ u_{\tilde{l}_{m-1}+e,\tilde{l}_{n-1}+f},
\end{array}$
\end{center}

So

\begin{center}
$\begin{array}{ccl}
\Ad(k)\circ Aw_{\tilde{l}_{m-1}+s,\tilde{l}_{n-1}+t} & = & \displaystyle \lambda_1^{(mn)}\sum\limits_{e=1}^{l_m}\sum\limits_{f=1}^{l_n}p^{m}_{es}p^{n}_{ft}\ w_{\tilde{l}_{m-1}+e,\tilde{l}_{n-1}+f}\\
 & & \displaystyle+\sum\limits_{\tilde{e},e=1}^{l_m}\sum\limits_{\tilde{f},f=1}^{l_n}b^{st}_{ef}p^{m}_{\tilde{e}e}p^{n}_{\tilde{f}f}\ u_{\tilde{l}_{m-1}+\tilde{e},\tilde{l}_{n-1}+\tilde{f}}
\end{array}$
\end{center}

and

\begin{center}
$\begin{array}{ccl}
A\circ \Ad(k)w_{\tilde{l}_{m-1}+s,\tilde{l}_{n-1}+t} & = & \displaystyle \lambda_1^{(mn)}\sum\limits_{e=1}^{l_m}\sum\limits_{f=1}^{l_n}p^{m}_{es}p^{n}_{ft}\ w_{\tilde{l}_{m-1}+e,\tilde{l}_{n-1}+f}\\
 & & \displaystyle+\sum\limits_{\tilde{e},e=1}^{l_m}\sum\limits_{\tilde{f},f=1}^{l_n}b^{ef}_{\tilde{e}\tilde{f}}p^{m}_{es}p^{n}_{ft}\ u_{\tilde{l}_{m-1}+\tilde{e},\tilde{l}_{n-1}+\tilde{f}}.
\end{array}$
\end{center}

Since $A$ commutes with $\Ad(k)$ then 
\begin{equation}\label{B5}
\displaystyle\sum\limits_{e=1}^{l_m}\sum\limits_{f=1}^{l_n}b^{st}_{ef}p^{m}_{\tilde{e}e}p^{n}_{\tilde{f}f}=\sum\limits_{e=1}^{l_m}\sum\limits_{f=1}^{l_n}b^{ef}_{\tilde{e}\tilde{f}}p^{m}_{es}p^{n}_{ft},
\end{equation}

for $1\leq s,\tilde{e}\leq l_m,$ and $1\leq t,\tilde{f}\leq l_n.$ Fixing $s,t,\tilde{e},\tilde{f},$ we shall show that $b^{st}_{\tilde{e}\tilde{f}}=0$ if $(s,t)\neq(\tilde{e},\tilde{f}).$ First, we suppose $r>2$. By taking $P_m=\diag(1,..,1,-1,1,...,1)$, with $-1$ in the $(\tilde{e},\tilde{e})-$entry, $P_i=\diag(-1,1,...,1)$ for some $i\notin\{m,n\}$ (which exists because $r>2$) and $P_j=\text{I}$, for $j\notin\{m,i\}$ we have

\begin{center}
$-b^{st}_{\tilde{e}\tilde{f}}=b^{st}_{\tilde{e}\tilde{f}}p^m_{ss},$
\end{center}

since $p^m_{ss}=1$ for $s\neq\tilde{e},$ then $b^{st}_{\tilde{e}\tilde{f}}=0$  if $s\neq\tilde{e}.$ By taking $P_n=\diag(1,...,1,-1,1,...,1),$ with $-1$ in the $(\tilde{f},\tilde{f})-$entry, $P_i=\diag(-1,1,...,1)$ for some $i\notin\{m,n\}$ and $P_j=\text{I}$ for $j\notin\{n,i\}$ we have

\begin{center}
$-b^{st}_{\tilde{e}\tilde{f}}=b^{st}_{\tilde{e}\tilde{f}}p^n_{tt},$
\end{center}

again, $p^{n}_{tt}=1$ for $t\neq\tilde{f}$,then $b^{st}_{\tilde{e}\tilde{f}}=0$  if $t\neq\tilde{f}.$ We conclude that $b^{st}_{\tilde{e}\tilde{f}}=0$ if $(s,t)\neq(\tilde{e},\tilde{f}).$ If $r=2$ then $m=2,$ $n=1$ and we have two possibilities:

$\bullet$ $l_1>2:$ In this case, we take $P_2=\diag(1,...,1,-1,1,...,1)$ and $P_1=\diag(1,...,1,-1,1,...,1),$ where $P_2$ has $-1$ in the $(\tilde{e},\tilde{e})-$entry and $P_1$ has $-1$ in the $(j,j)-$entry for some $j\notin\{\tilde{f},t\}$ and we obtain from \eqref{B5} that 
\begin{center}
$-b^{st}_{\tilde{e}\tilde{f}}=b^{st}_{\tilde{e}\tilde{f}}p^2_{ss}.$
\end{center}
Then $s\neq \tilde{e}\Longrightarrow b^{st}_{\tilde{e}\tilde{f}}=0.$ For $s=\tilde{e},$ we take $P_2=\diag(1,...,1,-1,1,...,1)$ with $-1$ in the $(\tilde{e},\tilde{e})-$entry, $P_1=\diag(1,...,1,-1,1,...,1)$ with $-1$ in the $(\tilde{f},\tilde{f})-$entry and therefore 
\begin{center}
$b^{\tilde{e}t}_{\tilde{e}\tilde{f}}=-b^{\tilde{e}t}_{\tilde{e}\tilde{f}}p^1_{tt},$
\end{center}
so $t\neq\tilde{f}\Longrightarrow b^{\tilde{e}t}_{\tilde{e}\tilde{f}}=0.$

$\bullet$ $l_1\leq 2:$ Since $l\geq 5$ and $l_1+l_2=l,$ we have that $l_2>2$ and we can proceed analogously as before. 

We conclude that $b^{st}_{\tilde{e}\tilde{f}}=0$ of $(s,t)\neq(\tilde{e},\tilde{f}),$ thus equation \eqref{B5} becomes
\begin{equation}\label{B6}
b^{st}_{st}p^m_{\tilde{e}s}p^n_{\tilde{f}t}=b^{\tilde{e}\tilde{f}}_{\tilde{e}\tilde{f}}p^m_{\tilde{e}s}p^n_{\tilde{f}t},
\end{equation}
by taking $P_m\in SO(l_m)$ with non-zero $(\tilde{e},s)-$entry and $P_n\in SO(l_n)$ with non-zero $(\tilde{f},t)-$entry, we have $b^{st}_{st}=b^{\tilde{e}\tilde{f}}_{\tilde{e}\tilde{f}}=:b_{mn}$ for all $s,t,\tilde{e},\tilde{f}.$ Then 

\begin{center}
$\begin{array}{ccl}
Aw_{\tilde{l}_{m-1}+s,\tilde{l}_{n-1}+t} & = & \lambda_1^{(mn)}w_{\tilde{l}_{m-1}+s,\tilde{l}_{n-1}+t}+ b_{mn}\ u_{\tilde{l}_{m-1}+s,\tilde{l}_{n-1}+t}.
\end{array}$
\end{center}Hence, $B=b_{mn}\text{I}.$\end{proof}
\subsection{Flags of $C_l,$ $l\geq 3$}
The set of roots of the Lie algebra of type $C_l$ is described as follows:

\begin{itemize}
\item The long ones $\pm 2\lambda_i,$ $1\leq i\leq l,$ and
\item the short ones $\pm(\lambda_i-\lambda_j)$ and $\pm(\lambda_i+\lambda_j),$ $1\leq i <j\leq l$
\end{itemize}

where

\begin{center}
$\begin{array}{rccc}
\lambda_i: & \left\{H=\left(\begin{array}{cc}\Lambda & \textbf{0}\\ \textbf{0} & -\Lambda\end{array}\right):\Lambda=\diag(a_1,...,a_l)\right\} & \longrightarrow & \mathbb{R},
\end{array}$ $\lambda_i(H)=a_i,$ $i=1,...,l.$
\end{center}

The simple roots are $\alpha_i=\lambda_i-\lambda_{i+1},$ $1\leq i\leq l-1$ and $\alpha_l=2\lambda_l.$ The subalgebra $\mathfrak{k}$ is the set of $2l\times2l$ matrices

\begin{center}
$\left(\begin{array}{cc}
A & -B \\
B & A
\end{array}\right)$, $A+A^T=B-B^T=\textbf{0}$
\end{center}

which is isomorphic to $\mathfrak{u}(l),$ where the isomorphism associates the above matrix to $A+\sqrt{-1}B.$ We fix the $\Ad(K)-$invariant inner product $(\cdot,\cdot)$ on $\mathfrak{k}$ defined by

\begin{equation}\label{11}
\left(\left(\begin{array}{cc}
A & -B \\
B & A
\end{array}\right),\left(\begin{array}{cc}
C & -D \\
D & C
\end{array}\right)\right)=\displaystyle\frac{1}{2}(Tr(BD)-Tr(AC)).
\end{equation}

The $2l\times2l$ matrices 
\begin{center}
$\begin{array}{ll}
u_{kk}=E_{l+k,k}-E_{k,l+k}, & 1\leq k \leq l, \\
w_{ij}=E_{ij}-E_{ji}+E_{l+i,l+j}-E_{l+j,l+i}, & \\
u_{ij}=E_{l+i,j}+E_{l+j,i}-E_{i,l+j}-E_{j,l+i}, & 1\leq j<i\leq l\\
\end{array}$
\end{center}

form a $(\cdot,\cdot)-$orthogonal basis for $\mathfrak{k}.$ In what follows, we describe the $K_\Theta-$invariant, irreducible subspaces . Similar to the case $B_l$, we can find positive integers $l_1,...,l_r$ such that $l=l_1+...+l_r$ and $\Theta$ is written as disjoint union of its connected components as

\begin{equation}\label{12}
\Theta=\bigcup\limits_{l_i>1}\{\alpha_{\tilde{l}_{i-1}+1},...,\alpha_{\tilde{l}_i-1}\} \text{ or } \bigcup\limits_{l_i>1}\{\alpha_{\tilde{l}_{i-1}+1},...,\alpha_{\tilde{l}_i-1}\}\cup\{\alpha_l\}
\end{equation}

where $\tilde{l}_0=0$, $\tilde{l}_{i-1}+l_i,$ $i=1,...,r.$ If $\alpha_l\not\in\Theta$, then $K_\Theta\stackrel{\text{dif.}}{\approx}O(l_1)\times...\times O(l_r),$ otherwise, we have $K_\Theta\stackrel{\text{dif.}}{\approx}O(l_1)\times...\times O(l_{r-1})\times U(l_r).$

\begin{prop}\label{2.7}(\cite{PSM}) Let $\mathbb{F}_\Theta$ be a flag manifold of $C_l,$ with $l\neq4.$ The following subspaces are $K_\Theta-$invariant irreducible:

$a)$ 
\begin{center}$V_i=\vspan\{u_{\tilde{l}_{i-1}+1,\tilde{l}_{i-1}+1}+...+u_{\tilde{l}_i,\tilde{l}_i}\},$\end{center}

with $1\leq i\leq r$ if $\alpha_l\not\in\Theta$ and $1\leq i\leq r-1$ if $\alpha_l\in\Theta.$ All these subspaces are equivalent.

$b)$
\begin{center}
$W_{mn}=\displaystyle\bigoplus\limits_{\begin{subarray}{c} 1\leq i\leq l_m\\
1\leq j\leq l_n\end{subarray}}\vspan\{w_{\tilde{l}_{m-1}+i,\tilde{l}_{n-1}+j}\}$ and $U_{mn}=\displaystyle\bigoplus\limits_{\begin{subarray}{c} 1\leq i\leq l_m\\
1\leq j\leq l_n\end{subarray}}\vspan\{u_{\tilde{l}_{m-1}+i,\tilde{l}_{n-1}+j}\},$
\end{center}

with $1\leq n<m\leq r$ if $\alpha_l\not\in\Theta$ and $1\leq n<m\leq r-1$ if $\alpha_l\in\Theta.$ For each $(m,n)$, $W_{mn}$ and $U_{mn}$ are equivalent.

$c)$
\begin{center}
$M_{rn}=\displaystyle\bigoplus\limits_{\begin{subarray}{c} 1\leq i\leq l_r\\
1\leq j\leq l_n\end{subarray}}\vspan\{w_{\tilde{l}_{r-1}+i,\tilde{l}_{n-1}+j}\}\oplus\bigoplus\limits_{\begin{subarray}{c} 1\leq i\leq l_r\\
1\leq j\leq l_n\end{subarray}}\vspan\{u_{\tilde{l}_{r-1}+i,\tilde{l}_{n-1}+j}\},$
\end{center}

with $1\leq n\leq r-1,$ if $\alpha_l\in\Theta.$ All these subspaces are not equivalent.

$d)$
\begin{center}
$U_i=\vspan\{u_{\tilde{l}_{i-1}+s,\tilde{l}_{i-1}+s}-u_{\tilde{l}_{i-1}+s+1,\tilde{l}_{i-1}+s+1}:1\leq s\leq l_i-1\}\cup\{u_{\tilde{l}_{i-1}+s,\tilde{l}_{i-1}+t}:1\leq t<s\leq l_i\},$
\end{center}

for $i$ such that $l_i>1$ and $1\leq i\leq r$ if $\alpha_l\not\in\Theta,$ $1\leq i\leq r-1$ if $\alpha_l\in\Theta.$ All these subspaces are not equivalent.

And any other pair of subspaces are not equivalent.\hfill $\qed$
\end{prop}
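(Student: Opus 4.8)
The plan is to work entirely inside $\mathfrak{k}\cong\mathfrak{u}(l)$ with the $(\cdot,\cdot)$-orthogonal basis $\{u_{kk},w_{ij},u_{ij}\}$ fixed above, and to decompose the isotropy representation of $K_\Theta$ on $\mathfrak{m}_\Theta$ factor by factor. First I would identify $\mathfrak{k}_\Theta=\Cent_{\mathfrak{k}}(H_\Theta)$: in the $\mathfrak{u}(l)$-model $H_\Theta$ is a real diagonal matrix that is constant on each of the $r$ index-blocks $B_p=\{\tilde l_{p-1}+1,\dots,\tilde l_p\}$, so $\mathfrak{k}_\Theta$ is spanned by those basis vectors whose two indices lie in a common block, the only subtlety being that when $\alpha_l\in\Theta$ the last block contributes its whole $\mathfrak{u}(l_r)$ (all $w$'s and all $u$'s on $B_r$), while when $\alpha_l\notin\Theta$ it contributes only its $\mathfrak{so}(l_r)$-part. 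The complement $\mathfrak{m}_\Theta$ is then the span of all remaining basis vectors, and one checks at once that the families in (a)--(d) partition this spanning set; so what remains is to verify $\Ad(K_\Theta)$-invariance, irreducibility, and the asserted (in)equivalences.

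Next I would compute $\Ad(k)$ on each basis vector for $k=\diag(P_1,\dots,P_r)$ with $P_p\in O(l_p)$ (and $P_r\in U(l_r)$ when $\alpha_l\in\Theta$), exactly as in the proof of Proposition~\ref{2.6}. The off-block families $W_{mn}$ and $U_{mn}$ transform as the real tensor product of the defining representations of $O(l_m)$ and $O(l_n)$ (with the same matrix coefficients $p^m_{es}p^n_{ft}$ in both cases), hence are irreducible with scalar commutant; and since those coefficients are literally the same on the $w$'s and on the $u$'s, the map $w_{\bullet}\mapsto u_{\bullet}$ is an $\Ad(K_\Theta)$-equivariant isomorphism $W_{mn}\to U_{mn}$ (multiplication by $\sqrt{-1}$ in the $\mathfrak{u}(l)$-picture). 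Each $V_i$ is the line spanned by $u_{\tilde l_{i-1}+1,\tilde l_{i-1}+1}+\dots+u_{\tilde l_i,\tilde l_i}$, which represents $\sqrt{-1}\,\mathrm{I}_{l_i}$ and is fixed by conjugation by $O(l_i)$, so it is a trivial one-dimensional module and any two $V_i$ are equivalent. The diagonal family $U_i$ (defined only on blocks carrying an orthogonal factor) is the orthogonal complement of $V_i$ in the span of the symmetric $u$'s on $B_i$, i.e.\ the traceless symmetric square of the defining representation of $O(l_i)$, which is irreducible; and when $\alpha_l\in\Theta$ the module $M_{rn}=W_{rn}\oplus U_{rn}$ is, in the $\mathfrak{u}(l)$-model, the space of off-block entries between $B_r$ and $B_n$ on which $U(l_r)$ acts $\mathbb{C}$-linearly and $O(l_n)$ acts on the other side, i.e.\ the real module $\mathbb{C}^{l_r}\otimes_{\mathbb{R}}\mathbb{R}^{l_n}$, which is irreducible over $\mathbb{R}$ because the complex structure coming from $U(l_r)$ obstructs any splitting into $W_{rn}\oplus U_{rn}$.

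For the inequivalences I would use the isomorphism invariant ``support'': to each irreducible $K_\Theta$-submodule one attaches the set of simple factors $O(l_p)$ (or $U(l_r)$) on which it acts nontrivially. This set is $\emptyset$ for every $V_i$, $\{i\}$ for $U_i$, $\{m,n\}$ for $W_{mn}$, $U_{mn}$ and $M_{mn}$, and $\{r,n\}$ for $M_{rn}$; since these are pairwise distinct across distinct members of (b)--(d) and none of them is empty, no two of those modules are equivalent and none is equivalent to a $V_i$, while within (b) the isomorphism $W_{mn}\cong U_{mn}$ constructed above is the only coincidence. The few borderline cases — such as a block of size one, where a factor contributes only a sign character — I would settle by a dimension count or by restricting the action to a single factor and comparing characters.

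The step I expect to be the main obstacle, and the reason for the hypothesis $l\neq4$, is the uniform control of irreducibility and inequivalence when $\alpha_l\in\Theta$ and in low rank: one has to be certain that $M_{rn}$ does not split off a proper submodule and that the traceless-symmetric modules $U_i$ never accidentally become isomorphic to the tensor-product modules $W_{mn}$ — which is precisely what fails for $C_4$ on account of the low-dimensional coincidences of that rank. The complete verification is carried out in \cite{PSM}.
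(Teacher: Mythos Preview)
The paper does not give its own proof of this proposition: it is stated with a citation to \cite{PSM} and closed with a \qed. Your sketch is therefore not competing with an argument in the paper but rather filling in what the paper outsources, and in its last line you correctly defer the complete verification to \cite{PSM}, which is exactly what the paper does.

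As a sketch your outline is sound and follows the natural route: identify $K_\Theta$ as a product of orthogonal (and possibly one unitary) factors, recognise each $W_{mn}$ and $U_{mn}$ as the external tensor product $\mathbb{R}^{l_m}\boxtimes\mathbb{R}^{l_n}$ of defining representations (absolutely irreducible, with the explicit intertwiner $w_\bullet\mapsto u_\bullet$), each $V_i$ as the trivial line, each $U_i$ as the traceless symmetric square of the defining $O(l_i)$-module, and $M_{rn}$ as $\mathbb{C}^{l_r}\boxtimes\mathbb{R}^{l_n}$ when the last factor is unitary. Your ``support'' invariant is a clean way to separate the inequivalent pieces. Two small cautions: first, the support argument alone does not distinguish $W_{mn}$ from $U_{mn}$ (same support), so you are right to treat that pair separately; second, the exceptional behaviour at $l=4$ is not that some $U_i$ becomes reducible or collides with a $W_{mn}$, but rather that extra equivalences appear among the off-block one-dimensional pieces when several $l_i$ equal $1$ (see Table~1 in the paper), so your diagnosis of where the hypothesis $l\neq4$ enters is slightly off, though your conclusion---that this is where a uniform argument breaks and one must check cases as in \cite{PSM}---is correct.
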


\begin{cor}\label{2.8}
Let $\mathbb{F}_\Theta$ be a flag of $C_l$, $l\neq 4$. Then, the isotypical summands corresponding  to the $K_\Theta-$invariant spaces in Proposition \ref{2.7} are

\begin{equation}\label{13}
\begin{array}{lcl}
M_0 & = & \displaystyle\bigoplus\limits_{i}V_i,\\
\\
M_{mn} & = & W_{mn}\oplus U_{mn},\ 1\leq n<m\leq r, \\
\\
M_j & = & U_j,\ l_j>1.
\end{array}
\end{equation}

Where $i$ extends over $\{1,...,r\}$ if $\alpha_l\notin\Theta$ and over $\{1,...,r-1\}$ if $\alpha_l\in\Theta.$ \hfill $\qed$
\end{cor}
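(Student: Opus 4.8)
The plan is to read the statement off directly from Proposition \ref{2.7} together with the general description of isotypical summands recalled at the beginning of this section. Recall that, once $\mathfrak{m}_\Theta$ has been decomposed into $\Ad(K_\Theta)$-irreducible submodules as in \eqref{3}, an isotypical summand is by construction the sum of the members of a single $\Ad(K_\Theta)$-equivalence class. Thus the corollary reduces to two tasks: first, checking that the submodules $V_i$, $W_{mn}$, $U_{mn}$, $M_{rn}$, $U_i$ furnished by Proposition \ref{2.7} already exhaust $\mathfrak{m}_\Theta$, so that they do constitute a decomposition of the form \eqref{3}; and second, reading off the partition of this list into equivalence classes from the statement of Proposition \ref{2.7}.

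For the first task I would argue by a dimension count, treating the cases $\alpha_l\notin\Theta$ and $\alpha_l\in\Theta$ separately. Using $\mathfrak{k}\cong\mathfrak{u}(l)$, one has $\dim\mathfrak{m}_\Theta=l^2-\dim\mathfrak{k}_\Theta$, where $\dim\mathfrak{k}_\Theta=\sum_{i=1}^r\binom{l_i}{2}$ if $\alpha_l\notin\Theta$ and $\dim\mathfrak{k}_\Theta=\sum_{i=1}^{r-1}\binom{l_i}{2}+l_r^2$ if $\alpha_l\in\Theta$; on the other side $\dim V_i=1$, $\dim W_{mn}=\dim U_{mn}=l_ml_n$, $\dim M_{rn}=2l_rl_n$ and $\dim U_i=(l_i-1)+\binom{l_i}{2}$. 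Adding these over the ranges prescribed in Proposition \ref{2.7} and using the elementary identity $\sum_il_i^2+2\sum_{n<m}l_ml_n=l^2$ (and its analogue over the first $r-1$ blocks when $\alpha_l\in\Theta$) one recovers exactly $\dim\mathfrak{m}_\Theta$, so the listed submodules do span all of $\mathfrak{m}_\Theta$; alternatively, one may simply cite \cite{PSM}, where completeness of the list is established.

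For the second task I would group the submodules according to the equivalences asserted in Proposition \ref{2.7}. Part (a) says the $V_i$ are mutually equivalent, so they form one class whose sum is $M_0=\bigoplus_iV_i$. Part (b), together with the closing sentence of that proposition, says that for each fixed $(m,n)$ exactly the pair $W_{mn}$, $U_{mn}$ is equivalent, which gives the class with sum $M_{mn}=W_{mn}\oplus U_{mn}$. When $\alpha_l\in\Theta$, the submodule $M_{rn}$ of part (c) is already irreducible and, again by the closing sentence, inequivalent to all the others, hence is itself an isotypical summand (the case $m=r$ of $M_{mn}$ in the statement). Finally, part (d) says the $U_j$ with $l_j>1$ are pairwise inequivalent and inequivalent to everything else, so each gives a singleton class with $M_j=U_j$. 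Collecting these classes yields \eqref{13}.

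I do not expect a genuine obstacle: Proposition \ref{2.7}, taken from \cite{PSM}, carries all the representation-theoretic weight, and what remains is bookkeeping. The points needing mild care are keeping the cases $\alpha_l\in\Theta$ and $\alpha_l\notin\Theta$ consistent throughout, the arithmetic in the dimension count of the first task, and the observation that when $\alpha_l\in\Theta$ the space $M_{rn}$ does not split any further, so it is already an isotypical summand rather than a sum of two equivalent irreducible pieces.
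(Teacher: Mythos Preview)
Your proposal is correct and matches the paper's approach: the paper gives no proof at all for this corollary (it ends with $\qed$ immediately after the statement), treating it as an immediate consequence of the equivalence information recorded in Proposition \ref{2.7}. Your write-up simply spells out the bookkeeping that the paper leaves implicit, including a dimension count that the paper does not bother to display.
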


\begin{prop}\label{2.9}
Let $\Theta\subseteq\Sigma,$ with $l\neq 4,$ $\alpha_l\notin\Theta$ and $l_1,...,l_r$ as in \eqref{12}. Then
\begin{equation}\label{14}
\mathcal{B}=\displaystyle\mathcal{B}_0\cup\left(\bigcup\limits_{1\leq n<m\leq r}\mathcal{B}_{mn}\right)\cup\left(\bigcup\limits_{l_i>1}\mathcal{B}_{i}\right)
\end{equation}

is a $(\cdot,\cdot)-$orthogonal basis for $\mathfrak{m}_\Theta$ adapted to the decomposition

\begin{center}
$\mathfrak{m}_\Theta=\displaystyle M_0\oplus\left(\bigoplus\limits_{1\leq n<m\leq r}M_{mn}\right)\oplus\left(\bigoplus\limits_{l_i>1}M_{i}\right)$
\end{center}

Where

\begin{center}
$\mathcal{B}_0=\displaystyle\left\{\frac{1}{\sqrt{l_i}}u_{\tilde{l}_{i-1}+1,\tilde{l}_{i-1}+1}+...+u_{\tilde{l}_i,\tilde{l}_i}:i=1,...,r\right\},$
\end{center}

\begin{center}
$\mathcal{B}_{mn}=\{w_{\tilde{l}_{m-1}+s,\tilde{l}_{n-1}+t}:s=1,...,l_m,\ t=1,...,l_n\}\cup\{u_{\tilde{l}_{m-1}+s,\tilde{l}_{n-1}+t}:s=1,...,l_m,\ t=1,...,l_n\}$
\end{center} 

\begin{center}
$\mathcal{B}_{i}=\left\{\frac{1}{s}\sum\limits_{t=1}^{s}u_{\tilde{l}_{i-1}+t,\tilde{l}_{i-1}+t}-u_{\tilde{l}_{i-1}+s+1,\tilde{l}_{i-1}+s+1}:s=1,...,l_i-1\right\}\cup\left\{u_{\tilde{l}_{i-1}+s,\tilde{l}_{i-1}+t}:1\leq t<s\leq l_i\right\}.$
\end{center}

If $\alpha_l\in\Theta,$ $i$ extends only over $\{1,...,r-1\}.$

\end{prop}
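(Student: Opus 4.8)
The plan is to realize $\mathcal B$ as a mild rearrangement of the fixed $(\cdot,\cdot)$-orthogonal basis $\{u_{kk},w_{ij},u_{ij}\}$ of $\mathfrak k$: first pin down which of these basis vectors span $\mathfrak k_\Theta$ and which span $\mathfrak m_\Theta$, then sort the $\mathfrak m_\Theta$-vectors into the summands of Corollary~\ref{2.8}, and finally orthogonalize only the part that genuinely needs it.

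First I would compute $\mathfrak k_\Theta=\Cent_{\mathfrak k}(H_\Theta)$ explicitly. Write $H_\Theta=\diag(\Lambda_\Theta,-\Lambda_\Theta)$ with $\Lambda_\Theta=\diag(a_1,\dots,a_l)$, where the $a_i$ are constant along each of the blocks of sizes $l_1,\dots,l_r$ determined by $\Theta$; since $\alpha_l\notin\Theta$ one has $a_1\ge\cdots\ge a_l>0$. An element of $\mathfrak k$ with blocks $A$ (skew-symmetric) and $B$ (symmetric) commutes with $H_\Theta$ if and only if $A\Lambda_\Theta=\Lambda_\Theta A$ and $B\Lambda_\Theta=-\Lambda_\Theta B$; positivity of the $a_i$ forces $B=0$, and the first relation forces $A$ to be block-diagonal along $l_1,\dots,l_r$. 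Hence $\mathfrak k_\Theta=\vspan\{w_{\tilde l_{k-1}+i,\tilde l_{k-1}+j}:1\le j<i\le l_k,\ 1\le k\le r\}\cong\mathfrak{so}(l_1)\oplus\cdots\oplus\mathfrak{so}(l_r)$, and its $(\cdot,\cdot)$-orthogonal complement $\mathfrak m_\Theta$ is spanned by: all the $u_{kk}$; all the $u_{ij}$ with $i>j$; and those $w_{ij}$ whose two indices lie in \emph{different} blocks. Comparing with Corollary~\ref{2.8}, the $w$'s and $u$'s with indices in blocks $m\neq n$ are exactly $\mathcal B_{mn}$, a basis of $W_{mn}\oplus U_{mn}=M_{mn}$, while the $u_{ij}$ with $i>j$ inside a single block $i$, together with the $l_i$ diagonal vectors $u_{kk}$ of that block, must be arranged into bases of $V_i\subseteq M_0$ and $U_i=M_i$.

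The only real computation is that diagonal arrangement. Fix a block $i$ and put $e_t=u_{\tilde l_{i-1}+t,\tilde l_{i-1}+t}$, $t=1,\dots,l_i$; these are pairwise $(\cdot,\cdot)$-orthogonal and of equal norm. The element of $\mathcal B_0$ attached to block $i$ is $\tfrac1{\sqrt{l_i}}(e_1+\cdots+e_{l_i})$, which spans $V_i$, and the diagonal elements of $\mathcal B_i$ are $d_s=\tfrac1s(e_1+\cdots+e_s)-e_{s+1}$, $s=1,\dots,l_i-1$. A direct computation shows $(e_1+\cdots+e_{l_i},d_s)=0$ and $(d_s,d_{s'})=0$ for $s\neq s'$, and one recognizes $d_s$ as exactly the Gram--Schmidt orthogonalization of the chain $u_{\tilde l_{i-1}+s,\tilde l_{i-1}+s}-u_{\tilde l_{i-1}+s+1,\tilde l_{i-1}+s+1}$ of Proposition~\ref{2.7}(d); hence $\{d_1,\dots,d_{l_i-1}\}$ is an orthogonal basis of $U_i$ and $\{\tfrac1{\sqrt{l_i}}\sum_t e_t,d_1,\dots,d_{l_i-1}\}$ an orthogonal basis of $\vspan\{e_1,\dots,e_{l_i}\}$. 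Because distinct blocks involve disjoint sets of $e_t$'s, and because the vectors appearing in the various $\mathcal B_{mn}$ are members of the fixed orthogonal basis of $\mathfrak k$ (hence mutually orthogonal and orthogonal to every diagonal vector), the union $\mathcal B=\mathcal B_0\cup\bigcup_{1\le n<m\le r}\mathcal B_{mn}\cup\bigcup_{l_i>1}\mathcal B_i$ is $(\cdot,\cdot)$-orthogonal. A dimension count, $r+\sum_{n<m}2l_ml_n+\sum_{l_i>1}\bigl((l_i-1)+\binom{l_i}{2}\bigr)=l^2-\sum_k\binom{l_k}{2}=\dim\mathfrak m_\Theta$, then shows $\mathcal B$ is a basis; since by construction $\mathcal B_0$, $\mathcal B_{mn}$ and $\mathcal B_i$ are bases of $M_0$, $M_{mn}$ and $M_i$ respectively, $\mathcal B$ is adapted to the decomposition of Corollary~\ref{2.8}. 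The case $\alpha_l\in\Theta$ (where $\mathcal B_0$ runs only over $i=1,\dots,r-1$) goes through in the same way, using the summands $V_i$ and $M_{rn}$ of Proposition~\ref{2.7} in place of the above.

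The step I expect to need the most care is the orthogonality bookkeeping of the diagonal directions: checking that the particular representatives $\tfrac1s\sum_{t\le s}u_{\tilde l_{i-1}+t,\tilde l_{i-1}+t}-u_{\tilde l_{i-1}+s+1,\tilde l_{i-1}+s+1}$ are mutually orthogonal and still span $U_i$, and keeping straight the different norms of $u_{kk}$ versus $u_{ij}$ and $w_{ij}$ coming from the inner product \eqref{11}. Everything else is routine: the centralizer computation, the matching of basis vectors against Corollary~\ref{2.8}, and the dimension count.
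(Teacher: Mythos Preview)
Your proposal is correct and, in fact, supplies considerably more than the paper does: the paper omits the proof of this proposition entirely, stating only that ``the proof is a lengthy but straightforward calculation and we omit it.'' Your approach---computing $\mathfrak k_\Theta$ directly from the commutation relations $A\Lambda_\Theta=\Lambda_\Theta A$, $B\Lambda_\Theta=-\Lambda_\Theta B$, then sorting the standard basis vectors into the summands of Corollary~\ref{2.8} and Gram--Schmidt orthogonalizing only the diagonal directions $e_t=u_{\tilde l_{i-1}+t,\tilde l_{i-1}+t}$---is exactly the straightforward calculation the authors have in mind, and all your verifications (the orthogonality of the $d_s$, the span of $U_i$, the dimension count) go through as you describe.
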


The proof is a lengthy but straightforward calculation and we omit it.

Now, we can obtain a description of invariant metrics on the flags of $C_l,$ $l\neq 4.$

\begin{prop}\label{2.10}
Every invariant metric $A$ on a flag of $C_l,$ $l\neq 4$ is written in the basis \eqref{14} in the following form:

$a)$ If $\alpha_l\notin\Theta$ then

\ \ \ \ \ \ \ \ \ \ \ \ \ \ \ \ \ \ \ \ \ \ \ \ \ \ \ \ \ \ \ \ \ $[A|_{M_0}]_{\mathcal{B}_0}=\left(\begin{array}{ccccc}
\mu^{(0)}_1 & a_{21} & a_{31} & \dots & a_{r1}\\
a_{21} & \mu^{(0)}_2 & a_{32} & \dots & a_{r2} \\
a_{31} & a_{32} & \mu^{(0)}_3 & \dots & a_{r3} \\
\vdots & \vdots & \vdots & \ddots & \vdots \\
a_{r1} & a_{r2} & a_{r3} & \dots & \mu^{(0)}_r \\
\end{array}\right),$

\ \ \ \ \ \ \ \ \ \ \ \ \ \ \ \ \ \ \ \ \ \ \ \ \ \ \ \ \ \ \ \ \ $[A|_{M_{mn}}]_{\mathcal{B}_{mn}}=\left(\begin{array}{cc}
\mu^{(mn)}_1\text{I}_{W_{mn}} & b_{mn}\text{I} \\
 & \\
b_{mn}\text{I} & \mu^{(mn)}_2\text{I}_{U_{mn}} \\
\end{array}\right),$ $1\leq n<m\leq r,$

\

\ \ \ \ \ \ \ \ \ \ \ \ \ \ \ \ \ \ \ \ \ \ \ \ \ \ \ \ \ \ \ \ \ $[A|_{M_i}]_{\mathcal{B}_{i}}=\mu^{(i)}\text
{I}_{U_i},$\  $1\leq i\leq r$ and $l_i>1.$

$b)$ If $\alpha_l\in\Theta$ then 

\ \ \ \ \ \ \ \ \ \ \ \ \ \ \ \ \ \ \ \ \ \ \ \ \ \ \ \ $[A|_{M_0}]_{\mathcal{B}_0}=\left(\begin{array}{ccccc}
\mu^{(0)}_1 & a_{21} & a_{31} & \dots & a_{r-1,1}\\
a_{21} & \mu^{(0)}_2 & a_{32} & \dots & a_{r-1,2} \\
a_{31} & a_{32} & \mu^{(0)}_3 & \dots & a_{r-1,3} \\
\vdots & \vdots & \vdots & \ddots & \vdots \\
a_{r-1,1} & a_{r-1,2} & a_{r-1,3} & \dots & \mu^{(0)}_{r-1} \\
\end{array}\right),$

\

\ \ \ \ \ \ \ \ \ \ \ \ \ \ \ \ \ \ \ \ \ \ \ \ \ \ \ \ $[A|_{M_{mn}}]_{\mathcal{B}_{mn}}=\left(\begin{array}{cc}
\mu^{(mn)}_1\text{I}_{W_{mn}} & b_{mn}\text{I} \\
 & \\
b_{mn}\text{I} & \mu^{(mn)}_2\text{I}_{U_{mn}} \\
\end{array}\right),$ $1\leq n<m\leq r-1,$

\

\ \ \ \ \ \ \ \ \ \ \ \ \ \ \ \ \ \ \ \ \ \ \ \ \ \ \ \ $[A|_{M_{rn}}]_{\mathcal{B}_{rn}}=\mu^{(rn)}\text{I}_{M_{rn}},$ $1\leq n\leq r-1,$

\

\ \ \ \ \ \ \ \ \ \ \ \ \ \ \ \ \ \ \ \ \ \ \ \ \ \ \ \ $[A|_{M_i}]_{\mathcal{B}_{i}}=\mu^{(i)}\text{I}_{U_i},$\  $1\leq i\leq r-1$ and $l_i>1.$
\end{prop}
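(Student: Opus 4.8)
The plan is to run the scheme behind Proposition~\ref{2.6}, organized around the isotypical decomposition of Corollary~\ref{2.8}. By \eqref{5}, any metric operator $A$ is block-diagonal with respect to $\mathfrak{m}_\Theta=M_0\oplus\bigoplus_{1\le n<m\le r}M_{mn}\oplus\bigoplus_{l_i>1}M_i$ (with the $M_{rn}$ inserted and the index ranges shrunk by one when $\alpha_l\in\Theta$), so it suffices to describe $A$ on each summand separately; positivity of every parameter is then forced by positive-definiteness of $A$.

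The irreducible summands are immediate: for $M_i=U_i$ and, when $\alpha_l\in\Theta$, for $M_{rn}$, Proposition~\ref{2.7} says the $K_\Theta$-action is irreducible, and a $(\cdot,\cdot)$-self-adjoint operator commuting with it has a real eigenvalue whose eigenspace --- being $K_\Theta$-invariant and nonzero --- is the whole module; hence $A|_{U_i}=\mu^{(i)}\text{I}$ and $A|_{M_{rn}}=\mu^{(rn)}\text{I}$. The summand $M_0=\bigoplus_iV_i$ is the genuinely new feature compared with $B_l$, since now the $V_i$ are one-dimensional \emph{and} mutually equivalent. Here I would first check that $K_\Theta$ acts trivially on $M_0$: for $k=\diag(P,P)\in K_\Theta$ with $P=\diag(P_1,\dots,P_r)$, $P_i\in O(l_i)$, a direct computation gives $\Ad(k)\bigl(\sum_{t=1}^{l_i}u_{\tilde{l}_{i-1}+t,\tilde{l}_{i-1}+t}\bigr)=\sum_{t=1}^{l_i}u_{\tilde{l}_{i-1}+t,\tilde{l}_{i-1}+t}$, because $\sum_t p^i_{et}p^i_{ft}=(P_iP_i^T)_{ef}=\delta_{ef}$; and the $U(l_r)$-factor present when $\alpha_l\in\Theta$ fixes each $V_i$, $i<r$, for trivial reasons. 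Thus $A|_{M_0}$ may be any self-adjoint positive operator on $M_0$, i.e.\ an arbitrary symmetric positive-definite matrix in the basis $\mathcal{B}_0$; labelling its diagonal by $\mu^{(0)}_i$ and its off-diagonal by $a_{mn}$ gives the stated block.

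The main work --- and the step I expect to be the real obstacle --- is the summand $M_{mn}=W_{mn}\oplus U_{mn}$, with $W_{mn}$ and $U_{mn}$ equivalent and irreducible. By \eqref{6} the corresponding block has the form $\bigl(\begin{smallmatrix}\mu^{(mn)}_1\text{I}_{W_{mn}} & B^{T}\\ B & \mu^{(mn)}_2\text{I}_{U_{mn}}\end{smallmatrix}\bigr)$ for some linear $B\colon W_{mn}\to U_{mn}$, and I must show $B=b_{mn}\text{I}$ in $\mathcal{B}_{mn}$. Since $w_{\tilde{l}_{m-1}+s,\tilde{l}_{n-1}+t}\mapsto u_{\tilde{l}_{m-1}+s,\tilde{l}_{n-1}+t}$ is $K_\Theta$-equivariant (the $\Ad(k)$-formulas for the $w$'s and the $u$'s coincide), this reduces to showing that the commutant of the $K_\Theta$-representation on $W_{mn}\cong\mathbb{R}^{l_m}\otimes\mathbb{R}^{l_n}$ is $\mathbb{R}$; concretely, one imposes $A\circ\Ad(k)=\Ad(k)\circ A$ and specializes $k$ to diagonal sign matrices and then to suitable rotation/permutation matrices $P_i$, exactly as in the proof of Proposition~\ref{2.6}, first to annihilate all entries $b^{st}_{\tilde{e}\tilde{f}}$ with $(s,t)\neq(\tilde{e},\tilde{f})$ and then to equate the rest to a common value $b_{mn}$. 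The one point needing extra care relative to $B_l$, $l\ge5$, is the low-rank flags of $C_3$, where some $l_i$ equal $1$ or $2$; there the standard representation of $O(l_i)$ is still absolutely irreducible, $\mathbb{R}^{l_m}\otimes\mathbb{R}^{l_n}$ remains absolutely irreducible, and the conclusion $B=b_{mn}\text{I}$ persists. The cases with $\alpha_l\in\Theta$ go through verbatim with the index ranges of Corollary~\ref{2.8}.
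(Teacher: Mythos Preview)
Your proof is correct and follows essentially the paper's route: reduce via \eqref{5}--\eqref{6} to the block $A|_{M_{mn}}$, then specialize $\Ad(k)$ to diagonal sign matrices and rotations in $K_\Theta$ to force $B=b_{mn}\text{I}$; your explicit check that $K_\Theta$ acts trivially on $M_0$ is a nice addition the paper leaves implicit. One small remark: your low-rank worry for $C_3$ is unnecessary, since here $K_\Theta\cong O(l_1)\times\cdots\times O(l_r)$ carries no determinant-one constraint, so $P_m=\diag(1,\dots,-1,\dots,1)$ with $P_i=\text{I}$ for $i\neq m$ already lies in $K_\Theta$, making the argument actually simpler than the $B_l$ analogue in Proposition~\ref{2.6} that you invoke.
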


\begin{proof} \textit{Case 1.} $\alpha_l\notin\Theta.$

Because of equations \eqref{5} and \eqref{6}, it is enough to prove the result for $A|_{M_{mn}}.$ In fact, we know $[A|_{M_{mn}}]_{\mathcal{B}_{mn}}$ has the form 

\begin{center}
$[A|_{M_{mn}}]_{\mathcal{B}_{mn}}=\left(\begin{array}{cc}
\mu^{(mn)}_1\text{I}_{W_{mn}} & B^T \\
 & \\
B & \mu^{(mn)}_2\text{I}_{U_{mn}} \\
\end{array}\right).$
\end{center}

Given $k\in K_\Theta\stackrel{\text{dif.}}{\approx}O(l_1)\times...\times O(l_r),$ we have that $k$ has the form

\begin{center}
$k=\left(\begin{array}{cc}
P & \textbf{0} \\
\textbf{0} & P \\
\end{array}\right),$
\end{center}
 
where $P$ is a block diagonal matrix 

\begin{center}
$P=\left(\begin{array}{cccc}
P_1 & \textbf{0} & \dots & \textbf{0} \\
\textbf{0} & P_2 & \dots & \textbf{0} \\
\vdots & \vdots & \ddots & \vdots \\
\textbf{0} & \textbf{0} & \dots & P_r \\
\end{array}\right),$ $P_i\in O(l_i)$ for $i=1,...,r.$
\end{center}

Writing $P_i=\left(p^{i}_{st}\right)_{l_i\times l_i},$ we can easily verify that for any pair $(s,t)$ with $1\leq s\leq l_m,$ $1\leq t\leq l_n$
\begin{equation}\label{15}
\begin{array}{ccl}
\Ad(k)w_{\tilde{l}_{m-1}+s,\tilde{l}_{n-1}+t} & = & \displaystyle \sum\limits_{e=1}^{l_m}\sum\limits_{f=1}^{l_n}p_{es}^mp_{ft}^n\ w_{\tilde{l}_{m-1}+e,\tilde{l}_{n-1}+f}
\end{array}
\end{equation}
and
\begin{equation}\label{16}
\begin{array}{ccl}
\Ad(k)u_{\tilde{l}_{m-1}+s,\tilde{l}_{n-1}+t} & = & \displaystyle \sum\limits_{e=1}^{l_m}\sum\limits_{f=1}^{l_n}p_{es}^mp_{ft}^n\ u_{\tilde{l}_{m-1}+e,\tilde{l}_{n-1}+f}.
\end{array}
\end{equation}

Because of the form of $A|_{M_{mn}},$ we also have that for every $(s,t)$ there exist a set of real numbers $\{b^{st}_{ef}:1\leq e\leq l_m,\ 1\leq f\leq l_n\}$ such that  
\begin{equation}\label{17}
\begin{array}{ccl}
Aw_{\tilde{l}_{m-1}+s,\tilde{l}_{n-1}+t} & = & \mu_1^{(mn)}w_{\tilde{l}_{m-1}+s,\tilde{l}_{n-1}+t}+ \displaystyle \sum\limits_{e=1}^{l_m}\sum\limits_{f=1}^{l_n}b^{st}_{ef}\ u_{\tilde{l}_{m-1}+e,\tilde{l}_{n-1}+f}.
\end{array}
\end{equation}

From \eqref{15}, \eqref{16} and \eqref{17} we get
\begin{center}
$\begin{array}{ccl}
\Ad(k)\circ Aw_{\tilde{l}_{m-1}+s,\tilde{l}_{n-1}+t} & = & \displaystyle \mu_1^{(mn)}\sum\limits_{e=1}^{l_m}\sum\limits_{f=1}^{l_n}p^{m}_{es}p^{n}_{ft}\ w_{\tilde{l}_{m-1}+e,\tilde{l}_{n-1}+f}\\
 & & \displaystyle+\sum\limits_{\tilde{e},e=1}^{l_m}\sum\limits_{\tilde{f},f=1}^{l_n}b^{st}_{ef}p^{m}_{\tilde{e}e}p^{n}_{\tilde{f}f}\ u_{\tilde{l}_{m-1}+\tilde{e},\tilde{l}_{n-1}+\tilde{f}}
\end{array}$
\end{center}
and
\begin{center}
$\begin{array}{ccl}
A\circ \Ad(k)w_{\tilde{l}_{m-1}+s,\tilde{l}_{n-1}+t} & = & \displaystyle \mu_1^{(mn)}\sum\limits_{e=1}^{l_m}\sum\limits_{f=1}^{l_n}p^{m}_{es}p^{n}_{ft}\ w_{\tilde{l}_{m-1}+e,\tilde{l}_{n-1}+f}\\
 & & \displaystyle+\sum\limits_{\tilde{e},e=1}^{l_m}\sum\limits_{\tilde{f},f=1}^{l_n}b^{ef}_{\tilde{e}\tilde{f}}p^{m}_{es}p^{n}_{ft}\ u_{\tilde{l}_{m-1}+\tilde{e},\tilde{l}_{n-1}+\tilde{f}}.
\end{array}$
\end{center}

Since $A\circ \Ad(k)=\Ad(k)\circ A,$ then 
\begin{equation}\label{18}
\displaystyle\sum\limits_{e=1}^{l_m}\sum\limits_{f=1}^{l_n}b^{st}_{ef}p^{m}_{\tilde{e}e}p^{n}_{\tilde{f}f}=\sum\limits_{e=1}^{l_m}\sum\limits_{f=1}^{l_n}b^{ef}_{\tilde{e}\tilde{f}}p^{m}_{es}p^{n}_{ft},
\end{equation}

for $1\leq s,\tilde{e}\leq l_m,$ and $1\leq t,\tilde{f}\leq l_n.$ Fixing $s,t,\tilde{e},\tilde{f},$ we shall show that $b^{st}_{\tilde{e}\tilde{f}}=0$ if $(s,t)\neq(\tilde{e},\tilde{f}).$ The equation \eqref{18} is true for every $k\in K_\Theta$ in particular, if $P_m=\diag(1,..,1,-1,1,...,1)$, with $-1$ in the $(\tilde{e},\tilde{e})-$entry and $P_i=\text{I}$, $i\neq m$ we have

\begin{center}
$-b^{st}_{\tilde{e}\tilde{f}}=b^{st}_{\tilde{e}\tilde{f}}p^m_{ss},$
\end{center}

since $p^m_{ss}=1$ for $s\neq\tilde{e},$ then $b^{st}_{\tilde{e}\tilde{f}}=0$  if $s\neq\tilde{e}.$ By taking $P_n=\diag(1,...,1,-1,1,...,1),$ with $-1$ in the $(\tilde{f},\tilde{f})-$entry and $P_i=\text{I},$ $i\neq n$ we have

\begin{center}
$-b^{st}_{\tilde{e}\tilde{f}}=b^{st}_{\tilde{e}\tilde{f}}p^n_{tt},$
\end{center}

again, $p^{n}_{tt}=1$ for $t\neq\tilde{f}$,then $b^{st}_{\tilde{e}\tilde{f}}=0$  if $t\neq\tilde{f}.$ We conclude that $b^{st}_{\tilde{e}\tilde{f}}=0$ of $(s,t)\neq(\tilde{e},\tilde{f}),$ whereupon equation \eqref{18} becomes
\begin{equation}\label{19}
b^{st}_{st}p^m_{\tilde{e}s}p^n_{\tilde{f}t}=b^{\tilde{e}\tilde{f}}_{\tilde{e}\tilde{f}}p^m_{\tilde{e}s}p^n_{\tilde{f}t},
\end{equation}

by taking $P_m\in O(l_m)$ with non-zero $(\tilde{e},s)-$entry and $P_n\in O(l_n)$ with non-zero $(\tilde{f},t)-$entry, we have $b^{st}_{st}=b^{\tilde{e}\tilde{f}}_{\tilde{e}\tilde{f}}=:b_{mn}$ for all $s,t,\tilde{e},\tilde{f}.$ Equation \eqref{17} implies
\begin{equation}\label{20}
\begin{array}{ccl}
Aw_{\tilde{l}_{m-1}+s,\tilde{l}_{n-1}+t} & = & \mu_1^{(mn)}w_{\tilde{l}_{m-1}+s,\tilde{l}_{n-1}+t}+ b_{mn}\ u_{\tilde{l}_{m-1}+s,\tilde{l}_{n-1}+t}.
\end{array}
\end{equation}

as we wanted.

The same argument works for the case $\alpha_l\in\Theta$ taking into account that 

\begin{center}
$O(l_1)\times...\times O(l_r)\subseteq O(l_1)\times...\times O(l_{r-1})\times U(l_r)=K_\Theta$
\end{center}
\end{proof}

When $l = 4$, in addition to the subspaces described in Proposition \ref{2.7}, we have more equivalent subspaces for some subsets $\Theta.$ The table below shows the equivalence classes for the flags $\mathbb{F}_\Theta$ of $C_4$ where there exist equivalences different to those presented in Proposition \ref{2.7}.

\begin{center}
$\begin{array}{|c|c|c|c|c|c|}
\hline
\Theta & l_1 & l_2 & l_3 & l_4 & \text{Equivalence classes} \\ \hline
\emptyset & 1 & 1 & 1 & 1 & \{V_1,V_2,V_3,V_4\},$ $\{W_{21},W_{43},U_{21},U_{43}\}, \{W_{31},W_{42},U_{31},U_{42}\},$ $\{W_{32},W_{41},U_{32},U_{41}\}\\ \hline
\{\alpha_1\} & 2 & 1 & 1 &  & \{V_1,V_2,V_3\},\{W_{21},W_{31},U_{21},U_{31}\},\{W_{32},U_{32}\},\{U_1\} \\ \hline 
\{\alpha_2\} & 1 & 2 & 1 &  & \{V_1,V_2,V_3\},\{W_{21},W_{32},U_{21},U_{32}\},\{W_{31},U_{31}\},\{U_2\} \\ \hline
\{\alpha_3\} & 1 & 1 & 2 &  & \{V_1,V_2,V_3\},\{W_{31},W_{32},U_{31},U_{32}\},\{W_{21},U_{21}\},\{U_3\} \\ \hline
\end{array}$

\small Table 1.
\end{center}

Therefore, we have the isotypical summands and adapted, ordered, $(\cdot,\cdot)-$orthogonal bases for every $\Theta$ in Table 1. Let us analyse case by case: 

$\bullet$ $\Theta=\emptyset,$ $\mathbb{F}_{\emptyset}=U(4)/(O(1)\times O(1)\times O(1)\times O(1)).$ 

The isotypical summands are

\begin{center}
$\begin{array}{lcl}
M_0 & = & V_1\oplus V_2\oplus V_3\oplus V_4\\
N_1 & = & W_{21}\oplus W_{43}\oplus U_{21}\oplus U_{43}\\
N_2 & = & W_{31}\oplus W_{42}\oplus U_{31}\oplus U_{42}\\
N_3 & = & W_{32}\oplus W_{41}\oplus U_{32}\oplus U_{41}\\
\end{array}$
\end{center}

and $\mathcal{B}_\emptyset=\mathcal{B}_0\cup\mathcal{B}_1\cup\mathcal{B}_2\cup\mathcal{B}_3$ is an adapted basis, where

\begin{center}
$\begin{array}{lcl}
\mathcal{B}_0 & = & \{u_{11},u_{22},u_{33},u_{44}\}\\
\mathcal{B}_1 & = & \{w_{21},w_{43},u_{21},u_{43}\}\\
\mathcal{B}_2 & = & \{w_{31},w_{42},u_{31},u_{42}\}\\
\mathcal{B}_3 & = & \{w_{32},w_{41},u_{32},u_{41}\}\\
\end{array}$
\end{center}

$\bullet$ $\Theta=\{\alpha_1\},$ $\mathbb{F}_{\{\alpha_1\}}=U(4)/(O(2)\times O(1)\times O(1)).$ 

The isotypical summands are

\begin{center}
$\begin{array}{lcl}
M_0 & = & V_1\oplus V_2\oplus V_3\\
M_1 & = & U_1\\
M & = & W_{32}\oplus U_{32}\\
N & = & W_{21}\oplus W_{31}\oplus U_{21}\oplus U_{31}\\
\end{array}$
\end{center}

and $\mathcal{B}_{\{\alpha_1\}}=\mathcal{B}_0\cup\mathcal{B}_1\cup\mathcal{B}_M\cup\mathcal{B}_N$ is an adapted basis, where

\begin{center}
$\begin{array}{lcl}
\mathcal{B}_0 & = & \left\{\frac{1}{\sqrt{2}}(u_{11}+u_{22}),u_{33},u_{44}\right\}\\
\mathcal{B}_1 & = & \{u_{21}\}\\
\mathcal{B}_M & = & \{w_{43},u_{43}\}\\
\mathcal{B}_N & = & \{w_{31},w_{32},w_{41},w_{42},u_{31},u_{32},u_{41},u_{42}\}\\
\end{array}$
\end{center}

$\bullet$ $\Theta=\{\alpha_2\},$ $\mathbb{F}_{\{\alpha_2\}}=U(4)/(O(1)\times O(2)\times O(1)).$ 

The isotypical summands are

\begin{center}
$\begin{array}{lcl}
M_0 & = & V_1\oplus V_2\oplus V_3\\
M_2 & = & U_2\\
M & = & W_{31}\oplus U_{31}\\
N & = & W_{21}\oplus W_{32}\oplus U_{21}\oplus U_{32}\\
\end{array}$
\end{center}

and $\mathcal{B}_{\{\alpha_2\}}=\mathcal{B}_0\cup\mathcal{B}_2\cup\mathcal{B}_M\cup\mathcal{B}_N$ is an adapted basis, where

\begin{center}
$\begin{array}{lcl}
\mathcal{B}_0 & = & \left\{u_{11},\frac{1}{\sqrt{2}}(u_{22}+u_{33}),u_{44}\right\}\\
\mathcal{B}_2 & = & \{u_{32}\}\\
\mathcal{B}_M & = & \{w_{41},u_{41}\}\\
\mathcal{B}_N & = & \{w_{21},w_{31},w_{42},w_{43},u_{21},u_{31},u_{42},u_{43}\}\\
\end{array}$
\end{center}

$\bullet$ $\Theta=\{\alpha_3\},$ $\mathbb{F}_{\{\alpha_3\}}=U(4)/(O(1)\times O(1)\times O(2)).$ 

The isotypical summands are

\begin{center}
$\begin{array}{lcl}
M_0 & = & V_1\oplus V_2\oplus V_3\\
M_3 & = & U_3\\
M & = & W_{21}\oplus U_{21}\\
N & = & W_{31}\oplus W_{32}\oplus U_{31}\oplus U_{32}\\
\end{array}$
\end{center}

and $\mathcal{B}_{\{\alpha_3\}}=\mathcal{B}_0\cup\mathcal{B}_3\cup\mathcal{B}_M\cup\mathcal{B}_N$ is an adapted basis, where

\begin{center}
$\begin{array}{lcl}
\mathcal{B}_0 & = & \left\{u_{11},u_{22},\frac{1}{\sqrt{2}}(u_{33}+u_{44})\right\}\\
\mathcal{B}_3 & = & \{u_{43}\}\\
\mathcal{B}_M & = & \{w_{21},u_{21}\}\\
\mathcal{B}_N & = & \{w_{31},w_{41},w_{32},w_{42},u_{31},u_{41},u_{32},u_{42}\}.\\
\end{array}$
\end{center}

\begin{prop}\label{2.11} Let $A$ be an invariant metric  on a flag $\mathbb{F}_\Theta$ of $C_4.$

$a)$ If $\Theta=\emptyset$, then $A$ is written in the basis $\mathcal{B}_\emptyset$ as

\ \ \ \ \ \ \ \ \ \ \ \ \ \ \ \ \ \ \ \ \ \ \ \ \ \ \ \ \ \ \ \ \ \ \ \ \ $[A|_{M_0}]_{\mathcal{B}_0}=\left(\begin{array}{cccc}
\mu^{(0)}_1 & a_{21} & a_{31} & a_{41}\\
a_{21} & \mu^{(0)}_2 & a_{32} & a_{42}\\
a_{31} & a_{32} & \mu^{(0)}_3 & a_{43}\\
a_{41} & a_{42} & a_{43} & \mu^{(0)}_4\\
\end{array}\right),$

\

\ \ \ \ \ \ \ \ \ \ \ \ \ \ \ \ \ \ \ \ \ \ \ \ \ \ \ \ \ \ \ \ \ \ \ \ \ $[A|_{N_i}]_{\mathcal{B}_i}=\left(\begin{array}{cccc}
\mu^{(i)}_1 & 0 & b^{(i)}_{1} & 0\\
0 & \mu^{(i)}_2 & 0 & b^{(i)}_{2} \\
b^{(i)}_{1} & 0 & \mu^{(i)}_3 & 0 \\
0 & b^{(i)}_{2} & 0 & \mu^{(i)}_4 \\
\end{array}\right),$ \ $i=1,2,3.$

\

$b)$ If $\Theta=\{\alpha_1\},\{\alpha_2\},$ or $\{\alpha_3\},$ then $A$ is written in the basis $\mathcal{B}_\Theta$ as

\ \ \ \ \ \ \ \ \ \ \ \ \ \ \ \ \ \ \ \ \ \ \ \ \ \ \ \ \ \ \ \ \ \ \ \ \ $[A|_{M_0}]_{\mathcal{B}_0}=\left(\begin{array}{ccc}
\mu^{(0)}_1 & a_{21} & a_{31} \\
a_{21} & \mu^{(0)}_2 & a_{32} \\
a_{31} & a_{32} & \mu^{(0)}_3 \\
\end{array}\right),$

\

\ \ \ \ \ \ \ \ \ \ \ \ \ \ \ \ \ \ \ \ \ \ \ \ \ \ \ \ \ \ \ \ \ \ \ \ \ $A|_{M_i}=\mu^{(i)}\text{I}_{U_i},$ $i=1,2,3.$ (if $\Theta=\{\alpha_i\}$)

\

\ \ \ \ \ \ \ \ \ \ \ \ \ \ \ \ \ \ \ \ \ \ \ \ \ \ \ \ \ \ \ \ \ \ \ \ \ $[A|_{M}]_{\mathcal{B}_M}=\left(\begin{array}{cc}
\mu^M_1 & b_M\\
b_M & \mu^M_2\\
\end{array}\right),$

\

\ \ \ \ \ \ \ \ \ \ \ \ \ \ \ \ \ \ \ \ \ \ \ \ \ \ \ \ \ \ \ \ \ \ \ \ \ $[A|_{N}]_{\mathcal{B}_N}=\left(\begin{array}{cccccccc}
\mu^N_1 & 0 & 0 & 0 & b_1^N & 0 & 0 & 0\\
0 & \mu^N_1 & 0 & 0 & 0 & b_1^N & 0 & 0\\
0 & 0 & \mu^N_2 & 0 & 0 & 0 & b_2^N & 0\\
0 & 0 & 0 & \mu^N_2 & 0 & 0 & 0 & b_2^N\\
b_1^N & 0 & 0 & 0 & \mu^N_3 & 0 & 0 & 0\\
0 & b_1^N & 0 & 0 & 0 & \mu^N_3 & 0 & 0\\
0 & 0 & b_2^N & 0 & 0 & 0 & \mu^N_4 & 0\\
0 & 0 & 0 & b_2^N & 0 & 0 & 0 & \mu^N_4\\
\end{array}\right).$
\end{prop}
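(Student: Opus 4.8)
The plan is to mimic the proofs of Propositions \ref{2.3} and \ref{2.10}. By \eqref{5} and \eqref{6}, an invariant metric $A$ on $\mathbb{F}_\Theta$ is block-diagonal along the isotypical summands listed above, and on each summand it has the normal form \eqref{6}: scalars $\mu_j\mathrm{I}$ on the irreducible pieces, with off-diagonal sub-blocks still to be determined. So the whole problem reduces to computing, for each of the sets $\Theta=\emptyset,\{\alpha_1\},\{\alpha_2\},\{\alpha_3\}$, those off-diagonal sub-blocks by imposing $A\circ\Ad(k)=\Ad(k)\circ A$ for well-chosen $k\in K_\Theta$, read off in the adapted bases fixed above.

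First I would clear the cheap blocks. On $M_0=\bigoplus_i V_i$ each generator $u_{\tilde l_{i-1}+1,\tilde l_{i-1}+1}+\cdots+u_{\tilde l_i,\tilde l_i}$ is fixed by $\Ad(k)$ (a one-line check with $k=\mathrm{diag}(P,P)$), so all the $V_i$ carry the trivial character and \eqref{6} already gives the symmetric matrix $(a_{ij})$ with diagonal $\mu^{(0)}_i$; positive-definiteness of $A$ is the only remaining constraint. The summands $M_i=U_i$ are $\Ad(K_\Theta)$-irreducible, hence $A|_{M_i}=\mu^{(i)}\mathrm{I}$. When $\Theta=\{\alpha_j\}$, the summand $M=W_\bullet\oplus U_\bullet$ is a sum of two equivalent one-dimensional modules, so \eqref{6} directly yields the general symmetric $2\times 2$ block with entries $\mu^M_1,\mu^M_2,b_M$, with nothing left to prove.

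The real work is confined to the summands assembling more than two pieces: $N$ for $\Theta=\{\alpha_j\}$ and the $N_i$ for $\Theta=\emptyset$. For $\Theta=\emptyset$ the argument is short: $K_\emptyset\cong(\mathbb{Z}/2)^4$ acts diagonally in each $\mathcal{B}_i$, and on $N_1=\{w_{21},w_{43},u_{21},u_{43}\}$ the element $\mathrm{diag}(\epsilon_1,\dots,\epsilon_4,\epsilon_1,\dots,\epsilon_4)$ acts as $\mathrm{diag}(\epsilon_1\epsilon_2,\epsilon_3\epsilon_4,\epsilon_1\epsilon_2,\epsilon_3\epsilon_4)$ (and analogously for $N_2,N_3$ with their index pairs); hence an entry of $[A|_{N_i}]_{\mathcal{B}_i}$ can be nonzero only between basis vectors on which every such element acts by the same sign, i.e.\ only on the diagonal and between a $w$ and a $u$ sharing an index pair, which is exactly the claimed pattern with $b^{(i)}_1,b^{(i)}_2$ the two free entries. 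For $\Theta=\{\alpha_1\}$ (the cases $\{\alpha_2\},\{\alpha_3\}$ being symmetric), with $N=W_{21}\oplus W_{31}\oplus U_{21}\oplus U_{31}$, I would first take $k=\mathrm{diag}(P,P)$ with $P=\mathrm{diag}(\mathrm{I}_2,-1,1)\in O(2)\times O(1)\times O(1)$: then $\Ad(k)$ acts as $\mathrm{diag}(-\mathrm{I},\mathrm{I},-\mathrm{I},\mathrm{I})$ on $N$ in the block order $(W_{21},W_{31},U_{21},U_{31})$, which forces the four sub-blocks between $W_{21}\oplus U_{21}$ and $W_{31}\oplus U_{31}$ to vanish; then, letting $P_1$ run over $O(2)$, the computation of Proposition \ref{2.10} (equations \eqref{15}--\eqref{20}) applied to each surviving pair $\{W_{21},U_{21}\}$, $\{W_{31},U_{31}\}$ shows that its remaining off-diagonal sub-block is a scalar multiple $b^N_1\mathrm{I}$, $b^N_2\mathrm{I}$ of the identity. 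This is the stated $8\times 8$ matrix.

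I expect the only genuine difficulty to be bookkeeping in the $8\times 8$ block: one must keep straight which of the two indices of each $w$- or $u$-vector lies in the $O(2)$-factor and which in an $O(1)$-factor, so that the sign choices really decouple $W_{21}$ from $W_{31}$ and the reduction to Proposition \ref{2.10} is legitimate. Everything else is either a repetition of earlier computations or immediate from \eqref{6}, and since none of the four sets $\Theta$ contains $\alpha_4$, the case $\alpha_l\in\Theta$ does not arise.
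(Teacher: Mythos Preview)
Your plan is correct and follows essentially the same route as the paper's proof. For $\Theta=\emptyset$ both you and the paper use the sign action of $K_\emptyset\cong(\mathbb{Z}/2)^4$ to kill all off-diagonal entries in $[A|_{N_i}]_{\mathcal{B}_i}$ except those between $w$ and $u$ with the same index pair; your character formulation is just a cleaner phrasing of the paper's single explicit commutator computation. For $\Theta=\{\alpha_1\}$ the paper proceeds by three brute-force $8\times 8$ matrix commutations (with $P_1=\diag(-1,1)$, then $P=\diag(\mathrm{I}_2,-1,1)$, then a rotation in $O(2)$), while you first use $P=\diag(\mathrm{I}_2,-1,1)$ to decouple $W_{21}\oplus U_{21}$ from $W_{31}\oplus U_{31}$ and then observe that the remaining constraint on each $4\times 4$ piece is exactly the computation \eqref{15}--\eqref{20}; these are the same elements of $K_\Theta$ and the same computations, only reordered and packaged via the earlier proposition. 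Your caveat about bookkeeping is the right one: the reduction to Proposition~\ref{2.10} is legitimate precisely because on each pair $(W_{m1},U_{m1})$ the $O(2)$-factor acts through the index in $\{1,2\}$, so the diagonal sign and the rotation in $O(2)$ suffice to run \eqref{18}--\eqref{19} verbatim.
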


\begin{proof}
$a)$ For each $i\in\{1,2,3\}$ we write $A|_{N_i}$ in the basis $\mathcal{B}_i$ in the form

\ \ \ \ \ \ \ \ \ \ \ \ \ \ \ \ \ \ \ \ \ \ \ \ \ \ \ \ \ \ \ \ \ \ \ \ \ $[A|_{N_i}]_{\mathcal{B}_i}=\left(\begin{array}{cccc}
\mu^{(i)}_1 & b^{(i)}_{21} & b^{(i)}_{31} & b^{(i)}_{41}\\
b^{(i)}_{21} & \mu^{(i)}_2 & b^{(i)}_{32} & b^{(i)}_{42} \\
b^{(i)}_{31} & b^{(i)}_{32} & \mu^{(i)}_3 & b^{(i)}_{43} \\
b^{(i)}_{41} & b^{(i)}_{42} & b^{(i)}_{43} & \mu^{(i)}_4 \\
\end{array}\right),$

Given $k\in K_\emptyset\stackrel{\text{dif.}}{\approx}O(1)\times O(1)\times O(1)\times O(1),$ we have that $k$ has the form

\begin{center}
$k=\left(\begin{array}{cc}
P & \textbf{0} \\
\textbf{0} & P \\
\end{array}\right),$
\end{center}
 
where $P=\diag(p_1,p_2,p_3,p_4),$ $p_i=\pm 1,$ $i=1,2,3,4.$ It is easy to see that 

\ \ \ \ \ \ \ \ \ \ \ \ \ \ \ \ \ \ \ \ \ \ \ \ \ \ \ \ \ \ \ \ \ $[\Ad(k)|_{N_i}]_{\mathcal{B}_i}=\left(\begin{array}{cccc}
p_{i_1}p_{i_2} & 0 & 0 & 0\\
0 & p_{i_3}p_{i_4} & 0 & 0\\
0 & 0 & p_{i_1}p_{i_2} & 0\\
0 & 0 & 0 & p_{i_3}p_{i_4}\\
\end{array}\right),$

where $\{i_1,i_2,i_3,i_4\}=\{1,2,3,4\}.$ Since $A$ commutes with $\Ad(k)$ for all $k\in K_\emptyset$, then 

\ \ \ \ \ \ \ \ \ \ \ \ \ \ \ \ \ \ \ \ \ \ \ \ \ \ \ \ \ \ \ \ \ $[A|_{N_i}]_{\mathcal{B}_i}[\Ad(k)|_{N_i}]_{\mathcal{B}_i}-[\Ad(k)|_{N_i}]_{\mathcal{B}_i}[A|_{N_i}]_{\mathcal{B}_i}=\textbf{0}$\\

\ \ \ \ \ \ \ \ \ \ \ \ \ \ \ \ \ \ \ \ \ \ \ \ \ \ $\Longleftrightarrow$ $(p_{i_3}p_{i_4}-p_{i_1}p_{i_2})\displaystyle\left(\begin{array}{cccc}
0 & -b^{(i)}_{21} & 0 & -b^{(i)}_{41} \\
b^{(i)}_{21} & 0 & b^{(i)}_{32} & 0 \\
0 & -b^{(i)}_{32} & 0 & -b^{(i)}_{43} \\
b^{(i)}_{41} & 0 & b^{(i)}_{43} & 0 \\
\end{array}\right)=\textbf{0}.$

Taking $-p_{i_1}=1=p_{i_2}=p_{i_3}=p_{i_4},$ we can conclude that $b^{(i)}_{ab}=0$ if $(a,b)\notin\{(3,1),(4,2)\}.$ Defining $b^{(i)}_{1}:=b^{(i)}_{31}$ and $b^{(i)}_2:=b^{(i)}_{42}$, we have the result.

$b)$ Let us consider $\Theta=\{\alpha_1\}.$ Again, it is enough to show the result for $A|_N.$ We know that $A|_N$ is written in the basis $\mathcal{B}_N$ in the form 

\ \ \ \ \ \ \ \ \ \ \ \ \ \ \ \ \ \ \ \ \ \ \ \ \ \ \ \ \ \ \ \ \ $[A|_{N}]_{\mathcal{B}_N}=\left(\begin{array}{cccccccc}
\mu^N_1 & 0 & b^N_{31} & b^N_{41} & b^N_{51} & b^N_{61} & b^N_{71} & b^N_{81}\\
0 & \mu^N_1 & b^N_{32} & b^N_{42} & b^N_{52} & b^N_{62} & b^N_{72} & b^N_{82}\\
b^N_{31} & b^N_{32} & \mu^N_2 & 0 & b^N_{53} & b^N_{63} & b^N_{73} & b^N_{83}\\
b^N_{41} & b^N_{42} & 0 & \mu^N_2 & b^N_{54} & b^N_{64} & b^N_{74} & b^N_{84}\\
b^N_{51} & b^N_{52} & b^N_{53} & b^N_{54} & \mu^N_3 & 0 & b^N_{75} & b^N_{85}\\
b^N_{61} & b^N_{62} & b^N_{63} & b^N_{64} & 0 & \mu^N_3 & b^N_{76} & b^N_{86}\\
b^N_{71} & b^N_{72} & b^N_{73} & b^N_{74} & b^N_{75} & b^N_{76} & \mu^N_4 & 0\\
b^N_{81} & b^N_{82} & b^N_{83} & b^N_{84} & b^N_{85} & b^N_{86} & 0 & \mu^N_4\\
\end{array}\right).$

Given $k\in K_{\{\alpha_1\}}\stackrel{\text{dif.}}{\approx}O(2)\times O(1)\times O(1)$, $k$ has the form 

\begin{center}
$k=\left(\begin{array}{cc}
P & \textbf{0} \\
\textbf{0} & P 
\end{array}\right),$ with $P=\left(\begin{array}{cccc}
r & s & 0 & 0 \\
t & u & 0 & 0 \\
0 & 0 & v & 0 \\
0 & 0 & 0 & z \\
\end{array}\right),$
\end{center}

where the columns of $P$ are orthonormal. Then 

\ \ \ \ \ \ \ \ \ \ \ \ \ \ \ \ \ \ \ \ \ \ \ \ \ \ \ \ \ \ \ \ \ $[\Ad(k)|_{N}]_{\mathcal{B}_N}=\left(\begin{array}{cccccccc}
vr & vs & 0 & 0 & 0 & 0 & 0 & 0\\
vt & vu & 0 & 0 & 0 & 0 & 0 & 0\\
0 & 0 & zr & zs & 0 & 0 & 0 & 0\\
0 & 0 & zt & zu & 0 & 0 & 0 & 0\\
0 & 0 & 0 & 0 & vr & vs & 0 & 0\\
0 & 0 & 0 & 0 & vt & vu & 0 & 0\\
0 & 0 & 0 & 0 & 0 & 0 & zr & zs\\
0 & 0 & 0 & 0 & 0 & 0 & zt & zu\\
\end{array}\right).$

For $-r=1=u=v=z$ and $s=t=0$

\ \ \ \ \ \ \ \ \ \ \ \ \ \ \ \ \ \ \ \ \ \ \ \ \ \ \ \ \ \ \ \ $[A|_N]_{\mathcal{B}_N}[\Ad(k)|_N]_{\mathcal{B}_N}-[\Ad(k)|_N]_{\mathcal{B}_N}[A|_N]_{\mathcal{B}_N}=\textbf{0}$\\

\ \ \ \ \ \ \ \ \ \ \ \ \ \ \ \ \ \ \ \ \ \ \ \ \ $\Longleftrightarrow$ $\displaystyle\left(\begin{array}{cccccccc}
0 & 0 & 0 & 2b^N_{41} & 0 & 2b^N_{61} & 0 & 2b^N_{81}\\
0 & 0 & -2b^N_{32} & 0 & -2b^N_{52} & 0 & -2b^N_{72} & 0\\
0 & 2b^N_{32} & 0 & 0 & 0 & 2b^N_{63} & 0 & 2b^N_{83}\\
-2b^N_{41} & 0 & 0 & 0 & -2b^N_{54} & 0 & -2b^N_{74} & 0\\
0 & 2b^N_{52} & 0 & 2b^N_{54} & 0 & 0 & 0 & 2b^N_{85}\\
-2b^N_{61} & 0 & -2b^N_{63} & 0 & 0 & 0 & -2b^N_{76} & 0\\
0 & 2b^N_{72} & 0 & 2b^N_{74} & 0 & 2b^N_{76} & 0 & 0\\
-2b^N_{81} & 0 & -2b^N_{83} & 0 & -2b^N_{85} & 0 & 0 & 0\\
\end{array}\right)=\textbf{0},$

therefore $b^N_{32}=b^N_{41}=b^N_{52}=b^N_{54}=b^N_{61}=b^N_{63}=b^N_{72}=b^N_{74}=b^N_{76}=b^N_{81}=b^N_{83}=b^N_{85}=0.$ By taking $r=u=z=1=-v$ and $s=t=0,$ we have

\ \ \ \ \ \ \ \ \ \ \ \ \ \ \ \ \ \ \ \ \ \ \ \ \ \ \ \ \ \ \ \ $[A|_N]_{\mathcal{B}_N}[\Ad(k)|_N]_{\mathcal{B}_N}-[\Ad(k)|_N]_{\mathcal{B}_N}[A|_N]_{\mathcal{B}_N}=\textbf{0}$\\

\ \ \ \ \ \ \ \ \ \ \ \ \ \ \ \ \ \ \ \ \ \ \ \ \ $\Longleftrightarrow$ $\displaystyle\left(\begin{array}{cccccccc}
0 & 0 & 2b^N_{31} & 0 & 0 & 0 & 2b^N_{71} & 0\\
0 & 0 & 0 & 2b^N_{42} & 0 & 0 & 0 & 2b^N_{82}\\
-2b^N_{31} & 0 & 0 & 0 & -2b^N_{53} & 0 & 0 & 0\\
0 & -2b^N_{42} & 0 & 0 & 0 & -2b^N_{64} & 0 & 0\\
0 & 0 & 2b^N_{53} & 0 & 0 & 0 & 2b^N_{75} & 0\\
0 & 0 & 0 & 2b^N_{64} & 0 & 0 & 0 & 2b^N_{86}\\
-2b^N_{71} & 0 & 0 & 0 & -2b^N_{75} & 0 & 0 & 0\\
0 & -2b^N_{82} & 0 & 0 & 0 & -2b^N_{86} & 0 & 0\\
\end{array}\right)=\textbf{0},$

which implies $b^N_{31}=b^N_{42}=b^N_{53}=b^N_{64}=b^N_{71}=b^N_{75}=b^N_{82}=b^N_{86}=0.$ Finally, if $r=s=t=-u=\frac{1}{\sqrt{2}}$ and $v=z=1,$ then 

\ \ \ \ \ \ \ \ \ \ \ \ \ \ \ \ \ \ \ \ $[A|_N]_{\mathcal{B}_N}[\Ad(k)|_N]_{\mathcal{B}_N}-[\Ad(k)|_N]_{\mathcal{B}_N}[A|_N]_{\mathcal{B}_N}=\textbf{0}$\\

\ \ \ \ \ \ \ \ \ \ \ \ \ $\Longleftrightarrow$ $\displaystyle\left(\begin{array}{cccccccc}
0 & 0 & 0 & 0 & 0 & \frac{b^N_{51}-b^N_{62}}{\sqrt{2}} & 0 & 0\\
0 & 0 & 0 & 0 & \frac{b^N_{62}-b^N_{51}}{\sqrt{2}} & 0 & 0 & 0\\
0 & 0 & 0 & 0 & 0 & 0 & 0 & \frac{b^N_{73}-b^N_{84}}{\sqrt{2}}\\
0 & 0 & 0 & 0 & 0 & 0 & \frac{b^N_{84}-b^N_{73}}{\sqrt{2}} & 0\\
0 & \frac{b^N_{51}-b^N_{62}}{\sqrt{2}} & 0 & 0 & 0 & 0 & 0 & 0\\
\frac{b^N_{62}-b^N_{51}}{\sqrt{2}} & 0 & 0 & 0 & 0 & 0 & 0 & 0\\
0 & 0 & 0 & \frac{b^N_{73}-b^N_{84}}{\sqrt{2}} & 0 & 0 & 0 & 0\\
0 & 0 & \frac{b^N_{84}-b^N_{73}}{\sqrt{2}} & 0 & 0 & 0 & 0 & 0\\
\end{array}\right)=\textbf{0},$

thus, $b^N_{51}=b^N_{62}=:b^N_{1}$ and $b^N_{73}=b^N_{84}=:b^N_{2}.$ The cases $\Theta=\{\alpha_2\}$ and $\Theta=\{\alpha_3\}$ are analogous.
\end{proof}

\subsection{Flags of $D_l$, $l\geq 5$} The roots are $\pm(\lambda_i-\lambda_j),$ $\pm(\lambda_i+\lambda_j),$ $1\leq i<j\leq l,$ where 
\begin{center}
$\lambda_i:\displaystyle\left\{H=\left(\begin{array}{cc}\Lambda & \textbf{0}\\ \textbf{0} & -\Lambda\end{array}\right): \Lambda=\diag(a_1,.,..,a_l)\right\}\longrightarrow\mathbb{R},$ $\lambda_i(H)=a_i,$ $i=1,...,l.$
\end{center}
The simple roots are $\alpha_i=\lambda_i-\lambda_{i+1},$ $i=1,...,l-1$ and $\alpha_l=\lambda_{l-1}+\lambda_l.$ The subalgebra $\mathfrak{k}$ is the set of skew-symmetric $2l\times2l$ matrices of the form
\begin{center}
$\left(\begin{array}{cc}A & B\\ B & A \end{array}\right)$, $A+A^T=B+B^T=0$
\end{center}
which is isomorphic to $\mathfrak{so}(l)\oplus \mathfrak{so}(l)$ via the decomposition 
\begin{center}
$\left(\begin{array}{cc}A & B\\ B & A \end{array}\right)=\left(\begin{array}{cc}\frac{A+B}{2} & \frac{A+B}{2}\\ \frac{A+B}{2} & \frac{A+B}{2} \end{array}\right)+\left(\begin{array}{cc}\frac{A-B}{2} & -\frac{A-B}{2}\\ -\frac{A-B}{2} & \frac{A-B}{2} \end{array}\right).$
\end{center}
We fix the $\Ad(K)-$invariant inner product $(\cdot,\cdot)$ on $\mathfrak{k}$ given by 
\begin{equation}\label{D1}
\left(\left(\begin{array}{cc}A & B\\ B & A \end{array}\right),\left(\begin{array}{cc}C & D\\ D & C \end{array}\right)\right)=\displaystyle -\frac{Tr(AC)+Tr(BD)}{2}
\end{equation}
The matrices
\begin{equation}\label{special2}
\begin{array}{l}
w_{ij}=E_{ij}-E_{ji}+E_{l+i,l+j}-E_{l+j,l+i},\\
u_{ij}=E_{l+i,j}-E_{l+j,i}+E_{i,l+j}-E_{j,l+i},\ 1\leq j<i\leq l
\end{array}\end{equation}
form a $(\cdot,\cdot)-$orthogonal basis for $\mathfrak{k}.$ Given $\Theta\subseteq \Sigma$, we take $l_1,...,l_r$ as in \eqref{B2}. As in the case of $B_l$, we can show that 

\begin{center}
$K=\left\{\left(\begin{array}{cc}\frac{P+Q}{2} & \frac{P-Q}{2}\\ \frac{P-Q}{2}&\frac{P+Q}{2}\end{array}\right): P,Q\in SO(l)\right\}.$ 
\end{center}  
If 
\begin{center}
$H_\Theta=\left(\begin{array}{cc}
\Lambda_\Theta & \textbf{0}\\
\textbf{0} & \Lambda_\Theta
\end{array}\right)$
\end{center}
is characteristic for $\Theta$, then
\begin{center}
$K_\Theta=\left\{\left(\begin{array}{cc}\frac{P+Q}{2} & \frac{P-Q}{2}\\ \frac{P-Q}{2}&\frac{P+Q}{2}\end{array}\right): P,Q\in SO(l),\ P\Lambda_\Theta=\Lambda_\Theta Q\right\},$
\end{center}
in particular, when $P=Q$, we obtain
\begin{center}
$S(O(l_1)\times...\times O(l_r))\stackrel{\text{dif.}}{\approx}\left\{\left(\begin{array}{cc}P & \textbf{0}\\ \textbf{0} & P\end{array}\right):P=\left(\begin{array}{ccc}P_1 & \cdots & \textbf{0}\\ \vdots & \ddots & \vdots\\\textbf{0} & \cdots & P_r\end{array}\right),\ \det(P)=1,\ P_i\in O(l_i)\right\}\subseteq K_\Theta.$
\end{center}
\begin{prop}\label{2.12}(\cite{PSM}) Let $\mathbb{F}_\Theta$ be a flag manifold of $D_l,$ $l\geq 5.$ Then the following subspaces are $K_\Theta-$invariant and irreducible:

$a)$\begin{center}
$W_{mn}=\vspan\{w_{\tilde{l}_{m-1}+s,\tilde{l}_{n-1}+t}:1\leq s\leq l_m,\ 1\leq t\leq l_n\},$
\

$U_{mn}=\vspan\{u_{\tilde{l}_{m-1}+s,\tilde{l}_{n-1}+t}:1\leq s\leq l_m,\ 1\leq t\leq l_n\},$ 
\end{center}
with $1\leq n<m\leq r$ if $\alpha_l\notin\Theta$, $1\leq n<m\leq r-1$ if $\alpha_l,\alpha_{l-1}\in\Theta$ and $1\leq n<m\leq r-2$ if $\alpha_l\in\Theta$ and $\alpha_{l-1}\notin\Theta.$ For each $(m,n),$ $W_{mn}$ is equivalent to $U_{mn}.$ We set $M_{mn}=W_{mn}\oplus U_{mn}.$

$b)$\begin{center}
$U_i=\vspan\{u_{\tilde{l}_{i-1}+s,\tilde{l}_{i-1}+t}:1\leq t<s\leq l_i\},$
\end{center}
with $l_i>1,$ $1\leq i\leq r$ if $\alpha_l\notin\Theta,$ $1\leq i\leq r-1$ if $\alpha_l,\alpha_{l-1}\in\Theta$ and $1\leq i\leq r-2$ if $\alpha_l\in\Theta$ and $\alpha_{l-1}\notin\Theta.$ All these subspaces are not equivalent.

$c)$\begin{center}
$M_{rn}=\displaystyle\bigoplus\limits_{\begin{subarray}{c} 1\leq s\leq l_r\\
1\leq t\leq l_n\end{subarray}}\vspan\{w_{\tilde{l}_{r-1}+s,\tilde{l}_{n-1}+t}\}\oplus\bigoplus\limits_{\begin{subarray}{c} 1\leq s\leq l_r\\
1\leq t\leq l_n\end{subarray}}\vspan\{u_{\tilde{l}_{r-1}+s,\tilde{l}_{n-1}+t}\},$ 
\end{center}
with $1\leq n\leq r-1$ when $\alpha_l\in\Theta$ and $\alpha_{l-1}\in\Theta.$ All these subspaces are not equivalent.

$d)$\begin{center}
$M_n=\vspan\{w_{\tilde{l}_{r-2}+s,\tilde{l}_{n-1}+t}:1\leq s\leq l_{r-1},\ 1\leq t\leq l_n\}\cup\{u_{l,\tilde{l}_{n-1}+t}:1\leq t\leq l_n\},$

$N_n=\vspan\{u_{\tilde{l}_{r-2}+s,\tilde{l}_{n-1}+t}:1\leq s\leq l_{r-1},\ 1\leq t\leq l_n\}\cup\{w_{l,\tilde{l}_{n-1}+t}:1\leq t\leq l_n\},$
\end{center}
with $1\leq n\leq r-2$ when $\alpha_l\in\Theta$ and $\alpha_{l-1}\notin\Theta.$ For each $n\in\{1,...,r-2\}$, $M_n$ is equivalent to $N_n.$ We set $S_n=M_n\oplus N_n.$

$e)$\begin{center}
$V_{r-1}=\vspan\{u_{\tilde{l}_{r-2}+s,\tilde{l}_{r-2}+t}:1\leq t<s\leq l_{r-1}\}\cup\{w_{l,\tilde{l}_{r-2}+t}:1\leq t\leq l_{r-1}\},$
\end{center}
when $\alpha_l\in\Theta$ and $\alpha_{l-1}\notin\Theta.$ \hfill $\qed$
\end{prop}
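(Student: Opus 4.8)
The plan is to follow the method already used above for $B_l$ and $C_l$, which is also the route of \cite{PSM}: compute the adjoint action of $K_\Theta$ on the explicit basis $\{w_{ij},u_{ij}\}$ of $\mathfrak k$ and read off the $K_\Theta$-irreducible pieces of $\mathfrak m_\Theta$ together with the equivalences among them. First I would identify $\mathfrak m_\Theta=\mathfrak k_\Theta^{\perp}$ in that basis. Since $w_{ij}$, resp.\ $u_{ij}$, spans the compact part of the restricted root space of $\lambda_i-\lambda_j$, resp.\ $\lambda_i+\lambda_j$, and $\mathfrak g_0\cap\mathfrak k=0$, the centralizer $\mathfrak k_\Theta=\Cent_{\mathfrak k}(H_\Theta)$ is the span of the $w_{ij}$ with $i,j$ in a common $\Theta$-block together with the $u_{ij}$ satisfying $\lambda_i(H_\Theta)+\lambda_j(H_\Theta)=0$; this latter condition is empty when $\alpha_l\notin\Theta$ (the smallest such sum is $\alpha_l(H_\Theta)>0$), and when $\alpha_l\in\Theta$ it singles out the last $\Theta$-block, whose characteristic value is $0$ if $\alpha_{l-1}\in\Theta$ and which is the singleton $\{l\}$ paired against the value $-\lambda_{l-1}(H_\Theta)$ if $\alpha_{l-1}\notin\Theta$. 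Complementing in each of the three cases yields exactly the basis vectors appearing in families $a)$--$e)$.

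Next I would compute $\Ad(K_\Theta)$. Conjugating by the fixed isomorphism $\mathfrak k\cong\mathfrak{so}(l)\oplus\mathfrak{so}(l)$ of the displayed decomposition, an element $k\leftrightarrow(P,Q)$ acts on the first summand by $P$-conjugation and on the second by $Q$-conjugation; in these coordinates $w_{ij}$ is the $(i,j)$-rotation generator $R^{+}_{ij}$ of the first copy and $u_{ij}=\tfrac12(R^{+}_{ij}-R^{-}_{ij})$, so that $\Ad(k)w_{ij}=\sum_{e,f}P_{ei}P_{fj}\,w_{ef}$ and $\Ad(k)u_{ij}=\tfrac12\bigl(\sum_{e,f}P_{ei}P_{fj}R^{+}_{ef}-\sum_{e,f}Q_{ei}Q_{fj}R^{-}_{ef}\bigr)$. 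When $\alpha_l\notin\Theta$ one may take $H_\Theta$ with all blockwise values positive, and then $P\Lambda_\Theta=\Lambda_\Theta Q$ forces $P$ and $Q$ to be block-diagonal with $P=Q$; hence $w$ and $u$ transform identically, each $W_{mn},U_{mn}\cong\mathbb R^{l_m}\otimes\mathbb R^{l_n}$ is irreducible with $W_{mn}\cong U_{mn}$, the within-block $u$'s give the irreducible modules $U_i\cong\Lambda^2\mathbb R^{l_i}$, and the diagonal sign elements $\diag(\pm\mathrm I_{l_1},\dots,\pm\mathrm I_{l_r})\in K_\Theta$ act on $W_{mn}\oplus U_{mn}$ and on the $U_i$ by pairwise distinct characters of the component group, which rules out any further equivalence.

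When $\alpha_l\in\Theta$ the relation $P\Lambda_\Theta=\Lambda_\Theta Q$ no longer forces $P=Q$ along the last block. If $\alpha_{l-1}\in\Theta$ that block has characteristic value $0$, so its $P$- and $Q$-parts are independent, and the formulas above show that $\Ad(K_\Theta)$ mixes the $w$- and $u$-vectors attached to it: this is exactly why $W_{rn}$ and $U_{rn}$ fuse into the single irreducible $M_{rn}$ and why the within-block $u$'s of block $r$ no longer occur (they now lie in $\mathfrak k_\Theta$). If $\alpha_{l-1}\notin\Theta$ one finds instead $Q=JPJ$ with $J$ the reflection in the $l$-th coordinate, so the diagram-flip automorphism $\tau$ identifies $K_\Theta$ and $\mathfrak m_\Theta$ with the data of the flag $\tau(\Theta)$ whose last block is the union of the old $(r-1)$-st block with $\{l\}$; reading off case $a)$ for $\tau(\Theta)$ and transporting back produces precisely $S_n=M_n\oplus N_n$ with $M_n\cong N_n$ (the pullback of $W_{r-1,n}\oplus U_{r-1,n}$ for the merged block) and $V_{r-1}$ (the pullback of the within-block module $U_{r-1}$), and irreducibility and non-equivalence are inherited from case $a)$.

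Most of this is routine but lengthy: writing the matrices of $\Ad(k)$ on each family, exactly as in the proofs of Propositions \ref{2.4} and \ref{2.7}, and applying Schur's lemma. The genuinely delicate point --- and the reason $l\ge5$ is imposed --- is the fork analysis when $\alpha_l\in\Theta$: one must keep careful track of how $P\Lambda_\Theta=\Lambda_\Theta Q$ couples the index $l$ to the $(r-1)$-st block when $\alpha_{l-1}\notin\Theta$, since this is precisely where the generic splitting into $w$-type and $u$-type modules breaks down and the new equivalent pairs $M_n\cong N_n$ appear; for $l=2,3,4$ the accidental isomorphisms $D_2=A_1\times A_1$, $D_3=A_3$ and the triality of $D_4$ create still more equivalences, so those cases must be excluded.
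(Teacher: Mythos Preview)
The paper does not prove this proposition at all: it is quoted verbatim from \cite{PSM} and marked with a bare $\qed$, exactly as the analogous Propositions \ref{2.1}, \ref{2.4} and \ref{2.7} are. So there is no ``paper's own proof'' to compare your attempt against; what you have written is a sketch of the argument one would expect to find in \cite{PSM}.

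Your outline is broadly correct in spirit --- identify $\mathfrak m_\Theta$ in the $\{w_{ij},u_{ij}\}$ basis, compute $\Ad(K_\Theta)$ via the constraint $P\Lambda_\Theta=\Lambda_\Theta Q$ on the $\mathfrak{so}(l)\oplus\mathfrak{so}(l)$ factors, and read off the irreducibles and equivalences --- and the case split on $\alpha_l,\alpha_{l-1}$ is the right one. One small slip: under the displayed isomorphism $w_{ij}$ is not ``the rotation generator of the first copy'' but rather the diagonal element $(R_{ij},R_{ij})$, while $u_{ij}$ corresponds to the anti-diagonal $(R_{ij},-R_{ij})$; this is why, when $P=Q$ (the case $\alpha_l\notin\Theta$), both $w$ and $u$ transform identically and $W_{mn}\cong U_{mn}$, and why, when the last block allows $P\ne Q$, the $w$- and $u$-pieces attached to it mix. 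Fix that bookkeeping and your sketch is fine; but since the paper itself simply cites the result, no proof is required here.
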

\begin{prop}\label{2.13} Let $\Theta\in\Sigma$ and $l\geq 5.$

$a)$ If $\alpha_l\notin\Theta$ then 
\begin{equation}\label{D2}
\mathcal{B}_\Theta=\displaystyle\left(\bigcup\limits_{1\leq n<m\leq r}\mathcal{B}_{mn}\right)\cup\left(\bigcup\limits_{l_i>1}\mathcal{B}_i\right)
\end{equation}
is a $(\cdot,\cdot)-$orthonormal basis for $\mathfrak{m}_\Theta$ adapted to the subspaces of Proposition \ref{2.12}, where 
\begin{center}
$\mathcal{B}_{mn}=\{w_{\tilde{l}_{m-1}+s,\tilde{l}_{n-1}+t}:1\leq s\leq l_m,\ 1\leq t\leq l_n\}\cup\{u_{\tilde{l}_{m-1}+s,\tilde{l}_{n-1}+t}:1\leq s\leq l_m,\ 1\leq t\leq l_n\}$
\end{center}
and
\begin{center}
$\mathcal{B}_i=\{u_{\tilde{l}_{i-1}+s,\tilde{l}_{i-1}+t}:1\leq t<s\leq l_i\}$\ \ \ $(1\leq i\leq r).$
\end{center}
If we have $\{\alpha_{l-1},\alpha_l\}\subseteq\Theta$ instead of $\alpha_l\notin\Theta$ then $i$ extends only over $\{1,...,r-1\}.$

$b)$ If $\alpha_l\in\Theta$ and $\alpha_{l-1}\notin \Theta$ then
\begin{equation}\label{D3}
\mathcal{B}_\Theta=\displaystyle\left(\bigcup\limits_{1\leq n<m\leq r-2}\mathcal{B}_{mn}\right)\cup\left(\bigcup\limits_{\begin{subarray}{c}l_i>1\\1\leq i\leq r-2\end{subarray}}\mathcal{B}_i\right)\cup\left(\bigcup\limits_{n=1}^{r-2}\mathcal{B}^M_n\cup\mathcal{B}^N_n\right)\cup\mathcal{B}^V,
\end{equation}
is a $(\cdot,\cdot)-$orthonormal basis for $\mathfrak{m}_\Theta$ adapted to the subspaces of Proposition \ref{2.12}, where $\mathcal{B}_{mn}$ and $\mathcal{B}_i$ are as before and
\begin{center} 
$\begin{array}{l}
\mathcal{B}^M_n=\{w_{\tilde{l}_{r-2}+s,\tilde{l}_{n-1}+t}:1\leq s\leq l_{r-1},\ 1\leq t\leq l_n\}\cup\{u_{l,\tilde{l}_{n-1}+t}:1\leq t\leq l_n\},\\
\\
\mathcal{B}^N_n=\{u_{\tilde{l}_{r-2}+s,\tilde{l}_{n-1}+t}:1\leq s\leq l_{r-1},\ 1\leq t\leq l_n\}\cup\{w_{l,\tilde{l}_{n-1}+t}:1\leq t\leq l_n\},\\
\\
\mathcal{B}^V=\{u_{\tilde{l}_{r-2}+s,\tilde{l}_{r-2}+t}:1\leq t<s\leq l_{r-1}\}\cup\{w_{l,\tilde{l}_{r-2}+t}:1\leq t\leq l_{r-1}\}.
\end{array}$ 
\end{center}\hfill $\qed$
\end{prop}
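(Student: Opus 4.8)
The plan is to argue entirely inside $\mathfrak{k}\cong\mathfrak{so}(l)\oplus\mathfrak{so}(l)$ using the distinguished basis \eqref{special2}. First I would record that $\{w_{ij},u_{ij}:1\le j<i\le l\}$ is $(\cdot,\cdot)$-orthonormal: this is a one-line computation from \eqref{D1}, since each $w_{ij}$ and each $u_{ij}$ is a matrix $\bigl(\begin{smallmatrix}A&B\\ B&A\end{smallmatrix}\bigr)$ with exactly one of $A,B$ equal to $E_{ij}-E_{ji}$ and the other $0$. Every subspace occurring in Proposition \ref{2.12} is, by its very definition, the span of a subset of this orthonormal set; so the asserted $\mathcal{B}_\Theta$ is automatically $(\cdot,\cdot)$-orthonormal, has all of its vectors of one common norm, and is adapted to the decomposition. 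Thus the real content of the statement is that the listed subspaces (i) meet pairwise in $0$ and (ii) together span $\mathfrak{m}_\Theta$; point (i) is clear from the disjointness of the index ranges involved, and for (ii) I would compute $\mathfrak{k}_\Theta$ explicitly and pass to orthogonal complements.

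To compute $\mathfrak{k}_\Theta=\Cent_{\mathfrak{k}}(H_\Theta)$, fix a characteristic element $H_\Theta\in\mathfrak{a}$ and put $a_i=\lambda_i(H_\Theta)$; then the $a_i$ are constant on the blocks $\{\tilde l_{k-1}+1,\dots,\tilde l_k\}$, the block values are pairwise distinct, $a_1\ge\cdots\ge a_{l-1}\ge|a_l|$, and $a_{l-1}+a_l=0$ exactly when $\alpha_l\in\Theta$. Since $[H_\Theta,w_{ij}]$ vanishes when $a_i=a_j$ and is a nonzero multiple of a root vector in $\mathfrak{s}$ otherwise, and likewise $[H_\Theta,u_{ij}]$ vanishes exactly when $a_i+a_j=0$, while the root vectors so obtained are linearly independent, one gets
\[
\mathfrak{k}_\Theta=\vspan\{w_{ij}:a_i=a_j\}\ \oplus\ \vspan\{u_{ij}:a_i+a_j=0\}.
\]
Specialising: if $\alpha_l\notin\Theta$ no two eigenvalues sum to $0$, so $\mathfrak{k}_\Theta=\vspan\{w_{ij}:i,j\text{ in the same block}\}$; if $\{\alpha_{l-1},\alpha_l\}\subseteq\Theta$ then $a_{l-1}=a_l=0$, the last block carries value $0$, and the second summand above is $\vspan\{u_{ij}:i,j\in\text{block }r\}=U_r$; if $\alpha_l\in\Theta$ and $\alpha_{l-1}\notin\Theta$ then $l_r=1$, $a_l<0$, $a_{l-1}=-a_l>0$, the only pairs summing to $0$ are $\{l,j\}$ with $j$ in block $r-1$, and the second summand is $\vspan\{u_{l,j}:j\in\text{block }r-1\}$.

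Passing to the $(\cdot,\cdot)$-orthogonal complement, $\mathfrak{m}_\Theta$ is spanned by the surviving $w_{ij},u_{ij}$, and it remains to regroup these according to the pair of blocks their indices occupy and to match Proposition \ref{2.12}. In case $(a)$ the cross-block $w$'s and $u$'s form the $W_{mn}$ and $U_{mn}$, and the same-block $u$'s (for the blocks not absorbed into $\mathfrak{k}_\Theta$) form the $U_i$: this is exactly \eqref{D2}, and when $\{\alpha_{l-1},\alpha_l\}\subseteq\Theta$ only $U_r$ has disappeared, so $i$ runs over $\{1,\dots,r-1\}$. In case $(b)$ the surviving basis vectors split as: $\mathcal{B}_{mn}$ for $1\le n<m\le r-2$; $\mathcal{B}_i$ for $l_i>1$, $i\le r-2$; $\mathcal{B}^M_n$, the $w$'s joining block $r-1$ to block $n$ together with the vectors $u_{l,j}$, $j\in\text{block }n$; $\mathcal{B}^N_n$, the $u$'s joining block $r-1$ to block $n$ together with the $w_{l,j}$, $j\in\text{block }n$; and $\mathcal{B}^V$, the internal $u$'s of block $r-1$ together with the $w_{l,j}$, $j\in\text{block }r-1$ --- which is \eqref{D3}, the vectors $u_{l,j}$ with $j\in\text{block }r-1$ being precisely those that went into $\mathfrak{k}_\Theta$. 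A dimension count, $\dim\mathfrak{m}_\Theta=l(l-1)-\dim\mathfrak{k}_\Theta$ against the total size of the listed pieces, then confirms that nothing is missing.

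The routine-but-delicate step, and the one most prone to slip, is the last regrouping in case $(b)$: one must keep precise track of which of the vectors $w_{l,j},u_{l,j}$ with an index in the distinguished singleton block $\{l\}$ go into $M_n$, $N_n$, $V_{r-1}$, or into $\mathfrak{k}_\Theta$, and verify that the index ranges in Proposition \ref{2.12}(d)--(e) reproduce exactly the complement of $\mathfrak{k}_\Theta$. Orthonormality, the equal-norm requirement, and the disjointness of the pieces are all immediate, since the argument never leaves the fixed orthonormal basis \eqref{special2}.
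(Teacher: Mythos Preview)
Your argument is correct, but note that the paper gives no proof of this proposition at all --- it is stated and immediately marked $\qed$, treating it as routine bookkeeping (as with Propositions \ref{2.5} and \ref{2.9}). Your sketch fills in exactly what the authors leave implicit: the orthonormality of $\{w_{ij},u_{ij}\}$ with respect to \eqref{D1}, the observation that every subspace in Proposition \ref{2.12} is the span of a subset of this orthonormal family, and the explicit determination of $\mathfrak{k}_\Theta$ via $[H_\Theta,w_{ij}]=0\Leftrightarrow a_i=a_j$ and $[H_\Theta,u_{ij}]=0\Leftrightarrow a_i+a_j=0$. One small point worth making explicit in your write-up: in case $(a)$ your claim that ``no two eigenvalues sum to $0$'' relies on choosing a characteristic $H_\Theta$ with all block values strictly positive, which is possible precisely because $\alpha_l(H_\Theta)=a_{l-1}+a_l>0$; since $\mathfrak{k}_\Theta=\Cent_\mathfrak{k}(H_\Theta)$ is independent of which characteristic element is chosen, this is harmless, but it deserves a word.
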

\begin{prop}\label{2.14} Every invariant metric $A$ on a flag of $D_l,$ $l\geq 5$ is written in the basis of Proposition \ref{2.13} in the following form:

$a)$ If $\alpha_l\notin \Theta$

\ \ \ \ \ \ \ \ \ \ \ \ \ \ \ \ \ \ \ \ \ \ $[A|_{M_{mn}}]_{\mathcal{B}_{mn}}=\left(\begin{array}{cc}
\lambda^{(mn)}_1\text{I}_{W_{mn}} & b_{mn}\text{I} \\
 & \\
b_{mn}\text{I} & \lambda^{(mn)}_2\text{I}_{U_{mn}} \\
\end{array}\right),$ $1\leq n<m\leq r,$

\

\ \ \ \ \ \ \ \ \ \ \ \ \ \ \ \ \ \ \ \ \ \ $A|_{U_i}=\gamma^{(i)}\text
{I}_{U_i},$\  $1\leq i\leq r$ and $l_i>1.$\\

$b)$ If $\{\alpha_{l-1},\alpha_l\}\subseteq\Theta$

\ \ \ \ \ \ \ \ \ \ \ \ \ \ \ \ \ \ \ \ \ \ $[A|_{M_{mn}}]_{\mathcal{B}_{mn}}=\left(\begin{array}{cc}
\lambda^{(mn)}_1\text{I}_{W_{mn}} & b_{mn}\text{I} \\
 & \\
b_{mn}\text{I} & \lambda^{(mn)}_2\text{I}_{U_{mn}} \\
\end{array}\right),$ $1\leq n<m\leq r-1,$

\

\ \ \ \ \ \ \ \ \ \ \ \ \ \ \ \ \ \ \ \ \ \ $A|_{M_{rn}}=\lambda^{(rn)}\text{I}_{M_{rn}},$ $1\leq n\leq r-1,$

\

\ \ \ \ \ \ \ \ \ \ \ \ \ \ \ \ \ \ \ \ \ \ $A|_{U_i}=\gamma^{(i)}\text
{I}_{U_i},$\  $1\leq i\leq r-1$ and $l_i>1.$\\

$c)$ If $\alpha_l\in\Theta$ and $\alpha_{l-1}\notin\Theta$

\ \ \ \ \ \ \ \ \ \ \ \ \ \ \ \ \ \ \ \ \ \ $[A|_{M_{mn}}]_{\mathcal{B}_{mn}}=\left(\begin{array}{cc}
\lambda^{(mn)}_1\text{I}_{W_{mn}} & b_{mn}\text{I} \\
 & \\
b_{mn}\text{I} & \lambda^{(mn)}_2\text{I}_{U_{mn}} \\
\end{array}\right),$ $1\leq n<m\leq r-2,$

\

\ \ \ \ \ \ \ \ \ \ \ \ \ \ \ \ \ \ \ \ \ \ $A|_{U_i}=\gamma^{(i)}\text
{I}_{U_i},$\  $1\leq i\leq r-2$ and $l_i>1,$

\

\ \ \ \ \ \ \ \ \ \ \ \ \ \ \ \ \ \ \ \ \ \ $[A|_{S_n}]_{\mathcal{B}^M_{n}\cup\mathcal{B}^N_{n}}=\left(\begin{array}{cc}
\lambda^{(r-1,n)}_1\text{I}_{M_n} & \begin{array}{cc}b^{(1)}_{r-1,n}\text{I} & \textbf{0}\\ \textbf{0} & b^{(2)}_{r-1,n}\textbf{I}\end{array} \\
 & \\
\begin{array}{cc}b^{(1)}_{r-1,n}\text{I} & \textbf{0}\\ \textbf{0} & b^{(2)}_{r-1,n}\textbf{I}\end{array} & \lambda^{(r-1,n)}_2\text{I}_{N_n} \\
\end{array}\right),$ $1\leq n\leq r-2,$

\

\ \ \ \ \ \ \ \ \ \ \ \ \ \ \ \ \ \ \ \ \ \ $A|_{V_{r-1}}=\gamma^{(r-1)}\text{I}_{V_{r-1}}.$ 
\end{prop}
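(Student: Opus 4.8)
The plan is to follow the same strategy as in the proofs of Propositions \ref{2.6} and \ref{2.10}. The metric operator $A$ is $(\cdot,\cdot)$-self-adjoint, positive, and commutes with $\Ad(k)$ for every $k\in K_\Theta$. Since the subspaces listed in Proposition \ref{2.12} exhaust $\mathfrak{m}_\Theta$ and, apart from the pairs $W_{mn}\sim U_{mn}$ and $M_n\sim N_n$, are pairwise non-equivalent as $K_\Theta$-modules, the decomposition of $\mathfrak{m}_\Theta$ into isotypical summands together with \eqref{5} forces $A$ to be block-diagonal with respect to the summands $M_{mn}=W_{mn}\oplus U_{mn}$, $U_i$, $M_{rn}$, $S_n=M_n\oplus N_n$ and $V_{r-1}$ (whichever of these occur in the case under consideration). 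On a summand which is a single $K_\Theta$-irreducible subspace --- the $U_i$'s in all three cases, $M_{rn}$ in case $b)$, and $V_{r-1}$ in case $c)$ --- the restriction of $A$ is a positive self-adjoint operator commuting with an irreducible action and is therefore a positive multiple of the identity; alternatively one repeats the explicit computation of Proposition \ref{2.11}, evaluating the relation $A\circ\Ad(k)=\Ad(k)\circ A$ on the diagonal matrices with a single entry $-1$ and on the rotations inside each $O(l_i)$-factor of the diagonal subgroup $S(O(l_1)\times\cdots\times O(l_r))\subseteq K_\Theta$. This produces the scalars $\gamma^{(i)}$, $\lambda^{(rn)}$ and $\gamma^{(r-1)}$.

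For the summands $M_{mn}=W_{mn}\oplus U_{mn}$ the argument is word for word that of Proposition \ref{2.10} (which is itself the $D_l$-analogue of Proposition \ref{2.6}): by \eqref{6} the matrix $[A|_{M_{mn}}]_{\mathcal{B}_{mn}}$ has diagonal blocks $\lambda_1^{(mn)}\text{I}_{W_{mn}}$ and $\lambda_2^{(mn)}\text{I}_{U_{mn}}$ and off-diagonal blocks $B$ and $B^{T}$, while the diagonal subgroup acts on both $w_{\tilde{l}_{m-1}+s,\tilde{l}_{n-1}+t}$ and $u_{\tilde{l}_{m-1}+s,\tilde{l}_{n-1}+t}$ by the same rule $\sum_{e,f}p^m_{es}p^n_{ft}(\,\cdot\,)$, so $A\circ\Ad(k)=\Ad(k)\circ A$ yields the relation \eqref{18}. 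Taking the $P_i$ to be diagonal matrices with a single entry $-1$ kills every $b^{st}_{\tilde{e}\tilde{f}}$ with $(s,t)\neq(\tilde{e},\tilde{f})$, and taking rotations inside $O(l_m)$ and $O(l_n)$ then equates all the remaining diagonal entries; hence $B=b_{mn}\text{I}$. This settles cases $a)$ and $b)$.

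The genuinely new point is the summand $S_n=M_n\oplus N_n$ of case $c)$. By \eqref{6} its diagonal blocks are $\lambda^{(r-1,n)}_1\text{I}_{M_n}$ and $\lambda^{(r-1,n)}_2\text{I}_{N_n}$ (here using that $M_n$ and $N_n$ are $K_\Theta$-irreducible), and off the diagonal we have an intertwiner $B\colon M_n\to N_n$ together with $B^{T}$. We restrict to the diagonal subgroup $D=S(O(l_1)\times\cdots\times O(l_r))$. Under $D$ the module $M_n$ splits as $M_n^{W}\oplus M_n^{U}$, with $M_n^{W}=\vspan\{w_{\tilde{l}_{r-2}+s,\tilde{l}_{n-1}+t}\}$ and $M_n^{U}=\vspan\{u_{l,\tilde{l}_{n-1}+t}\}$, and likewise $N_n=N_n^{U}\oplus N_n^{W}$; from the explicit $\Ad$-formulas one checks that $M_n^{W}\cong N_n^{U}$ and $M_n^{U}\cong N_n^{W}$ as $D$-modules while $M_n^{W}$ and $M_n^{U}$ are inequivalent (for instance they have different dimensions, $l_{r-1}l_n$ versus $l_n$, once the configuration $\alpha_l\in\Theta$, $\alpha_{l-1}\notin\Theta$ is taken into account). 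Hence $B$ respects these two splittings. On the block $M_n^{W}\to N_n^{U}$ the $D$-action is again of the type $\sum p^{r-1}_{es}p^n_{ft}(\,\cdot\,)$, so the sign-matrix-and-rotation argument used for $M_{mn}$ shows this block equals $b^{(1)}_{r-1,n}\text{I}$; on the block $M_n^{U}\to N_n^{W}$ only the $P_n$-factor and the sign attached to the distinguished index $l$ intervene, and the same argument gives $b^{(2)}_{r-1,n}\text{I}$. This is exactly the block form asserted for $[A|_{S_n}]_{\mathcal{B}^M_n\cup\mathcal{B}^N_n}$.

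The main difficulty I anticipate is the bookkeeping in case $c)$: one has to describe carefully how $\Ad(K_\Theta)$ --- hence its diagonal subgroup $D$ --- acts on the vectors $u_{l,\tilde{l}_{n-1}+t}$ and $w_{l,\tilde{l}_{n-1}+t}$ that involve the distinguished index $l$, which requires using the fork at the end of the $D_l$ Dynkin diagram together with the hypothesis $\alpha_l\in\Theta$, $\alpha_{l-1}\notin\Theta$, and then verifying that $M_n^{W}$ and $M_n^{U}$ are genuinely inequivalent $D$-modules, so that $B$ decomposes as claimed. That $b^{(1)}_{r-1,n}$ and $b^{(2)}_{r-1,n}$ need not be equal (and need not vanish) is consistent with the equivalence classes in Proposition \ref{2.12}: once the block decomposition of $B$ is in place, no further relation can be extracted from the remaining elements of $K_\Theta$.
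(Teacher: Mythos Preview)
Your approach is essentially the same as the paper's: the paper also reduces at once to the blocks $M_{mn}$ and $S_n$ and then runs the ``sign-matrix-and-rotation'' computation of Proposition~\ref{2.6} against the diagonal subgroup $S(O(l_1)\times\cdots\times O(l_r))\subseteq K_\Theta$, obtaining exactly the relations \eqref{D4}--\eqref{D7} that force the off-diagonal block $B$ on $S_n$ to split as $b^{(1)}_{r-1,n}\mathrm{I}\oplus b^{(2)}_{r-1,n}\mathrm{I}$. Your phrasing via the $D$-module decomposition $M_n=M_n^{W}\oplus M_n^{U}$, $N_n=N_n^{U}\oplus N_n^{W}$ is a clean conceptual wrapper for the same computation.

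One genuine gap: your dimension argument for the inequivalence $M_n^{W}\not\cong M_n^{U}$ does not always apply. The hypothesis $\alpha_l\in\Theta$, $\alpha_{l-1}\notin\Theta$ forces $l_r=1$ but places no constraint on $l_{r-1}$; when $l_{r-1}=1$ (for instance $\Theta=\{\alpha_l\}$) both $M_n^{W}$ and $M_n^{U}$ have dimension $l_n$. The inequivalence is still true, but you must argue by characters rather than dimension: with $P_{r-1}=-1$, $P_r=1$ and a single $-1$ inserted in $P_n$ to restore $\det=1$, the element of $D$ so obtained acts on $M_n^{W}$ and $M_n^{U}$ through the same $P_n$-factor but with opposite global sign, so no $D$-equivariant isomorphism can exist. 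Alternatively, you can simply drop the module-theoretic shortcut at this point and run the explicit commutation computation (the analogues of \eqref{D5} and \eqref{D7}) directly, which is what the paper does and which yields $b^{st}_{\tilde f}=0$ and $b^{t}_{\tilde e\tilde f}=0$ without invoking inequivalence at all.
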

\begin{proof}
We have the result for $U_i,$ for $M_{rn}$ when $\{\alpha_{l-1},\alpha_l\}\subseteq\Theta$ and for $V_{r-1}$ when $\alpha_l\in\Theta$ and $\alpha_{l-1}\notin\Theta,$ because these subspaces are $K_\Theta-$invariant, irreducible and not equivalent to any other. For $M_{mn},$ let us take 
\begin{center}
$k=\left(\begin{array}{ccc}
P_1 & \cdots & \textbf{0}\\
\vdots & \ddots & \vdots\\
\textbf{0} & \cdots & P_r
\end{array}\right)\in K_\Theta,$
\end{center}
where $P_i=(p^i_{st})_{l_i\times l_i}\in O(l_i)$ and $\det(P)=1,$ then

\begin{center}
$\Ad(k)w_{\tilde{l}_{m-1}+s,\tilde{l}_{n-1}+t}=\displaystyle\sum\limits_{e=1}^{l_m}\sum\limits_{f=1}^{l_n}p^m_{es}p^n_{ft}w_{\tilde{l}_{m-1}+e,\tilde{l}_{n-1}+f}$
\end{center}
and
\begin{center}
$\Ad(k)u_{\tilde{l}_{m-1}+s,\tilde{l}_{n-1}+t}=\displaystyle\sum\limits_{e=1}^{l_m}\sum\limits_{f=1}^{l_n}p^m_{es}p^n_{ft}u_{\tilde{l}_{m-1}+e,\tilde{l}_{n-1}+f}.$
\end{center}
There exist real numbers $\lambda_1^{mn}>0$ and $b^{st}_{ef}$ such that 
\begin{center}
$Aw_{\tilde{l}_{m-1}+s,\tilde{l}_{n-1}+t}=\lambda_1^{(mn)}w_{\tilde{l}_{m-1}+s,\tilde{l}_{n-1}+t}+\displaystyle\sum\limits_{e=1}^{l_m}\sum\limits_{f=1}^{l_n}b^{st}_{ef}u_{\tilde{l}_{m-1}+e,\tilde{l}_{n-1}+f},$
\end{center}
so 
\begin{center}
$\begin{array}{ccl}
\Ad(k)\circ Aw_{\tilde{l}_{m-1}+s,\tilde{l}_{n-1}+t} & = & \displaystyle \lambda_1^{(mn)}\sum\limits_{e=1}^{l_m}\sum\limits_{f=1}^{l_n}p^{m}_{es}p^{n}_{ft}\ w_{\tilde{l}_{m-1}+e,\tilde{l}_{n-1}+f}\\
 & & \displaystyle+\sum\limits_{\tilde{e},e=1}^{l_m}\sum\limits_{\tilde{f},f=1}^{l_n}b^{st}_{ef}p^{m}_{\tilde{e}e}p^{n}_{\tilde{f}f}\ u_{\tilde{l}_{m-1}+\tilde{e},\tilde{l}_{n-1}+\tilde{f}}
\end{array}$
\end{center}
and
\begin{center}
$\begin{array}{ccl}
A\circ \Ad(k)w_{\tilde{l}_{m-1}+s,\tilde{l}_{n-1}+t} & = & \displaystyle \lambda_1^{(mn)}\sum\limits_{e=1}^{l_m}\sum\limits_{f=1}^{l_n}p^{m}_{es}p^{n}_{ft}\ w_{\tilde{l}_{m-1}+e,\tilde{l}_{n-1}+f}\\
 & & \displaystyle+\sum\limits_{\tilde{e},e=1}^{l_m}\sum\limits_{\tilde{f},f=1}^{l_n}b^{ef}_{\tilde{e}\tilde{f}}p^{m}_{es}p^{n}_{ft}\ u_{\tilde{l}_{m-1}+\tilde{e},\tilde{l}_{n-1}+\tilde{f}}.
\end{array}$
\end{center}
Since $A$ commutes with $\Ad(k)$, we obtain
\begin{center}
$\displaystyle\sum\limits_{e=1}^{l_m}\sum\limits_{f=1}^{l_n}b^{st}_{ef}p^{m}_{\tilde{e}e}p^{n}_{\tilde{f}f}=\sum\limits_{e=1}^{l_m}\sum\limits_{f=1}^{l_n}b^{ef}_{\tilde{e}\tilde{f}}p^{m}_{es}p^{n}_{ft}$\ \ for all $s,t,\tilde{e},\tilde{f}.$
\end{center}
We can proceed as in the proof of Proposition \ref{2.6} to show that $b_{\tilde{e}\tilde{f}}^{st}=0$ if $(s,t)\neq (\tilde{e},\tilde{f})$ and that $b^{st}_{st}=b_{\tilde{e}\tilde{f}}^{\tilde{e}\tilde{f}}=:b_{mn}$ for all $s,t,\tilde{e},\tilde{f}.$ Now, we shall prove the result for $S_n.$ When $\alpha_l\in\Theta$ and $\alpha_{l-1}\notin\Theta$ we have that $l_r=1.$ Also,
\begin{center}
$Aw_{\tilde{l}_{r-2}+s,\tilde{l}_{n-1}+t}=\lambda_1^{(r-1,n)}w_{\tilde{l}_{r-2}+s,\tilde{l}_{n-1}+t}+\displaystyle\sum\limits_{e=1}^{l_{r-1}}\sum\limits_{f=1}^{l_n}b^{st}_{ef}u_{\tilde{l}_{r-2}+e,\tilde{l}_{n-1}+f}+\sum\limits_{f=1}^{l_n}b^{st}_f w_{l,\tilde{l}_{n-1}+f},$
\end{center}
and
\begin{center}
$Au_{l,\tilde{l}_{n-1}+t}=\lambda_1^{(r-1,n)}u_{l,\tilde{l}_{n-1}+t}+\displaystyle\sum\limits_{e=1}^{l_{r-1}}\sum\limits_{f=1}^{l_n}b^{t}_{ef}u_{\tilde{l}_{r-2}+e,\tilde{l}_{n-1}+f}+\sum\limits_{f=1}^{l_n}b^{t}_{f} w_{l,\tilde{l}_{n-1}+f}.$ 
\end{center}

As before, since $\Ad(k)\circ Aw_{\tilde{l}_{r-2}+s,\tilde{l}_{n-1}+t}=A\circ \Ad(k)w_{\tilde{l}_{r-2}+s,\tilde{l}_{n-1}+t},$ then

\begin{equation}\label{D4}
\displaystyle\sum\limits_{e=1}^{l_{r-1}}\sum\limits_{f=1}^{l_n}b^{st}_{ef}p^{r-1}_{\tilde{e}e}p^{n}_{\tilde{f}f}=\sum\limits_{e=1}^{l_{r-1}}\sum\limits_{f=1}^{l_n}b^{ef}_{\tilde{e}\tilde{f}}p^{r-1}_{es}p^{n}_{ft}
\end{equation}
and
\begin{equation}\label{D5}
\displaystyle\sum\limits_{f=1}^{l_n}b^{st}_{f}p^{r}_{11}p^{n}_{\tilde{f}f}=\sum\limits_{e=1}^{l_{r-1}}\sum\limits_{f=1}^{l_n}b^{ef}_{\tilde{f}}p^{r-1}_{es}p^{n}_{ft}.
\end{equation}
By the same arguments in the proof of Proposition \ref{2.6}, equations \eqref{D4} and \eqref{D5} implies $b_{\tilde{e}\tilde{f}}^{st}=0$ if $(s,t)\neq (\tilde{e},\tilde{f}),$ $b^{st}_{st}=b_{\tilde{e}\tilde{f}}^{\tilde{e}\tilde{f}}=:b^{(1)}_{r-1,n}$ and $b^{st}_{\tilde{f}}=0$ for all $s,t,\tilde{e},\tilde{f}.$ On the other hand, $\Ad(k)\circ Au_{l,\tilde{l}_{n-1}+t}=A\circ \Ad(k)u_{l,\tilde{l}_{n-1}+t}$ implies
\begin{equation}\label{D6}
\displaystyle\sum\limits_{f=1}^{l_n}b^{t}_{f}p^{n}_{\tilde{f}f}=\sum\limits_{f=1}^{l_n}b^{f}_{\tilde{f}}p^{n}_{ft}
\end{equation}
and
\begin{equation}\label{D7}
\displaystyle\sum\limits_{e=1}^{l_{r-1}}\sum\limits_{f=1}^{l_n}b^{t}_{ef}p^{r-1}_{\tilde{e}e}p^{n}_{\tilde{f}f}=\sum\limits_{f=1}^{l_n}b^{f}_{\tilde{e}\tilde{f}}p^{r}_{11}p^{n}_{ft}.
\end{equation}
Analogously to the proof of Proposition \ref{2.6}, we obtain $b^{t}_{\tilde{e}\tilde{f}}=0,$ $b^{t}_{\tilde{f}}=0$ if $t\neq \tilde{f}$ and $b^{t}_t=b^{\tilde{f}}_{\tilde{f}}=:b^{(2)}_{r-1,n}$ for all $t,\tilde{e},\tilde{f}.$
\end{proof}

\section{Geodesic Orbit Spaces}

In this section we shall find  all the invariant g.o. metrics in real flag manifolds.

\begin{dfn}\label{3.1}
Let $G/H$ be a homogeneous manifold with an invariant metric $g.$ A geodesic $\gamma$ starting at $eH$ is called \textit{homogeneous} if it is the orbit of a $1-$parameter subgroup of $G,$ that it

\begin{equation}\label{21}
\gamma(t)=exp(tX)H
\end{equation}

where $X$ is in the Lie algebra of $G.$ In this case, $X$ is called a \textit{geodesic vector.}
\end{dfn}

\begin{dfn}\label{3.2}
We say that a Riemannian homogeneous manifold $(G/H,g)$ ($g$ an invariant metric) is a \textit{g.o. space} if every geodesic starting at $eH$ is homogeneous. In this case $g$ is called a \textit{g.o. metric.}
\end{dfn}

Let us consider a homogeneous compact manifold $G/H,$ $(\cdot,\cdot)$ an $\Ad(G)-$invariant  inner product in the Lie algebra $\mathfrak{g}$ of $G$ and a $(\cdot,\cdot)-$orthogonal reductive decomposition $\mathfrak{g}=\mathfrak{h}\oplus\mathfrak{m}.$ In order to determine the real flag manifolds $(\mathbb{F}_\Theta, g)$ which are g.o. spaces, we use the following propositions.

\begin{prop}\label{3.3}(\cite{N}) $(G/H,g)$ is a g.o. space if and only if for all $X\in\mathfrak{m},$ there exist a vector $Z\in\mathfrak{h}$ such that 
\begin{equation}\label{22}
[Z+X,AX]=0
\end{equation}  

where $A$ is the metric operator corresponding to $g.$ \hfill $\qed$
\end{prop}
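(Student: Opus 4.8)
The plan is to derive \eqref{22} from the classical geodesic lemma for reductive homogeneous spaces, rephrased in the language $g=(A\,\cdot\,,\,\cdot)$ fixed in Section~3; I would organise it in three steps. \emph{Step 1 (reduction to geodesic vectors).} By homogeneity it is enough to consider geodesics issuing from $o=eH$. For $V\in\mathfrak{g}$ the curve $t\mapsto\exp(tV)\cdot o$ has velocity at time $t$ equal to the value at $\exp(tV)\cdot o$ of the fundamental vector field $V^{*}$ generated by $V$ (this uses only $\exp((t+s)V)=\exp(tV)\exp(sV)$), and $V^{*}_{o}$ is the $\mathfrak{m}$-component $V_{\mathfrak{m}}$ of $V$. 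Since for each $X\in\mathfrak{m}=T_{o}(G/H)$ there is a unique geodesic with initial velocity $X$, the space $(G/H,g)$ is a g.o.\ space if and only if every $X\in\mathfrak{m}$ equals $V_{\mathfrak{m}}$ for some geodesic vector $V$, i.e.\ if and only if for every $X\in\mathfrak{m}$ there is $Z\in\mathfrak{h}$ such that $V=Z+X$ is a geodesic vector (that is, $t\mapsto\exp(tV)\cdot o$ is a geodesic).

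\emph{Step 2 (geodesic lemma).} I would prove that $V$ is a geodesic vector if and only if $g([V,W]_{\mathfrak{m}},V_{\mathfrak{m}})=0$ for all $W\in\mathfrak{m}$. Since $\exp(tV)$ acts by isometries, carries $o$ to $\gamma(t)=\exp(tV)\cdot o$, and commutes with the flow of $V^{*}$, the geodesic equation $\nabla_{V^{*}}V^{*}=0$ holds along $\gamma$ as soon as it holds at $o$, so the task becomes the algebraic evaluation of $(\nabla_{V^{*}}V^{*})_{o}$. I would carry it out with the Koszul formula applied to the fundamental fields $V^{*}$ and $W^{*}$, $W\in\mathfrak{m}$, using $[V^{*},W^{*}]=-[V,W]^{*}$ and $X^{*}_{o}=X_{\mathfrak{m}}$, and differentiating the coefficient functions $p\mapsto g_{p}(V^{*}_{p},W^{*}_{p})$ along the Killing fields via $L_{h*}\xi^{*}=(\Ad(h)\xi)^{*}$; the several terms combine into $g\big((\nabla_{V^{*}}V^{*})_{o},W\big)=-g([V,W]_{\mathfrak{m}},V_{\mathfrak{m}})$, which is the lemma. (Alternatively this lemma can simply be quoted from \cite{N}.)

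\emph{Step 3 (operator form and conclusion).} Write $V=Z+X$ with $Z\in\mathfrak{h}$, $X\in\mathfrak{m}$, so that $V_{\mathfrak{m}}=X$. Using $g=(A\,\cdot\,,\,\cdot)$ on $\mathfrak{m}$, the self-adjointness of $A$, and the $(\cdot,\cdot)$-orthogonality of $\mathfrak{h}$ and $\mathfrak{m}$,
\[
g([Z+X,W]_{\mathfrak{m}},X)=([Z+X,W]_{\mathfrak{m}},AX)=([Z+X,W],AX),
\]
and then the $\Ad(G)$-invariance of $(\cdot,\cdot)$, i.e.\ the skew-symmetry of each $\ad(U)$, gives $([Z+X,W],AX)=-(W,[Z+X,AX])$. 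Hence the condition of Step~2 becomes $(W,[Z+X,AX])=0$ for all $W\in\mathfrak{m}$, which is condition \eqref{22}. Feeding this back into the equivalence of Step~1 proves the proposition.

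I expect Step~2 to carry the real weight: obtaining $(\nabla_{V^{*}}V^{*})_{o}$ correctly requires careful bookkeeping of the sign conventions in the Koszul formula and in $X\mapsto X^{*}$, and of the derivatives of the metric coefficients along Killing fields. Steps~1 and~3 are essentially routine, the only subtlety being the reductive splitting and the repeated passage between the metric $g$ on $\mathfrak{m}$ and the fixed $\Ad(G)$-invariant form $(\cdot,\cdot)$ on $\mathfrak{g}$; indeed, if one is content to invoke the geodesic lemma of \cite{N}, the whole proof reduces to Steps~1 and~3.
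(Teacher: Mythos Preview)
The paper does not prove this proposition at all; it is simply quoted from \cite{N} with a terminal $\qed$. Your outline therefore provides strictly more than the paper, and Steps~1 and~2 are the standard route (Kowalski--Vanhecke geodesic lemma plus homogeneity), correctly identified.

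There is, however, a genuine gap in Step~3. From $(W,[Z+X,AX])=0$ for all $W\in\mathfrak{m}$ you only obtain that the $\mathfrak{m}$-component of $[Z+X,AX]$ vanishes, whereas \eqref{22} asserts $[Z+X,AX]=0$ in all of $\mathfrak{g}$. You must still check that the $\mathfrak{h}$-component is automatically zero. This needs two observations: first, $[Z,AX]\in\mathfrak{m}$ because the decomposition is reductive ($[\mathfrak{h},\mathfrak{m}]\subseteq\mathfrak{m}$); second, $[X,AX]_{\mathfrak{h}}=0$, which follows from the $\Ad(K_\Theta)$-equivariance of $A$. Indeed, for $H\in\mathfrak{h}$ one has on the one hand $([X,AX],H)=(AX,[H,X])$ by $\ad$-invariance of $(\cdot,\cdot)$, and on the other hand $([X,AX],H)=-(X,[H,AX])=-(X,A[H,X])=-(AX,[H,X])$ using $[H,AX]=A[H,X]$ (from $A\circ\Ad(k)=\Ad(k)\circ A$) and self-adjointness of $A$; comparing the two gives $([X,AX],H)=0$. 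With this addendum your Step~3 closes and the argument is complete.
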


Since every invariant metric $A$ is a positive $(\cdot,\cdot)-$self-adjoint operator, we have that $\mathfrak{m}$ admits a decomposition $\mathfrak{m}=\mathfrak{m}_{\lambda_1}\oplus...\oplus\mathfrak{m}_{\lambda_r}$ into eigenspaces of $A$.

\begin{prop}\label{3.4}(\cite{N}) Let $(G/H,A)$ be a g.o. space.

$a)$ If $\lambda_1,\ \lambda_2$ are eigenvalues of $A,$ such that there exist $\Ad(H)-$invariant, pairwise $(\cdot,\cdot)-$orthogonal subspaces $\mathfrak{m}_1,$ $\mathfrak{m}_2$ of $\mathfrak{m}$ with 

\begin{center}
$\mathfrak{m}_i\subseteq \mathfrak{m}_{\lambda_i},\ i=1,2 \text{ and } 
[\mathfrak{m}_1,\mathfrak{m}_2]_{(\mathfrak{m}_1\oplus\mathfrak{m}_2)^{\perp}}\neq\{ 0 \},$
\end{center}

where the orthogonal complement is taking with respect to $(\cdot,\cdot)$, then $\lambda_1=\lambda_2.$

$b)$ If $\lambda_1,\ \lambda_2,$ $\lambda_3$ are eigenvalues of $A,$ such that there exist $\Ad(H)-$invariant, pairwise $(\cdot,\cdot)-$orthogonal subspaces $\mathfrak{m}_1,$ $\mathfrak{m}_2$ $\mathfrak{m}_3$ of $\mathfrak{m}$ with 

\begin{center}
$\mathfrak{m}_i\subseteq \mathfrak{m}_{\lambda_i},\ i=1,2,3 \text{ and } 
[\mathfrak{m}_1,\mathfrak{m}_2]_{\mathfrak{m}_3}\neq\{ 0 \},$
\end{center}

then $\lambda_1=\lambda_2=\lambda_3.$ \hfill $\qed$
\end{prop}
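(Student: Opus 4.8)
The plan is to extract both statements directly from the geodesic--orbit criterion recorded in Proposition \ref{3.3}: $(G/H,A)$ is a g.o. space exactly when for every $X\in\mathfrak{m}$ there is some $Z=Z(X)\in\mathfrak{h}$ with $[Z+X,AX]=0$. The guiding principle is to feed into this criterion the simplest vector that interacts with the bracket hypothesis --- a sum of vectors drawn from the eigen-submodules in question --- and then project the resulting identity onto a subspace that annihilates the unknown $\mathfrak{h}$-term $[Z,\cdot\,]$.

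For part (a) I would fix $X_1\in\mathfrak{m}_1\subseteq\mathfrak{m}_{\lambda_1}$ and $X_2\in\mathfrak{m}_2\subseteq\mathfrak{m}_{\lambda_2}$, set $X=X_1+X_2$ so that $AX=\lambda_1X_1+\lambda_2X_2$, and apply Proposition \ref{3.3} to obtain $Z\in\mathfrak{h}$ with
\[
\lambda_1[Z,X_1]+\lambda_2[Z,X_2]+(\lambda_2-\lambda_1)[X_1,X_2]=0 .
\]
Since $\mathfrak{m}_1,\mathfrak{m}_2$ are $\Ad(H)$-invariant (equivalently $\ad(\mathfrak{h})$-invariant), $[Z,X_1]\in\mathfrak{m}_1$ and $[Z,X_2]\in\mathfrak{m}_2$, so the first two terms lie in $\mathfrak{m}_1\oplus\mathfrak{m}_2$. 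Projecting the identity $(\cdot,\cdot)$-orthogonally onto $(\mathfrak{m}_1\oplus\mathfrak{m}_2)^{\perp}$ kills those terms and leaves $(\lambda_2-\lambda_1)\,[X_1,X_2]_{(\mathfrak{m}_1\oplus\mathfrak{m}_2)^{\perp}}=0$; since $X_1,X_2$ range over all of $\mathfrak{m}_1,\mathfrak{m}_2$ and, by hypothesis, can be chosen with this projection nonzero, one concludes $\lambda_1=\lambda_2$.

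For part (b) I would first observe that pairwise orthogonality gives $\mathfrak{m}_3\subseteq(\mathfrak{m}_1\oplus\mathfrak{m}_2)^{\perp}$, so the hypothesis $[\mathfrak{m}_1,\mathfrak{m}_2]_{\mathfrak{m}_3}\neq\{0\}$ already forces $[\mathfrak{m}_1,\mathfrak{m}_2]_{(\mathfrak{m}_1\oplus\mathfrak{m}_2)^{\perp}}\neq\{0\}$, and part (a) yields $\lambda_1=\lambda_2$. To recover the remaining equality I would invoke the cyclic identity $([U,V],W)=([V,W],U)=([W,U],V)$, valid for the $\Ad(G)$-invariant form $(\cdot,\cdot)$: it converts $[\mathfrak{m}_1,\mathfrak{m}_2]_{\mathfrak{m}_3}\neq\{0\}$ into $[\mathfrak{m}_1,\mathfrak{m}_3]_{\mathfrak{m}_2}\neq\{0\}$ and $[\mathfrak{m}_2,\mathfrak{m}_3]_{\mathfrak{m}_1}\neq\{0\}$, whereupon part (a), applied to the pairs $(\mathfrak{m}_1,\mathfrak{m}_3)$ and $(\mathfrak{m}_2,\mathfrak{m}_3)$, gives $\lambda_1=\lambda_3$ and $\lambda_2=\lambda_3$. (Equivalently, one can rerun the part-(a) computation with the same $X=X_1+X_2$ but project onto $\mathfrak{m}_3$ directly, then repeat with $\mathfrak{m}_3$ playing the role of $\mathfrak{m}_2$.)

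I do not expect a genuine obstacle; the two subtle points are pure bookkeeping. First, one needs that $[Z,X_i]$ really stays inside $\mathfrak{m}_i$ --- this is exactly the infinitesimal form of $\Ad(H)$-invariance of the submodules. Second, $[X_1,X_2]$ may a priori have an $\mathfrak{h}$-component, but this is harmless because the projection is performed in all of $\mathfrak{g}$, where $\mathfrak{h}$ is $(\cdot,\cdot)$-orthogonal to each $\mathfrak{m}_i$. The only real decision is the choice of test vector, and $X=X_1+X_2$ is visibly the right one.
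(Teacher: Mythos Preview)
Your argument is correct. The paper does not actually prove this proposition: it is stated with a citation to \cite{N} and closed with a $\qed$, so there is no in-paper proof to compare against. Your approach---plugging $X=X_1+X_2$ into the criterion of Proposition~\ref{3.3}, using $\ad(\mathfrak{h})$-invariance of the $\mathfrak{m}_i$ to confine $[Z,X_i]$ to $\mathfrak{m}_i$, and then projecting onto $(\mathfrak{m}_1\oplus\mathfrak{m}_2)^\perp$---is exactly the standard one, and your reduction of part~(b) to part~(a) via the cyclic identity for the $\Ad(G)$-invariant form is clean and correct.
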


\textbf{Remark.} Every invariant metric $A$ which is a scalar multiple of the identity endomorphism is a trivial solution for equation \eqref{22} (by taking $Z=0$ for every $X),$ such a metric is called \textit{normal.} 
\subsection{Flags of $A_l$, $l\geq1$}
As in section 2, we fix $(\cdot,\cdot)=-\langle\cdot,\cdot\rangle,$ where $\langle\cdot,\cdot\rangle$ is the killing form of $\mathfrak{so}(l+1)$ and the $(\cdot,\cdot)-$orthogonal basis $\{w_{ij}:1\leq j<i\leq l+1\}.$

\begin{prop}\label{3.5} Let $\mathbb{F}_\Theta$ be a flag of $A_l$, with $l\neq 3$ or $l=3 $ and $\Theta\in\{\{\alpha_1,\alpha_2\},\{\alpha_2,\alpha_3\}\}.$ Then, $(\mathbb{F}_\Theta,A)$ is a g.o. space if and only if $A$ is normal, i.e., $A=\mu\text{I}_{\mathfrak{m}_\Theta},$ $\mu>0.$
\end{prop}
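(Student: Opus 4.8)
The direction $(\Leftarrow)$ is immediate from the Remark above: a metric $A=\mu\,\text{I}_{\mathfrak{m}_\Theta}$, $\mu>0$, solves \eqref{22} with $Z=0$ for every $X$, hence is g.o.

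For $(\Rightarrow)$, I first dispose of the two exceptional flags of $A_3$: when $\Theta=\{\alpha_1,\alpha_2\}$ or $\{\alpha_2,\alpha_3\}$, Proposition \ref{2.3} shows that every invariant metric is already a positive multiple of the identity, so it is normal and there is nothing to prove. Assume now $l\neq 3$ and write $\Theta$ as in \eqref{7}, with partition $l+1=l_1+\dots+l_r$. If $r\le 2$, then by Corollary \ref{2.2} the invariant metric is a multiple of the identity, hence normal; so we may assume $r\ge 3$. By Corollary \ref{2.2}, $A|_{M_{mn}}=\mu_{mn}\,\text{I}_{M_{mn}}$ for positive numbers $\mu_{mn}$, $1\le n<m\le r$; by Proposition \ref{2.1} the $M_{mn}$ are pairwise inequivalent irreducible $K_\Theta$-submodules, they are pairwise $(\cdot,\cdot)$-orthogonal, and $M_{mn}$ lies in the $\mu_{mn}$-eigenspace of $A$.

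The crux is a single bracket identity in $\mathfrak{k}=\mathfrak{so}(l+1)$. Writing $M_{\{a,b\}}:=M_{ab}$ for $a>b$, I claim that for any three pairwise distinct block indices $m,n,p$ one has
\[
[M_{\{m,n\}},M_{\{n,p\}}]_{M_{\{m,p\}}}\neq\{0\}.
\]
Indeed, choosing indices $i$, $j$, $k$ lying in the three respective blocks, a direct computation gives $[\,E_{ij}-E_{ji},\,E_{jk}-E_{kj}\,]=E_{ik}-E_{ki}$; after reindexing according to the order of the blocks this says that the bracket of two suitable basis vectors $w_{\bullet\bullet}$ of $M_{\{m,n\}}$ and $M_{\{n,p\}}$ is, up to sign, a basis vector of $M_{\{m,p\}}$ — equivalently, it is the root identity $[\mathfrak{g}_{\lambda_i-\lambda_j},\mathfrak{g}_{\lambda_j-\lambda_k}]=\mathfrak{g}_{\lambda_i-\lambda_k}$ carried through the identification $X_\alpha\mapsto X_\alpha-X_{-\alpha}$. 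Applying Proposition \ref{3.4}(b) with $\mathfrak{m}_1=M_{\{m,n\}}$, $\mathfrak{m}_2=M_{\{n,p\}}$, $\mathfrak{m}_3=M_{\{m,p\}}$ then forces $\mu_{\{m,n\}}=\mu_{\{n,p\}}=\mu_{\{m,p\}}$.

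It remains to chain these relations. Since $r\ge 3$, any two distinct pairs $\{a,b\}$ and $\{c,d\}$ of block indices are linked by such triangles: one application of the identity above if the pairs share an index, and two applications (routing through an intermediate pair) if $a,b,c,d$ are all distinct. Hence all the $\mu_{mn}$ coincide and $A=\mu\,\text{I}_{\mathfrak{m}_\Theta}$ is normal. The only step requiring genuine care is the bracket computation together with the bookkeeping that places the bracket \emph{precisely} in $M_{\{m,p\}}$, with nonzero coefficient and no component in any other isotypical summand; once that is settled, Propositions \ref{2.1} and \ref{3.4}(b) and the short connectivity argument complete the proof.
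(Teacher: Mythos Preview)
Your proof is correct and follows essentially the same line as the paper's. Both arguments compute the bracket $[w_{ij},w_{jk}]=w_{ik}$ in $\mathfrak{so}(l+1)$ and feed it into Proposition~\ref{3.4}; the paper invokes part~(a) (the bracket lands in $(M_{mn}\oplus M_{m'n})^\perp$) and chains via a common index, whereas you invoke part~(b) (the bracket lands in the third module $M_{\{m,p\}}$) and chain via triangles---the underlying computation and conclusion are identical. Your handling of the case $r\le 2$ is a small explicit addition not spelled out in the paper (there it is implicit, since for $r=2$ there is a single $\mu_{21}$ and the claim is vacuous).
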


\begin{proof} Let $A$ a g.o. metric in a flag $\mathbb{F}_\Theta$ of $A_l.$ If $l=3$ and $\Theta\in\{\{\alpha_1,\alpha_2\},\{\alpha_2,\alpha_3\}\}$, then, by Proposition \ref{2.3}, $A$ has the form $A=\mu\textbf{I}_{\mathfrak{m}_\Theta}.$ If $l\neq3,$ we have by Corollary \ref{2.2} that $A$ is determined by $\frac{r(r-1)}{2}$ positive numbers $\mu_{mn},$ $1\leq n<m\leq r$, such that $A|_{M_{nm}}=\mu_{mn}\text{I}_{M_{mn}},$ with $M_{mn}$ as in \eqref{8}. We shall prove that $\mu_{mm}=\mu_{m'n'}$ for all $(m,n),\ (m',n').$ First, we prove $\mu_{mn}=\mu_{m'n}$ for $1\leq n<m,m'\leq r.$ In fact, without loss of generality, let us suppose $m<m',$ then 

\begin{center}
$w_{\tilde{l}_{m'-1}+1,\tilde{l}_{m-1}+1}=\left[w_{\tilde{l}_{m-1}+1,\tilde{l}_{n-1}+1},w_{\tilde{l}_{m'-1}+1,\tilde{l}_{n-1}+1}\right]\in \left[M_{mn},M_{m'n}\right].$
\end{center}

Since $\tilde{l}_{m-1}+1\neq\tilde{l}_{n-1}+t$ for all $t\in\{1,...,l_n\},$ then 

\begin{center}
$(w_{\tilde{l}_{m'-1}+1,\tilde{l}_{m-1}+1},w_{\tilde{l}_{m-1}+s,\tilde{l}_{n-1}+t})=0$, $1\leq s\leq l_m,$ $1\leq t\leq l_n$
\end{center}
and
\begin{center}
$(w_{\tilde{l}_{m'-1}+1,\tilde{l}_{m-1}+1},w_{\tilde{l}_{m'-1}+s,\tilde{l}_{n-1}+t})=0$, $1\leq s\leq l_{m'},$ $1\leq t\leq l_n,$
\end{center}

thus, $w_{\tilde{l}_{m'-1}+1,\tilde{l}_{m-1}+1}\in(M_{mn}\oplus M_{m'n})^\perp.$ Evidently, $M_{mn}$ and $M_{m'n}$ are contained in the eigenspaces of $A$ corresponding to the eigenvalues $\mu_{mn}$ and $\mu_{m'n}$, respectively and, by Proposition \ref{2.1}, they are $K_\Theta-$invariant. Also $M_{mn}$ and $M_{m'n}$ are $(\cdot,\cdot)-$orthogonal. By Proposition \ref{3.4}, we conclude that $\mu_{mn}=\mu_{m'n}.$ Now, we will show that $\mu_{mn}=\mu_{mn'},$ $1\leq n,n'<m\leq r.$ Let us suppose $n<n',$ then 

\begin{center}
$w_{\tilde{l}_{n'-1}+1,\tilde{l}_{n-1}+1}=\left[w_{\tilde{l}_{m-1}+1,\tilde{l}_{n-1}+1},w_{\tilde{l}_{m-1}+1,\tilde{l}_{n'-1}+1}\right]\in \left[M_{mn},M_{mn'}\right].$
\end{center}

Since $\tilde{l}_{n-1}+1\neq\tilde{l}_{m-1}+t$ for all $t\in\{1,...,l_m\},$ then 

\begin{center}
$(w_{\tilde{l}_{n'-1}+1,\tilde{l}_{n-1}+1},w_{\tilde{l}_{m-1}+s,\tilde{l}_{n-1}+t})=0$, $1\leq s\leq l_m,$ $1\leq t\leq l_n$
\end{center}
and
\begin{center}
$(w_{\tilde{l}_{n'-1}+1,\tilde{l}_{n-1}+1},w_{\tilde{l}_{m-1}+s,\tilde{l}_{n'-1}+t})=0$, $1\leq s\leq l_{m},$ $1\leq t\leq l_{n'},$
\end{center}

thus, $w_{\tilde{l}_{n'-1}+1,\tilde{l}_{n-1}+1}\in(M_{mn}\oplus M_{mn'})^\perp.$ By Proposition \ref{3.4}, $\mu_{mn}=\mu_{mn'}.$ Because of the above, we have $\mu_{mn}=\mu_{m'n}=\mu_{m'n'}.$
\end{proof}

\begin{prop}\label{3.6} Let $\mathbb{F}_\Theta$ be a flag of $A_3$ with $\Theta\notin\{\{\alpha_1,\alpha_2\},\{\alpha_2,\alpha_3\}\}.$ Then, $(\mathbb{F}_\Theta,A)$ is a g.o. space if and only if $A$ is written in the basis \eqref{10} as

$[A]_{\mathcal{B}_{\emptyset}}=\left(\begin{array}{cccccc}
\mu & b & 0 & 0 & 0 & 0\\
b & \mu & 0 & 0 & 0 & 0\\
0 & 0 & \mu & -b & 0 & 0\\
0 & 0 & -b & \mu & 0 & 0\\
0 & 0 & 0 & 0 & \mu & b\\
0 & 0 & 0 & 0 & b & \mu\\
\end{array}\right),$ $[A]_{\mathcal{B}_{\{\alpha_1,\alpha_3\}}}=\left(\begin{array}{cccc}
\mu_1 & 0 & 0 & 0 \\
0 & \mu_1 & 0 & 0 \\
0 & 0 & \mu_2 & 0 \\
0 & 0 & 0 & \mu_2 \\
\end{array}\right)$ and

\

$[A]_{\mathcal{B}_{\Theta}}=\left(\begin{array}{ccccc}
\mu_1 & 0 & 0 & 0 & 0 \\
0 & \mu_2 & 0 & \pm\sqrt{\mu_2(\mu_2-\mu_1)} & 0 \\
0 & 0 & \mu_2 & 0 & \mp\sqrt{\mu_2(\mu_2-\mu_1)} \\
0 & \pm\sqrt{\mu_2(\mu_2-\mu_1)} & 0 & \mu_2 & 0 \\
0 & 0 & \mp\sqrt{\mu_2(\mu_2-\mu_1)} & 0 & \mu_2 \\
\end{array}\right),$ $\mu_2>\mu_1$\\

for $\Theta=\{\alpha_1\},\{\alpha_2\}$ or $\{\alpha_3\}.$
\end{prop}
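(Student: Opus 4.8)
The plan is to apply the necessary conditions of Proposition \ref{3.4} together with the characterization of g.o. metrics in Proposition \ref{3.3}, using the explicit form of invariant metrics from Proposition \ref{2.3} and the bracket relations among the $w_{ij}$ in $\mathfrak{so}(4)$. For each of the three families of $\Theta$ (the empty set, the one-element sets $\{\alpha_1\},\{\alpha_2\},\{\alpha_3\}$, and $\{\alpha_1,\alpha_3\}$) I would first narrow down which metrics can possibly be g.o. by a dimension-count / eigenvalue argument, and then verify that the surviving candidates actually satisfy \eqref{22}.

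First, for $\Theta=\emptyset$: by Proposition \ref{2.3} the metric operator acts on each isotypical summand $M_i$ ($i=1,2,3$) as a $2\times 2$ symmetric block with diagonal entries $\mu^{(i)}_1,\mu^{(i)}_2$ and off-diagonal $b_i$. The eigenvalues of each block are $\tfrac{1}{2}(\mu^{(i)}_1+\mu^{(i)}_2)\pm\tfrac12\sqrt{(\mu^{(i)}_1-\mu^{(i)}_2)^2+4b_i^2}$. I would use the bracket relations $[w_{21},w_{31}]=\pm w_{32}$, $[w_{21},w_{43}]=0$, $[w_{31},w_{42}]=0$, $[w_{41},w_{32}]=\pm w_{43}$ etc.\ (the standard $\mathfrak{so}(4)$ commutators among the $E_{ij}-E_{ji}$) to locate triples of eigenvectors $v_1\in\mathfrak{m}_{\lambda_1}$, $v_2\in\mathfrak{m}_{\lambda_2}$ with $[v_1,v_2]$ having a nonzero component along a third eigenvector $v_3$. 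Part $b)$ of Proposition \ref{3.4} then forces the three eigenvalues to coincide; running through all such brackets one finds the only freedom left is precisely the stated form, with a common $\mu$ on the diagonal of every block and a sign-constrained off-diagonal $b$ (the signs are dictated by the requirement that the eigenvalue-splitting across the three blocks be compatible with the brackets). Conversely, given that form, I would exhibit for each $X\in\mathfrak{m}_\emptyset$ an explicit $Z\in\mathfrak{k}_\emptyset=\{0\}$ — so in fact $Z=0$ — no: since $\mathfrak{k}_\emptyset=0$ one must check $[X,AX]=0$ directly, and a short computation using the block structure shows $[X,AX]=0$ for all $X$ exactly when the diagonal entries agree and the $b_i$ have the displayed signs. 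This last verification is the computational heart of the $\Theta=\emptyset$ case.

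Second, for $\Theta=\{\alpha_1\}$ (and symmetrically $\{\alpha_2\},\{\alpha_3\}$): here $\mathfrak{k}_\Theta=\mathfrak{m}_{21}$ is one-dimensional, $M_1=\mathfrak{m}_{43}$ is a normal $1$-dimensional summand with scale $\mu_1$, and $M_2$ is $4$-dimensional with metric operator having eigenvalues $\mu_2\pm|b|$ on complementary $2$-planes (from the form in Proposition \ref{2.3}). I would first apply Proposition \ref{3.4} to the brackets $[\mathfrak{m}_{43},\,\cdot\,]$ landing in $M_2$ (e.g.\ $[w_{43},w_{41}]=\pm w_{31}$, etc.) to relate $\mu_1$ to the $M_2$-eigenvalues; this gives $\mu_1 = \mu_2-|b|$, i.e.\ $b^2=\mu_2(\mu_2-\mu_1)$ once one also tracks which eigenvalue equals $\mu_1$, forcing $\mu_2>\mu_1$. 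Then, for the converse, I would use the nontrivial $Z\in\mathfrak{m}_{21}$: for $X$ in the ``large'' eigenspace of $M_2$ one chooses $Z$ a suitable multiple of $w_{21}$ so that $\ad(Z)$ rotates within $M_2$ and cancels the $M_2$-component of $[X,AX]$, while the $\mathfrak{k}_\Theta$-component and the $\mathfrak{m}_{43}$-component vanish because of the matched eigenvalues; for general $X$ one reduces to this by $K_\Theta$-equivariance. Verifying that such a $Z$ always exists — i.e.\ that the relevant linear system in the coefficient of $w_{21}$ is solvable for every $X$ — is where the precise value $b=\pm\sqrt{\mu_2(\mu_2-\mu_1)}$ gets pinned down, and is the main obstacle in this case.

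Finally, for $\Theta=\{\alpha_1,\alpha_3\}$: by Proposition \ref{2.3} every invariant metric already has the block-scalar form $\mathrm{diag}(\mu_1,\mu_1,\mu_2,\mu_2)$ with $M_1,M_2$ inequivalent $K_\Theta$-invariant $2$-planes, so the only thing to check is that this is automatically g.o. I would verify $[M_1,M_2]\subseteq\mathfrak{k}_\Theta=\mathfrak{m}_{21}\oplus\mathfrak{m}_{43}$, $[M_1,M_1]\subseteq\mathfrak{k}_\Theta$, $[M_2,M_2]\subseteq\mathfrak{k}_\Theta$ (a direct bracket computation in $\mathfrak{so}(4)$ using the basis in \eqref{10}), so that for $X=X_1+X_2\in M_1\oplus M_2$ one has $AX=\mu_1X_1+\mu_2X_2$ and $[X,AX]=(\mu_2-\mu_1)[X_1,X_2]\in\mathfrak{k}_\Theta$; then choosing $Z=-(\mu_2-\mu_1)[X_1,X_2]$-adjusted element of $\mathfrak{k}_\Theta$, or more carefully the unique $Z\in\mathfrak{k}_\Theta$ with $[Z+X,AX]=0$, solves \eqref{22} — this works because $\ad(\mathfrak{k}_\Theta)$ acts appropriately on $\mathfrak{m}_\Theta$; I expect this case to be the easiest. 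Throughout, the recurring obstacle is the bookkeeping of signs and of which $2$-plane inside a $4$-dimensional summand carries which eigenvalue, since the brackets in $\mathfrak{so}(4)$ (with its $\mathfrak{so}(3)\oplus\mathfrak{so}(3)$ decomposition implicit in the $A_3$ exceptional behavior) mix the self-dual and anti-self-dual parts in a way that is exactly what produces the $\pm$ and $\mp$ in the statement.
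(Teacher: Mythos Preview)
Your plan for $\Theta=\emptyset$ and $\Theta=\{\alpha_1,\alpha_3\}$ is workable (if a bit indirect for the first, since with $\mathfrak{k}_\emptyset=0$ Proposition~\ref{3.3} already reduces to the pointwise condition $[X,AX]=0$, so invoking Proposition~\ref{3.4} first is superfluous; and for the second, a direct computation in fact gives $[M_1,M_2]=0$, so $Z=0$ already works). The paper handles both cases by direct bracket computations with well-chosen test vectors.

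There is, however, a genuine gap in your treatment of $\Theta=\{\alpha_1\}$. First, the assertion ``$\mu_1=\mu_2-|b|$, i.e.\ $b^2=\mu_2(\mu_2-\mu_1)$'' is simply false: the first relation gives $b^2=(\mu_2-\mu_1)^2$, which equals $\mu_2(\mu_2-\mu_1)$ only when $\mu_1=0$. Indeed, for the actual g.o.\ metrics the eigenvalue $\mu_1$ on $\mathfrak{m}_{43}$ is \emph{not} one of the two eigenvalues $\mu_2\pm|b|$ of $A|_{M_2}$. Second, and more fundamentally, Proposition~\ref{3.4} cannot produce the relation $b^2=\mu_2(\mu_2-\mu_1)$ at all. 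What it \emph{does} give (via $[\mathfrak{m}_{43},E_\pm]_{E_\mp}\neq 0$ unless $c_\pm^2=1$) is only the equality $\mu_1^{(2)}=\mu_2^{(2)}=:\mu_2$. Once that holds, one checks that $[\mathfrak{m}_{43},E_\pm]\subseteq E_\pm$ and $[E_+,E_-]=0$, so there are no further pairwise-orthogonal invariant eigensubspaces whose bracket escapes their sum; Proposition~\ref{3.4} is then silent about any link between $\mu_1$ and $b$.

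The missing ingredient is a direct appeal to Proposition~\ref{3.3} in the \emph{necessity} direction with a test vector that mixes $\mathfrak{m}_{43}$ and $M_2$. The paper takes $X=w_{43}+w_{32}+2w_{41}$: the equation $[\lambda w_{21}+X,AX]=0$ then yields two independent linear conditions on $\lambda$, and eliminating $\lambda$ produces exactly $b^2-\mu_2(\mu_2-\mu_1)=0$. (A preliminary test vector $X=w_{31}+w_{32}+w_{42}+w_{41}$ first forces $\mu_1^{(2)}=\mu_2^{(2)}$, which you could also get from Proposition~\ref{3.4}.) Only after this constraint is \emph{derived} does one turn to sufficiency and exhibit, for arbitrary $X$, the choice $\lambda=\sqrt{(\mu_2-\mu_1)/\mu_2}\,x_{43}$ that solves \eqref{22}. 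Your proposal inverts this logic, expecting the quadratic relation to fall out of the sufficiency verification; but sufficiency presupposes the relation rather than deriving it.
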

\begin{proof} Let us analyse case by case:
 
$\bullet$ $\Theta=\emptyset.$

Since $\mathfrak{k}_{\emptyset}=\{0\},$ by Proposition \ref{3.3}, $A$ is a g.o. metric if and only if $[X,AX]=0$ for every $X\in\mathfrak{m}_\emptyset.$  We write $A$ as in Proposition \ref{2.3}, for $X=w_{21}+w_{31}+w_{41},$ we have $AX=\mu_1^{(1)}w_{21}+b_1w_{43}+\mu^{(2)}_1w_{31}+b_2w_{42}+b_3w_{32}+\mu^{(3)}_2w_{41}$ and $[X,AX]=0$ if and only if
\begin{center}
$(b_2+b_3)w_{21}+(\mu^{(3)}_2-\mu^{(2)}_1)w_{43}+(b_1-b_3)w_{31}+(\mu^{(3)}_2-\mu^{(1)}_1)w_{42}+(\mu^{(2)}_1-\mu^{(1)}_1)w_{32}-(b_1+b_2)w_{41}=0,$
\end{center}

thus, $\mu^{(1)}_1=\mu^{(2)}_1=\mu^{(3)}_2$ and $b_1=-b_2=b_3=:b.$ For $X=w_{43}+w_{42}+w_{41},$ we have $AX=b_1w_{21}+\mu_2^{(1)}w_{43}++b_2w_{31}+\mu^{(2)}_2w_{42}+b_3w_{32}+\mu^{(3)}_2w_{41}$ and $[X,AX]=0$ if and only if 

\begin{center}
$(\mu^{(2)}_2-\mu^{(3)}_2)w_{21}-(b_2+b_3)w_{43}+(\mu^{(1)}_2-\mu^{(3)}_2)w_{31}+(b_3-b_1)w_{42}+(\mu^{(1)}_2-\mu^{(2)}_2)w_{32}+(b_1+b_2)w_{41}=0,$
\end{center}

concluding that $\mu^{(1)}_1=\mu^{(1)}_2=\mu^{(2)}_1=\mu^{(2)}_2=\mu^{(3)}_1=\mu^{(3)}_2=:\mu.$ It is a lengthy calculation to verify that if $\mu^{(1)}_1=\mu^{(1)}_2=\mu^{(2)}_1=\mu^{(2)}_2=\mu^{(3)}_1=\mu^{(3)}_2=:\mu$ and $b_1=-b_2=b_3=:b$, then $[X,AX]=0$ for all $X\in\mathfrak{m}_\emptyset.$

$\bullet$ $\Theta=\{\alpha_1,\alpha_3\}$

Let $A$ be a g.o. metric. By Proposition \ref{2.3}. $A$ has the form 
\begin{center}
$[A]_{\mathcal{B}_{\{\alpha_1,\alpha_3\}}}=\left(\begin{array}{cccc}
\mu_1 & 0 & 0 & 0 \\
0 & \mu_1 & 0 & 0 \\
0 & 0 & \mu_2 & 0 \\
0 & 0 & 0 & \mu_2 \\
\end{array}\right).$
\end{center}

Given $X=x_1(w_{31}-w_{42})+x_2(w_{41}+w_{32})+x_3(w_{31}+w_{42})+x_4(w_{41}-w_{32})$ written in the basis $\mathcal{B}_{\{\alpha_1,\alpha_3\}},$ we have

\begin{center}
$\begin{array}{ccl}
[X,AX] & = & \ \ \mu_1x_1x_2[w_{31}-w_{42},w_{41}+w_{32}]+\mu_2x_1x_4[w_{31}-w_{42},w_{41}-w_{32}]\\
\\
 & & +\mu_1x_1x_2[w_{41}+w_{32},w_{31}-w_{42}]+\mu_2x_2x_3[w_{41}+w_{32},w_{31}+w_{42}]\\
\\
 & & +\mu_1x_2x_3[w_{31}+w_{42},w_{41}+w_{32}]+\mu_2x_3x_4[w_{31}+w_{42},w_{41}-w_{32}]\\
\\
 & & +\mu_1x_1x_4[w_{41}-w_{32},w_{31}-w_{42}]+\mu_2x_3x_4[w_{41}-w_{32},w_{31}+w_{42}]\\
\\
& = & \ \ 2\mu_1x_1x_2(w_{21}+w_{43})-2\mu_1x_1x_2(w_{21}+w_{43})\\
\\
& & +2\mu_2x_3x_4(w_{43}-w_{21})-2\mu_2x_3x_4(w_{43}-w_{21})\\
\end{array}$
\end{center}

\ \ \ \ \ \ \ \ \ \ \ \ \ \ \ \ \ \ \ \ \ \ \ \ \ \ \ $\begin{array}{ccl}
& = & 0.
\end{array}$

Therefore $[0+X,AX]=0$ for all $X\in\mathfrak{m}_{\{\alpha_1,\alpha_3\}}$ and $A$ is g.o.

$\bullet$ $\Theta=\{\alpha_1\}.$

Since $\mathfrak{k}_{\{\alpha_1\}}=\vspan\{w_{21}\},$ then $A$ is a g.o. metric if and only if for all $X\in\mathfrak{m}_{\{\alpha_1\}},$ there exists $\lambda\in\mathbb{R}$ such that 

\begin{center}
$[\lambda w_{21}+X,AX]=0.$
\end{center}
 
By Proposition \ref{2.3} we have 

\begin{center}
$[A]_{\mathcal{B}_{\{\alpha_1\}}}=\left(\begin{array}{ccccc}
\mu^{(1)}_1 & 0 & 0 & 0 & 0 \\
0 & \mu^{(2)}_1 & 0 & b & 0 \\
0 & 0 & \mu^{(2)}_1 & 0 & -b \\
0 & b & 0 & \mu^{(2)}_2 & 0 \\
0 & 0 & -b & 0 & \mu^{(2)}_2 \\
\end{array}\right).$
\end{center}

We set $\mu_1:=\mu_1^{(1)}.$ For $X=w_{31}+w_{32}+w_{42}+w_{41},$ we take $\lambda\in\mathbb{R}$ such that $[\lambda w_{21}+X,AX]=0,$ then

\begin{center}
$\begin{array}{ccl}
0 & = & [\lambda w_{21}+X,AX] \\
\\
  & = & 2(\mu_2^{(2)}-\mu_1^{(2)})w_{43}-\lambda(\mu_1^{(2)}-b)w_{31}+\lambda(\mu^{(2)}_2-b)w_{42}+\lambda(\mu^{(2)}_1+b)w_{32}-\lambda(\mu^{(2)}_2+b)w_{41},
\end{array}$
\end{center}

by linear independence we have $\mu^{(2)}_2-\mu_1^{(2)}=0,$ i.e., $\mu^{(2)}_2=\mu_1^{(2)}=:\mu_2.$ Now, let us consider the vector $X=w_{43}+w_{32}+2w_{41}$ and $\lambda\in\mathbb{R}$ such that $[\lambda w_{21}+X,AX]=0,$ then 

\begin{center}
$\begin{array}{ccl}
0 & = & [\lambda w_{21}+X,AX] \\
\\
  & = & (2(\mu_1-\mu_2)+b-\lambda(\mu_2-2b))w_{31}+(\mu_2-\mu_1-2b+\lambda(2\mu_2-b))w_{42},
\end{array}$
\end{center}

by linear independence\begin{equation}\label{23}
2(\mu_1-\mu_2)+b-\lambda(\mu_2-2b)=0
\end{equation}
and \begin{equation}\label{24}
\mu_2-\mu_1-2b+\lambda(2\mu_2-b)=0,
\end{equation}multiplying equation \eqref{23} by $(2\mu_2-b)$ and equation \eqref{24} by $(\mu_2-2b)$, we obtain respectively\begin{equation}\label{25}
-4\mu_2^2+4\mu_1\mu_2+4b\mu_2-2b\mu_1-b^2-\lambda(2\mu_2-b)(\mu_2-2b)=0
\end{equation}
and \begin{equation}\label{26}
\mu_2^2-4b\mu_2-\mu_1\mu_2+2b\mu_1+4b^2+\lambda(2\mu_2-b)(\mu_2-2b)=0,
\end{equation}adding equations \eqref{25} and \eqref{26} we have\begin{equation}\label{27}
3(b^2-\mu_2(\mu_2-\mu_1))=0,
\end{equation}therefore $b^2-\mu_2(\mu_2-\mu_1)=0$, i.e., $b=\pm\sqrt{\mu_2(\mu_2-\mu_1)}.$

Conversely, let us suppose that $\mu_2:=\mu_1^{(2)}=\mu_2^{(2)},$ $\mu_1:=\mu^{(1)}_1$ and $b=\sqrt{\mu_2(\mu_2-\mu_1)},$ then, for $X=x_{43}w_{43}+x_{31}w_{31}+x_{42}w_{42}+x_{32}w_{32}+x_{41}w_{41}$ in $\mathfrak{m}_{\{\alpha_1\}}$ we have

\begin{center}
$\begin{array}{ccl}
[X,AX] & = & \ \ x_{43}\sqrt{\mu_2-\mu_1}(x_{32}\sqrt{\mu_2}-x_{41}\sqrt{\mu_2-\mu_1})w_{31}-x_{43}\sqrt{\mu_2-\mu_1}(x_{41}\sqrt{\mu_2}-x_{32}\sqrt{\mu_2-\mu_1})w_{42}\\
\\
 & & -x_{43}\sqrt{\mu_2-\mu_1}(x_{31}\sqrt{\mu_2}+x_{42}\sqrt{\mu_2-\mu_1})w_{32}+x_{43}\sqrt{\mu_2-\mu_1}(x_{42}\sqrt{\mu_2}+x_{31}\sqrt{\mu_2-\mu_1})w_{41}
\end{array}$
\end{center}

and for every $\lambda\in\mathbb{R}$
\begin{center}
$\begin{array}{ccl}
[\lambda w_{21},AX] & = & -\lambda\sqrt{\mu_2}(x_{32}\sqrt{\mu_2}-x_{41}\sqrt{\mu_2-\mu_1})w_{31}+\lambda\sqrt{\mu_2}(x_{41}\sqrt{\mu_2}-x_{32}\sqrt{\mu_2-\mu_1})w_{42}\\
\\
 & & +\lambda\sqrt{\mu_2}(x_{31}\sqrt{\mu_2}+x_{42}\sqrt{\mu_2-\mu_1})w_{32}-\lambda\sqrt{\mu_2}(x_{42}\sqrt{\mu_2}+x_{31}\sqrt{\mu_2-\mu_1})w_{41},
\end{array}$
\end{center}
thus
\begin{center}
$\begin{array}{ccl}
[\lambda w_{21}+X,AX] & = & \ \ (x_{43}\sqrt{\mu_2-\mu_1}-\lambda\sqrt{\mu_2})(x_{32}\sqrt{\mu_2}-x_{41}\sqrt{\mu_2-\mu_1})w_{31}\\
\\
 & & -(x_{43}\sqrt{\mu_2-\mu_1}-\lambda\sqrt{\mu_2})(x_{41}\sqrt{\mu_2}-x_{32}\sqrt{\mu_2-\mu_1})w_{42}\\
\\
 & & -(x_{43}\sqrt{\mu_2-\mu_1}-\lambda\sqrt{\mu_2})(x_{31}\sqrt{\mu_2}+x_{42}\sqrt{\mu_2-\mu_1})w_{32}\\
 \\
 & & +(x_{43}\sqrt{\mu_2-\mu_1}-\lambda\sqrt{\mu_2})(x_{42}\sqrt{\mu_2}+x_{31}\sqrt{\mu_2-\mu_1})w_{41}.
\end{array}$
\end{center}
By taking $\lambda=\sqrt{\frac{\mu_2-\mu_1}{\mu_2}}x_{43},$ we obtain $[\lambda w_{21}+X,AX]=0.$ For $b=-\sqrt{\mu_2(\mu_2-\mu_1)},$ the argument is analogous.

$\bullet$ $\Theta=\{\alpha_2\}$ or $\{\alpha_3\}.$

Let us consider the diffeomorphisms  

\begin{center}
$\begin{array}{cccc}
\varphi_i: & \mathbb{F}_{\{\alpha_1\}} & \longrightarrow & \mathbb{F}_{\{\alpha_i\}}\\
 & kK_{\{\alpha_1\}} & \longmapsto & e_i^Tke_iK_{\{\alpha_i\}}
\end{array},$ $i=2,3$
\end{center}

where $e_1=\left(\begin{array}{cccc}
0 & 1 & 0 & 0\\
0 & 0 & 1 & 0\\
1 & 0 & 0 & 0\\
0 & 0 & 0 & 1\\
\end{array}\right)$ and $e_2=\left(\begin{array}{cccc}
0 & 0 & 1 & 0\\
0 & 0 & 0 & 1\\
1 & 0 & 0 & 0\\
0 & 1 & 0 & 0\\
\end{array}\right).$ It is easy to verify that for every invariant metric

\begin{center}
$[A]_{\mathcal{B}_{\{\alpha_1\}}}=\left(\begin{array}{ccccc}
\mu^{(1)}_1 & 0 & 0 & 0 & 0 \\
0 & \mu^{(2)}_1 & 0 & b & 0 \\
0 & 0 & \mu^{(2)}_1 & 0 & -b \\
0 & b & 0 & \mu^{(2)}_2 & 0 \\
0 & 0 & -b & 0 & \mu^{(2)}_2 \\
\end{array}\right)$
\end{center}

we have

\begin{center}
$[\varphi_2^*A]_{\mathcal{B}_{\{\alpha_2\}}}=\left(\begin{array}{ccccc}
\mu^{(1)}_1 & 0 & 0 & 0 & 0 \\
0 & \mu^{(2)}_1 & 0 & -b & 0 \\
0 & 0 & \mu^{(2)}_1 & 0 & b \\
0 & -b & 0 & \mu^{(2)}_2 & 0 \\
0 & 0 & b & 0 & \mu^{(2)}_2 \\
\end{array}\right)$ and 
$[\varphi_3^*A]_{\mathcal{B}_{\{\alpha_3\}}}=\left(\begin{array}{ccccc}
\mu^{(1)}_1 & 0 & 0 & 0 & 0 \\
0 & \mu^{(2)}_1 & 0 & b & 0 \\
0 & 0 & \mu^{(2)}_1 & 0 & -b \\
0 & b & 0 & \mu^{(2)}_2 & 0 \\
0 & 0 & -b & 0 & \mu^{(2)}_2 \\
\end{array}\right).$
\end{center}

By Proposition \ref{2.3}, every invariant metric in $\mathbb{F}_{\{\alpha_i\}}$ has the form $\varphi_i^*A.$ Since $A$ is g.o. if and only if $\varphi_i^*A$ is g.o. then we obtain the result.
\end{proof}
\subsection{Flags of $B_l$, $l\geq 2$} We consider $(\cdot,\cdot)$ as in \eqref{B1} and the $(\cdot,\cdot)-$orthonormal basis in \eqref{Special1}.

\begin{prop}\label{3.7} Let $\mathbb{F}_\Theta$ be a flag of $B_l,$ $l\geq 5$ and $A$ an invariant metric as in Proposition \ref{2.6}.

$a)$ If $\alpha_l\notin\Theta,$ then  $(\mathbb{F}_\Theta,A)$ is a g.o. space if and only if 

\begin{equation}\label{B7}
\left\{\begin{array}{l}
\mu^{(1)}=...=\mu^{(r)}=:\mu,\\
\\
\lambda^{(mn)}_1=\lambda^{(mn)}_2=:\lambda, \text{ for all } (m,n),\\
\\
b_{mn}=:b, \text{ for all } (m,n),\\
\\
\gamma^{(i)}=:\gamma \text{ for all } i\in\{1,...,r\} \text{ with } l_i>1,\\
\\
\mu-\lambda=b\\
\\
\gamma=\frac{\lambda^2-b^2}{\lambda}
\end{array}\right.
\end{equation}

$b)$ If $\alpha_l\in\Theta,$ then  $(\mathbb{F}_\Theta,A)$ is a g.o. space if and only if 

\begin{equation}\label{B8}
\left\{\begin{array}{l}
\mu^{(1)}=...=\mu^{(r-1)}=:\mu,\\
\\
\rho^{(1)}=...=\rho^{(r-1)}=:\rho\\
\\
\lambda^{(mn)}_1=\lambda^{(mn)}_2=:\lambda, \text{ for all } (m,n),\\
\\
b_{mn}=:b, \text{ for all } (m,n),\\
\\
\gamma^{(i)}=:\gamma \text{ for all } i\in\{1,...,r-1\} \text{ with } l_i>1,\\
\\
\mu-\lambda=b=\lambda-\rho\\
\\
\gamma=\frac{2\mu\rho}{\mu+\rho}=\frac{\lambda^2-b^2}{\lambda}
\end{array}\right.
\end{equation}
\end{prop}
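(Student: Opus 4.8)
The plan is to combine the g.o.\ criterion of Proposition \ref{3.3} with the eigenvalue obstruction of Proposition \ref{3.4}, running the scheme that settled the flags of $A_l$ in Propositions \ref{3.5}--\ref{3.6}. Throughout I would work in the $(\cdot,\cdot)$-orthogonal basis \eqref{Special1} of $\mathfrak{k}\cong\mathfrak{so}(l+1)\oplus\mathfrak{so}(l)$, having tabulated once and for all the brackets $[v_a,v_b]$, $[v_a,w_{bc}]$, $[v_a,u_{bc}]$, $[w_{ab},w_{cd}]$, $[w_{ab},u_{cd}]$, $[u_{ab},u_{cd}]$ of the basis vectors; every subsequent computation reduces to these.

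\emph{Necessity.} Assume $(\mathbb{F}_\Theta,A)$ is a g.o.\ space, with $A$ as in Proposition \ref{2.6}. Since $A$ commutes with $K_\Theta$, its eigenspaces are $K_\Theta$-invariant; inside each $M_{mn}=W_{mn}\oplus U_{mn}$ they are spanned by the combinations $w\pm u$. For indices in distinct blocks one then exhibits, for every pair of equivalent irreducible pieces among the $V_i$, $W_{mn}$, $U_{mn}$, $U_i$ of Proposition \ref{2.4}, a bracket landing in a \emph{third} $K_\Theta$-invariant irreducible subspace that is $(\cdot,\cdot)$-orthogonal to the two given ones; Proposition \ref{3.4}(a)--(b) then equates the corresponding eigenvalues, and chaining these equalities (together with a direct application of Proposition \ref{3.3} to a vector supported in a single $M_{mn}$) yields $\mu^{(1)}=\dots=\mu^{(r)}=:\mu$, $\lambda^{(mn)}_1=\lambda^{(mn)}_2=:\lambda$, $b_{mn}=:b$ and $\gamma^{(i)}=:\gamma$, all independent of the indices. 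The two remaining, genuinely nonlinear relations $\mu-\lambda=b$ and $\gamma=(\lambda^2-b^2)/\lambda$ are then extracted exactly as in the proof of Proposition \ref{3.6}: feed into \eqref{22} a test vector $X$ supported on one $W_{mn}$ together with a single $v_k$ (for the first) and, separately, on one $W_{mn}$ together with a diagonal generator of $U_m$ (for the second); compute $AX$ from the block form of Proposition \ref{2.6}; and require $[Z+X,AX]=0$ to be solvable for $Z\in\mathfrak{k}_\Theta=\Cent_{\mathfrak{k}}(H_\Theta)$. Matching coefficients in the basis \eqref{Special1} turns each requirement into a small linear system in the entries of $Z$ whose consistency is precisely the asserted relation. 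When $\alpha_l\in\Theta$ the same argument applies, now also with the pair $(V_i)_1,(V_i)_2$ of Proposition \ref{2.4}(d) (parameters $\rho^{(i)}=:\rho$, $\mu^{(i)}=:\mu$): a test vector mixing $(V_i)_1$, $(V_i)_2$ and a $W_{mn}$ yields the arithmetic-progression relation $\mu-\lambda=b=\lambda-\rho$, whereupon $\gamma=(\lambda^2-b^2)/\lambda$ is equivalent to $\gamma=2\mu\rho/(\mu+\rho)$; this gives \eqref{B8}.

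\emph{Sufficiency, and the main obstacle.} Conversely, assume \eqref{B7} (resp.\ \eqref{B8}). Given $X\in\mathfrak{m}_\Theta$, decompose it along the pieces of Proposition \ref{2.4}. Since the bracket couples only neighbouring blocks, it suffices to solve \eqref{22} for $X$ supported on two or three blocks and then assemble. For such $X$ I would write down the geodesic vector $Z\in\mathfrak{k}_\Theta$ explicitly as a fixed linear expression in the coordinates of $X$ — in the spirit of Proposition \ref{3.6}, where one takes $Z=\sqrt{(\mu_2-\mu_1)/\mu_2}\,x_{43}\,w_{21}$ — with coefficients that the relations $\mu-\lambda=b$ and $\gamma\lambda=\lambda^2-b^2$ render consistent; substituting back and using the tabulated brackets gives $[Z+X,AX]=0$. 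The delicate point of the whole argument is exactly this sufficiency step: one must organise the block-by-block bookkeeping so that the locally defined geodesic vectors glue into a single $Z\in\mathfrak{k}_\Theta$ valid for all components of $X$ at once, and check that the mixed terms between non-neighbouring blocks cancel identically. The necessity direction, by contrast, only consumes finitely many well-chosen test vectors.
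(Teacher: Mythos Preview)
Your overall shape is right, but two concrete points separate your sketch from a complete proof, and both matter.

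\textbf{Necessity.} You propose to apply Proposition~\ref{3.4} to the irreducible pieces $W_{mn}$, $U_{mn}$. That proposition requires the subspaces to lie in \emph{eigenspaces} of $A$, and with $b_{mn}\neq 0$ neither $W_{mn}$ nor $U_{mn}$ does; nor are the eigenspaces inside $M_{mn}$ spanned by $w\pm u$ unless you already know $\lambda_1^{(mn)}=\lambda_2^{(mn)}$. The paper avoids this circularity by using Proposition~\ref{3.3} directly: for a test vector such as $X=w_{\tilde l_{m-1}+1,\tilde l_{n-1}+1}+w_{\tilde l_{m-1}+1,\tilde l_n+1}$ one has $AX\in M_{mn}\oplus M_{m,n+1}$, hence $[Z,AX]\in M_{mn}\oplus M_{m,n+1}$ for every $Z\in\mathfrak{k}_\Theta$, while $[X,AX]$ lands in $M_{n+1,n}$; linear independence then forces $[X,AX]=0$, giving $\lambda_1^{(m,n+1)}=\lambda_1^{(mn)}$ and $b_{m,n+1}=b_{mn}$ without ever invoking Proposition~\ref{3.4} on non-eigenspaces. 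The same device, with $X=v_{\tilde l_1+1}+w_{\tilde l_1+1,1}$ and $X=v_{\tilde l_1+1}+u_{\tilde l_1+1,1}$, yields $\lambda_1+b=\mu=\lambda_2+b$, so $\lambda_1=\lambda_2$ falls out \emph{after} the relation with $\mu$, not from an eigenspace-bracket argument.

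\textbf{Sufficiency.} Your plan to solve \eqref{22} on two- or three-block test vectors and then ``assemble'' is exactly where the paper's argument diverges, and you are right to flag it as delicate: the local geodesic vectors do not glue in any obvious way, because cross terms $[X_{mn},AY_{m'n'}]$ with a shared index do not cancel block by block. The paper circumvents this entirely with a single global choice. Write each $Y_i\in U_i$, $X_{mn}\in W_{mn}$, $Y_{mn}\in U_{mn}$ in the matrix form of \eqref{Special1} and define the ``swap'' $\tilde Y_i$, $\tilde X_{mn}$, $\tilde Y_{mn}$ by interchanging the diagonal and off-diagonal $l\times l$ blocks; then $AX_{mn}=\lambda X_{mn}+b\tilde X_{mn}$, $AY_{mn}=\lambda Y_{mn}+b\tilde Y_{mn}$, and one has the identities $[\tilde Y_i,X_{mn}]=[Y_i,\tilde X_{mn}]$, $[\tilde Y_i,\tilde X_{mn}]=[Y_i,X_{mn}]$, etc. With these in hand the single element
\[
Z=-\frac{b}{\lambda}\sum_{l_i>1}\tilde Y_i\in\mathfrak{k}_\Theta
\]
makes $[Z+X,AX]$ collapse to a sum of brackets each multiplied by one of $\lambda+b-\mu$, $\lambda-\gamma-\tfrac{b^2}{\lambda}$, $\gamma-\mu+\tfrac{b\mu}{\lambda}$, all of which vanish under \eqref{B7}. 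No block-by-block gluing is needed. In part (b) the same $Z$ works, the additional pieces $(V_i)_1$, $(V_i)_2$ contributing coefficients $\rho-\gamma+\tfrac{b\rho}{\lambda}$ and $\mu-\gamma-\tfrac{b\mu}{\lambda}$, which vanish under \eqref{B8}. This global ansatz for $Z$, together with the tilde identities, is the missing ingredient in your sufficiency step.
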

\begin{proof}
$a)$ Let us suppose that $(\mathbb{F}_\Theta,A)$ is a g.o. space. We have that for $i<j$, $V_i$ and $V_j$  are $(\cdot,\cdot)-$orthogonal, are $K_\Theta-$invariant, are contained in the eigenspaces corresponding to the eigenvalues $\mu^{(i)}$ and $\mu^{(j)}$ respectively and 

\begin{center}
$w_{\tilde{l}_{j-1}+1,\tilde{l}_{i-1}+1}+u_{\tilde{l}_{j-1}+1,\tilde{l}_{i-1}+1}=\displaystyle\left[v_{l_{i-1}+1},v_{l_{i-1}+1}\right]\in(V_i\oplus V_j)^{\perp}\cap\left[V_i,V_j\right]$
\end{center}
then, by Proposition \ref{3.4}, $\mu^{(i)}=\mu^{(j)}=\mu.$ For $X=w_{\tilde{l}_{m-1}+1,\tilde{l}_{n-1}+1}+w_{\tilde{l}_{m-1}+1,\tilde{l}_{n}+1},$ there exists a
$Z\in\mathfrak{k}_\Theta$ such that $[Z+X,AX]=0,$ but 

\begin{center}
$\begin{array}{ccl}
\left[X,AX\right] & = &\ \ \ \left[w_{\tilde{l}_{m-1}+1,\tilde{l}_{n-1}+1}+w_{\tilde{l}_{m-1}+1,\tilde{l}_{n}+1},\lambda_1^{(mn)}w_{\tilde{l}_{m-1}+1,\tilde{l}_{n-1}+1}+b_{mn}u_{\tilde{l}_{m-1}+1,\tilde{l}_{n-1}+1}\right]\\
\\
 & & +\left[w_{\tilde{l}_{m-1}+1,\tilde{l}_{n-1}+1}+w_{\tilde{l}_{m-1}+1,\tilde{l}_{n}+1},\lambda_1^{(m,n+1)}w_{\tilde{l}_{m-1}+1,\tilde{l}_{n}+1}+b_{m,n+1}u_{\tilde{l}_{m-1}+1,\tilde{l}_{n}+1}\right]\\
 \\
 & = &\lambda^{(m,n+1)}_1\left[w_{\tilde{l}_{m-1}+1,\tilde{l}_{n-1}+1},w_{\tilde{l}_{m-1}+1,\tilde{l}_{n}+1}\right]+b_{m,n+1}\left[w_{\tilde{l}_{m-1}+1,\tilde{l}_{n-1}+1},u_{\tilde{l}_{m-1}+1,\tilde{l}_{n}+1}\right]\\
\\
 &  &+\lambda^{(mn)}_1\left[w_{\tilde{l}_{m-1}+1,\tilde{l}_{n}+1},w_{\tilde{l}_{m-1}+1,\tilde{l}_{n-1}+1}\right]+b_{mn}\left[w_{\tilde{l}_{m-1}+1,\tilde{l}_{n}+1},u_{\tilde{l}_{m-1}+1,\tilde{l}_{n-1}+1}\right]\\
\\
 & = & (\lambda^{(m,n+1)}_1-\lambda^{(mn)}_1)w_{\tilde{l}_{n}+1,\tilde{l}_{n-1}+1}+(b_{m,n+1}-b_{mn})u_{\tilde{l}_{n}+1,\tilde{l}_{n-1}+1}\in M_{n+1,n}
\end{array}$
\end{center}

and since $M_{mn}\oplus M_{m,n+1}$ is $\mathfrak{k}_\Theta-$invariant, then $[Z,AX]\in M_{mn}\oplus M_{m,n+1}.$ By linear independence, $[Z+X,AX]=0\Rightarrow [Z,AX]=0=[X,AX],$ thus $b_{m,n+1}=b_{mn}$ and $\lambda^{(m,n+1)}_1=\lambda_1^{(mn)}.$ By taking $X=u_{\tilde{l}_{m-1}+1,\tilde{l}_{n-1}+1}+u_{\tilde{l}_{m-1}+1,\tilde{l}_{n}+1}$, $w_{\tilde{l}_{m-1}+1,\tilde{l}_{n-1}+1}+w_{\tilde{l}_{m}+1,\tilde{l}_{n-1}+1}$ or $u_{\tilde{l}_{m-1}+1,\tilde{l}_{n-1}+1}+u_{\tilde{l}_{m}+1,\tilde{l}_{n-1}+1}$ we obtain that

\begin{center}
$\begin{array}{ccl}
[X,AX] & = &(\lambda_2^{(m,n+1)}-\lambda_2^{(mn)})w_{\tilde{l}_{n}+1,\tilde{l}_{n-1}+1}+(b_{m,n+1}-b_{mn})u_{\tilde{l}_{n}+1,\tilde{l}_{n-1}+1},\\
\\
 & & (\lambda_1^{(m+1,n)}-\lambda_1^{(mn)})w_{\tilde{l}_{m}+1,\tilde{l}_{m-1}+1}+(b_{m+1,n}-b_{mn})u_{\tilde{l}_{m}+1,\tilde{l}_{m-1}+1}\\
\\ 
 & or & (\lambda_2^{(m+1,n)}-\lambda_2^{(mn)})w_{\tilde{l}_{m}+1,\tilde{l}_{m-1}+1}+(b_{m+1,n}-b_{mn})u_{\tilde{l}_{m}+1,\tilde{l}_{m-1}+1}
\end{array}$
\end{center}
respectively. As before, this implies $\lambda^{(m,n+1)}_2=\lambda_2^{(mn)},$ $\lambda_1^{(m+1,n)}=\lambda_1^{(mn)},$ $\lambda_2^{(m+1,n)}=\lambda_2^{(mn)}$ and $b_{m+1,n}=b_{mn}.$ Since this argument works for every pair $(m,n),$ then
\begin{center}
$\lambda^{(mn)}_1=\lambda^{(m'n')}_1=:\lambda_1$\\
$\lambda^{(mn)}_2=\lambda^{(m'n')}_2=:\lambda_2$\\
$b_{mn}=b_{m'n'}=:b$
\end{center} 
for all $m,n,m',n'.$ For $X=v_{\tilde{l}_1+1}+w_{\tilde{l}_1+1,1},$ take $Z\in\mathfrak{k}_\Theta$ such hat $[Z+X,AX]=0,$ then

\begin{center}
$\begin{array}{ccl}
\left[X,AX\right] & = & \left[v_{\tilde{l}_{1}+1}+w_{\tilde{l}_{1}+1,1},\mu\ v_{\tilde{l}_{1}+1}+\lambda_1w_{\tilde{l}_{1}+1,1}+b\ u_{\tilde{l}_{1}+1,1}\right]\\
 \\
 & = &\lambda_1\left[v_{\tilde{l}_{1}+1},w_{\tilde{l}_{1}+1,1}\right]+b\left[v_{\tilde{l}_{1}+1},u_{\tilde{l}_{1}+1,1}\right]+\mu\left[w_{\tilde{l}_{1}+1,1},v_{\tilde{l}_{1}+1}\right]\\
\\
 & = & (\lambda_1+b-\mu)v_1\in V_1
\end{array}$
\end{center}
and $[Z,AX]\in V_2\oplus M_{21}$. Thus, by linear independence we conclude that $\lambda_1+b-\mu=0$. Using the same argument for $X=v_{\tilde{l}_1+1}+u_{\tilde{l}_1+1,1},$ we can show that $\lambda_2+b-\mu=0$. Therefore, $\lambda_1=\lambda_2=:\lambda$ and $b=\mu-\lambda.$ 

Next, we will prove that $\gamma^{(i)}=\gamma^{(j)}$ for all $i, j$ with $l_i,l_j>1.$ In fact, if $l_i>1$ and $i\leq r-1$, take
\begin{center}
$X=v_{\tilde{l}_{i-1}+1}+u_{\tilde{l}_{i-1}+2,\tilde{l}_{i-1}+1}+w_{\tilde{l}_{i}+1,\tilde{l}_{i-1}+1}$
\end{center}
and $Z\in\mathfrak{k}_\Theta$ such that $[Z+X,AX]=0.$ Then, $AX=\mu\ v_{\tilde{l}_{i-1}+1}+\gamma^{(i)}u_{\tilde{l}_{i-1}+2,\tilde{l}_{i-1}+1}+\lambda\ w_{\tilde{l}_{i}+1,\tilde{l}_{i-1}+1}+b\ u_{\tilde{l}_{i}+1,\tilde{l}_{i-1}+1}$ and 

\begin{center}
$\begin{array}{ccl}
\left[X,AX\right] & = & \gamma^{(i)}\left[v_{\tilde{l}_{i-1}+1},u_{\tilde{l}_{i-1}+2,\tilde{l}_{i-1}+1}\right]+\lambda\left[v_{\tilde{l}_{i-1}+1},w_{\tilde{l}_{i}+1,\tilde{l}_{i-1}+1}\right]+b\left[v_{\tilde{l}_{i-1}+1},u_{\tilde{l}_{i}+1,\tilde{l}_{i-1}+1}\right]\\
 \\
 & &+\mu\left[u_{\tilde{l}_{i-1}+2,\tilde{l}_{i-1}+1},v_{\tilde{l}_{i-1}+1}\right]+\lambda\left[u_{\tilde{l}_{i-1}+2,\tilde{l}_{i-1}+1},w_{\tilde{l}_{i}+1,\tilde{l}_{i-1}+1}\right]\\
\\
& &+b\left[u_{\tilde{l}_{i-1}+2,\tilde{l}_{i-1}+1},u_{\tilde{l}_{i}+1,\tilde{l}_{i-1}+1}\right]+\mu\left[w_{\tilde{l}_{i}+1,\tilde{l}_{i-1}+1},v_{\tilde{l}_{i-1}+1}\right]\\
\\
 & & +\gamma^{(i)}\left[w_{\tilde{l}_{i}+1,\tilde{l}_{i-1}+1},u_{\tilde{l}_{i-1}+2,\tilde{l}_{i-1}+1}\right]\\
 \\
 & = & (\mu-\gamma^{(i)})v_{\tilde{l}_{i-1}+2}+(\mu-\lambda-b)v_{\tilde{l}_{i}+1}+(\lambda-\gamma^{(i)})u_{\tilde{l}_i+1,\tilde{l}_{i-1}+2}+b\ w_{\tilde{l}_i+1,\tilde{l}_{i-1}+2}\\
 \\
& = & (\mu-\gamma^{(i)})v_{\tilde{l}_{i-1}+2}+(\lambda-\gamma^{(i)})u_{\tilde{l}_i+1,\tilde{l}_{i-1}+2}+b\ w_{\tilde{l}_i+1,\tilde{l}_{i-1}+2}.
\end{array}$
\end{center}
We can write $Z=\displaystyle\sum\limits_{l_j>1}\left(\sum\limits_{1\leq t<s\leq l_j}z^{(j)}_{st}w_{\tilde{l}_{j-1}+s,\tilde{l}_{j-1}+t}\right),$ $z^{(j)}_{st}\in\mathbb{R},$ so

\begin{center}
$\begin{array}{ccl}
\left[Z,AX\right] & = & \mu z^{(i)}_{21}v_{\tilde{l}_{i-1}+2}+bz^{(i)}_{21}u_{\tilde{l}_i+1,\tilde{l}_{i-1}+2}+\lambda z^{(i)}_{21}w_{\tilde{l}_i+1,\tilde{l}_{i-1}+2}+Z',
\end{array}$
\end{center}
where $Z'$ is linearly independent of $\{v_{\tilde{l}_{i-1}+2},u_{\tilde{l}_i+1,\tilde{l}_{i-1}+2},w_{\tilde{l}_i+1,\tilde{l}_{i-1}+2}\}.$ Therefore,

\begin{center}
$[Z+X,AX]=(\mu-\gamma^{(i)}+\mu z^{(i)}_{21})v_{\tilde{l}_{i-1}+2}+(z^{(i)}_{21}b+\lambda-\gamma^{(i)})u_{\tilde{l}_i+1,\tilde{l}_{i-1}+2}+(b+\lambda z^{(i)}_{21})w_{\tilde{l}_i+1,\tilde{l}_{i-1}+2}+Z',$
\end{center}
so $Z'=0$ and
\begin{center}
$\left\{\begin{array}{l}
\mu-\gamma^{(i)}+\mu z^{(i)}_{21}=0\\
\\
z^{(i)}_{21}b+\lambda-\gamma^{(i)}=0\\
\\
b+\lambda z^{(i)}_{21}=0
\end{array}.\right.$
\end{center}
This implies $\gamma^{(i)}=\frac{\lambda^2-b^2}{\lambda}$ (and $z^{(i)}_{21}=-\frac{b}{\lambda}).$ If $i=r$ and $l_r>1$, we use the last argument but taking $X=v_{\tilde{l}_{r-1}+1}+u_{\tilde{l}_{r-1}+2,\tilde{l}_{r-1}+1}+w_{\tilde{l}_{r-1}+1,\tilde{l}_{r-2}+1}$ to conclude $\gamma^{(r)}=\frac{\lambda^2-b^2}{\lambda}.$ Therefore $\gamma^{(i)}=\gamma^{(j)}=:\gamma$ for all $i,j$ with $l_i,l_j>1$ and $\gamma=\frac{\lambda^2-b^2}{\lambda}.$

Conversely, let us suppose that $A$ satisfies the equations in \eqref{B7}. We can write every $X\in\mathfrak{m}_\Theta$ as

\begin{center}
$X=\displaystyle\sum\limits_{i=1}^r v^{(i)}+\sum\limits_{\begin{subarray}{c}i=1\\ l_i>1\end{subarray}}^{r}Y_i+\sum\limits_{1\leq n<m\leq r}X_{mn}+\sum\limits_{1\leq n<m\leq r}Y_{mn},$
\end{center}

where $v^{(i)}\in V_i,$ $Y_i\in U_i,$ $X_{mn}\in W_{mn},$ and $Y_{mn}\in U_{mn}.$ If

\begin{center}
$X_{mn}=\left(\begin{array}{ccc}
0 & \textbf{0} & \textbf{0}\\
\textbf{0} & C_{mn} &\textbf{0}\\
\textbf{0} & \textbf{0} & C_{mn}\\
\end{array}\right),$ $Y_{mn}=\left(\begin{array}{ccc}
0 & \textbf{0} & \textbf{0}\\
\textbf{0} & \textbf{0} & D_{mn}\\
\textbf{0} & D_{mn} & \textbf{0}\\
\end{array}\right)$ and \ $Y_{i}=\left(\begin{array}{ccc}
0 & \textbf{0} & \textbf{0}\\
\textbf{0} & \textbf{0} & D_{i}\\
\textbf{0} & D_{i} & \textbf{0}\\
\end{array}\right)$
\end{center}
we denote by
\begin{center}
$\tilde{X}_{mn}=\left(\begin{array}{ccc}
0 & \textbf{0} & \textbf{0}\\
\textbf{0} & \textbf{0} & C_{mn}\\
\textbf{0} & C_{mn} & \textbf{0}\\
\end{array}\right),$ $\tilde{Y}_{mn}=\left(\begin{array}{ccc}
0 & \textbf{0} & \textbf{0}\\
\textbf{0} & D_{mn} & \textbf{0}\\
\textbf{0} & \textbf{0} & D_{mn}\\
\end{array}\right)$ and \ $\tilde{Y}_{i}=\left(\begin{array}{ccc}
0 & \textbf{0} & \textbf{0}\\
\textbf{0} & D_{i} & \textbf{0}\\
\textbf{0} & \textbf{0} & D_{i}\\
\end{array}\right),$
\end{center}
so we have $AX_{mn}=\lambda X_{mn}+b\tilde{X}_{mn},$ $AY_{mn}=b\tilde{Y}_{mn}+\lambda Y_{mn}$ and 
\begin{center}
$AX=\displaystyle\mu\sum\limits_{i=1}^r v^{(i)}+\gamma\sum\limits_{\begin{subarray}{c}i=1\\ l_i>1\end{subarray}}^{r}Y_i+\lambda\sum\limits_{1\leq n<m\leq r}X_{mn}+b\sum\limits_{1\leq n<m\leq r}\tilde{X}_{mn}+b\sum\limits_{1\leq n<m\leq r}\tilde{Y}_{mn}+\lambda\sum\limits_{1\leq n<m\leq r}Y_{mn}.$
\end{center}
Then
\begin{center}
$\begin{array}{ccl}
\displaystyle [X,AX] & = & \displaystyle\gamma\left[\sum\limits_{i=1}^rv^{(i)},\sum\limits_{\begin{subarray}{c}i=1\\l_i>1\end{subarray}}^rY_i\right]+\lambda\left[\sum\limits_{i=1}^rv^{(i)},\sum\limits_{1\leq n<m\leq r}X_{mn}\right]+b\left[\sum\limits_{i=1}^rv^{(i)},\sum\limits_{1\leq n<m\leq r}\tilde{X}_{mn}\right]\\
\\
 & & \displaystyle+b\left[\sum\limits_{i=1}^rv^{(i)},\sum\limits_{1\leq n<m\leq r}\tilde{Y}_{mn}\right]+\lambda\left[\sum\limits_{i=1}^rv^{(i)},\sum\limits_{1\leq n<m\leq r}Y_{mn}\right]+\mu\left[\sum\limits_{\begin{subarray}{c}i=1\\l_i>1\end{subarray}}^rY_i,\sum\limits_{i=1}^rv^{(i)}\right]\\
 \\
 & & \displaystyle+\lambda\left[\sum\limits_{\begin{subarray}{c}i=1\\l_i>1\end{subarray}}^rY_i,\sum\limits_{1\leq n<m\leq r}X_{mn}\right]+b\left[\sum\limits_{\begin{subarray}{c}i=1\\l_i>1\end{subarray}}^rY_i,\sum\limits_{1\leq n<m\leq r}\tilde{X}_{mn}\right]+b\left[\sum\limits_{\begin{subarray}{c}i=1\\l_i>1\end{subarray}}^rY_i,\sum\limits_{1\leq n<m\leq r}\tilde{Y}_{mn}\right]\\
 \\
 & & \displaystyle+\lambda\left[\sum\limits_{\begin{subarray}{c}i=1\\l_i>1\end{subarray}}^rY_i,\sum\limits_{1\leq n<m\leq r}Y_{mn}\right]+\mu\left[\sum\limits_{1\leq n<m\leq r}X_{mn},\sum\limits_{i=1}^rv^{(i)}\right]+\gamma\left[\sum\limits_{1\leq n<m\leq r}X_{mn},\sum\limits_{\begin{subarray}{c}i=1\\l_i>1\end{subarray}}^rY_i\right]\\
 \\
 & & \displaystyle+b\left[\sum\limits_{1\leq n<m\leq r}X_{mn},\sum\limits_{1\leq n<m\leq r}\tilde{X}_{mn}\right]+b\left[\sum\limits_{1\leq n<m\leq r}X_{mn},\sum\limits_{1\leq n<m\leq r}\tilde{Y}_{mn}\right]\\
 \\
 & & \displaystyle+\lambda\left[\sum\limits_{1\leq n<m\leq r}X_{mn},\sum\limits_{1\leq n<m\leq r}Y_{mn}\right]+\mu\left[\sum\limits_{1\leq n<m\leq r}Y_{mn},\sum\limits_{i=1}^rv^{(i)}\right]\\
 \\
 & & \displaystyle+\gamma\left[\sum\limits_{1\leq n<m\leq r}Y_{mn},\sum\limits_{\begin{subarray}{c}i=1\\l_i>1\end{subarray}}^rY_i\right]+\lambda\left[\sum\limits_{1\leq n<m\leq r}Y_{mn},\sum\limits_{1\leq n<m\leq r}X_{mn}\right]\\
\\
 & & \displaystyle+b\left[\sum\limits_{1\leq n<m\leq r}Y_{mn},\sum\limits_{1\leq n<m\leq r}\tilde{X}_{mn}\right]+b\left[\sum\limits_{1\leq n<m\leq r}Y_{mn},\sum\limits_{1\leq n<m\leq r}\tilde{Y}_{mn}\right],
\end{array}$
\end{center}since
\begin{center}
$\begin{array}{l}
\displaystyle\left[\sum\limits_{1\leq n<m\leq r}X_{mn},\sum\limits_{1\leq n<m\leq r}\tilde{X}_{mn}\right]=0=\left[\sum\limits_{1\leq n<m\leq r}Y_{mn},\sum\limits_{1\leq n<m\leq r}\tilde{Y}_{mn}\right],\\
\\
\displaystyle\left[\sum\limits_{i=1}^rv^{(i)},\sum\limits_{1\leq n<m\leq r}X_{mn}\right]=\left[\sum\limits_{i=1}^rv^{(i)},\sum\limits_{1\leq n<m\leq r}\tilde{X}_{mn}\right],\\
\\
\displaystyle\left[\sum\limits_{i=1}^rv^{(i)},\sum\limits_{1\leq n<m\leq r}Y_{mn}\right]=\left[\sum\limits_{i=1}^rv^{(i)},\sum\limits_{1\leq n<m\leq r}\tilde{Y}_{mn}\right] \text{ and }\\ 
\end{array}$
\end{center}
\begin{center}
$\begin{array}{l}
\displaystyle\left[\sum\limits_{1\leq n<m\leq r}X_{mn},\sum\limits_{1\leq n<m\leq r}\tilde{Y}_{mn}\right]=-\left[\sum\limits_{1\leq n<m\leq r}Y_{mn},\sum\limits_{1\leq n<m\leq r}\tilde{X}_{mn}\right],
\end{array}$
\end{center}
then
\begin{center}
$\begin{array}{ccl}
\displaystyle [X,AX] & = & \displaystyle(\gamma-\mu)\left[\sum\limits_{i=1}^rv^{(i)},\sum\limits_{\begin{subarray}{c}i=1\\l_i>1\end{subarray}}^rY_i\right]+(\lambda+b-\mu)\left[\sum\limits_{i=1}^rv^{(i)},\sum\limits_{1\leq n<m\leq r}X_{mn}\right]\\
\\
& & \displaystyle+(\lambda+b-\mu)\left[\sum\limits_{i=1}^rv^{(i)},\sum\limits_{1\leq n<m\leq r}Y	_{mn}\right]+(\lambda-\gamma)\left[\sum\limits_{\begin{subarray}{c}i=1\\l_i>1\end{subarray}}^rY_i,\sum\limits_{1\leq n<m\leq r}X_{mn}\right]\\
\\
& & \displaystyle+(\lambda-\gamma)\left[\sum\limits_{\begin{subarray}{c}i=1\\l_i>1\end{subarray}}^rY_i,\sum\limits_{1\leq n<m\leq r}Y_{mn}\right]+b\left[\sum\limits_{\begin{subarray}{c}i=1\\l_i>1\end{subarray}}^rY_i,\sum\limits_{1\leq n<m\leq r}\tilde{X}_{mn}\right]\\
\\
& & \displaystyle+b\left[\sum\limits_{\begin{subarray}{c}i=1\\l_i>1\end{subarray}}^rY_i,\sum\limits_{1\leq n<m\leq r}\tilde{Y}_{mn}\right]\\
\\
& = & \displaystyle(\gamma-\mu)\left[\sum\limits_{i=1}^rv^{(i)},\sum\limits_{\begin{subarray}{c}i=1\\l_i>1\end{subarray}}^rY_i\right]+(\lambda-\gamma)\left[\sum\limits_{\begin{subarray}{c}i=1\\l_i>1\end{subarray}}^rY_i,\sum\limits_{1\leq n<m\leq r}X_{mn}\right]\\
\end{array}$
\end{center}
\begin{center}
$\begin{array}{ccl}
 & & \displaystyle+(\lambda-\gamma)\left[\sum\limits_{\begin{subarray}{c}i=1\\l_i>1\end{subarray}}^rY_i,\sum\limits_{1\leq n<m\leq r}Y_{mn}\right]+b\left[\sum\limits_{\begin{subarray}{c}i=1\\l_i>1\end{subarray}}^rY_i,\sum\limits_{1\leq n<m\leq r}\tilde{X}_{mn}\right]\\
 \\
 & & \displaystyle+b\left[\sum\limits_{\begin{subarray}{c}i=1\\l_i>1\end{subarray}}^rY_i,\sum\limits_{1\leq n<m\leq r}\tilde{Y}_{mn}\right].\\
\end{array}$
\end{center}
Last equality is because $\lambda+b-\mu=0.$ By taking $Z=\displaystyle-\frac{b}{\lambda}\sum\limits_{\begin{subarray}{c}i=1\\l_i>1\end{subarray}}^r\tilde{Y}_i,$ we obtain
\begin{center}
$\begin{array}{ccl}
\displaystyle [Z,AX] & = & \displaystyle-\frac{b\mu}{\lambda}\left[\sum\limits_{\begin{subarray}{c}i=1\\l_i>1\end{subarray}}^r\tilde{Y}_i,\sum\limits_{i=1}^rv^{(i)}\right]-b\left[\sum\limits_{\begin{subarray}{c}i=1\\l_i>1\end{subarray}}^r\tilde{Y}_i,\sum\limits_{1\leq n<m\leq r}X_{mn}\right]-b\left[\sum\limits_{\begin{subarray}{c}i=1\\l_i>1\end{subarray}}^r\tilde{Y}_i,\sum\limits_{1\leq n<m\leq r}Y_{mn}\right]\\
\end{array}$
\end{center}
\begin{center}
$\begin{array}{ccl}
& & \displaystyle-\frac{b^2}{\lambda}\left[\sum\limits_{\begin{subarray}{c}i=1\\l_i>1\end{subarray}}^r\tilde{Y}_i,\sum\limits_{1\leq n<m\leq r}\tilde{X}_{mn}\right]-\frac{b^2}{\lambda}\left[\sum\limits_{\begin{subarray}{c}i=1\\l_i>1\end{subarray}}^r\tilde{Y}_i,\sum\limits_{1\leq n<m\leq r}\tilde{Y}_{mn}\right],\\
\end{array}$
\end{center}
but 
\begin{center}
$\begin{array}{l}
\displaystyle\left[\sum\limits_{\begin{subarray}{c}i=1\\l_i>1\end{subarray}}^r\tilde{Y}_i,\sum\limits_{i=1}^rv^{(i)}\right]=\left[\sum\limits_{\begin{subarray}{c}i=1\\l_i>1\end{subarray}}^rY_i,\sum\limits_{i=1}^rv^{(i)}\right],\ \left[\sum\limits_{\begin{subarray}{c}i=1\\l_i>1\end{subarray}}^r\tilde{Y}_i,\sum\limits_{1\leq n<m\leq r}X_{mn}\right]=\left[\sum\limits_{\begin{subarray}{c}i=1\\l_i>1\end{subarray}}^rY_i,\sum\limits_{1\leq n<m\leq r}\tilde{X}_{mn}\right],\\
\\
\displaystyle\left[\sum\limits_{\begin{subarray}{c}i=1\\l_i>1\end{subarray}}^r\tilde{Y}_i,\sum\limits_{1\leq n<m\leq r}Y_{mn}\right]=\left[\sum\limits_{\begin{subarray}{c}i=1\\l_i>1\end{subarray}}^rY_i,\sum\limits_{1\leq n<m\leq r}\tilde{Y}_{mn}\right],\\
\\
\left[\sum\limits_{\begin{subarray}{c}i=1\\l_i>1\end{subarray}}^r\tilde{Y}_i,\sum\limits_{1\leq n<m\leq r}\tilde{X}_{mn}\right]=\left[\sum\limits_{\begin{subarray}{c}i=1\\l_i>1\end{subarray}}^rY_i,\sum\limits_{1\leq n<m\leq r}X_{mn}\right],\\
\\
\left[\sum\limits_{\begin{subarray}{c}i=1\\l_i>1\end{subarray}}^r\tilde{Y}_i,\sum\limits_{1\leq n<m\leq r}\tilde{Y}_{mn}\right]=\left[\sum\limits_{\begin{subarray}{c}i=1\\l_i>1\end{subarray}}^rY_i,\sum\limits_{1\leq n<m\leq r}Y_{mn}\right],
\end{array}$
\end{center}
then
\begin{center}
$\begin{array}{ccl}
\displaystyle [Z+X,AX] & = & \displaystyle\left(\gamma-\mu+\frac{b\mu}{\lambda}\right)\left[\sum\limits_{i=1}^rv^{(i)},\sum\limits_{\begin{subarray}{c}i=1\\l_i>1\end{subarray}}^rY_i\right]+\left(\lambda-\gamma-\frac{b^2}{\lambda}\right)\left[\sum\limits_{\begin{subarray}{c}i=1\\l_i>1\end{subarray}}^rY_i,\sum\limits_{1\leq n<m\leq r}X_{mn}\right]\\
\\
 & & \displaystyle+\left(\lambda-\gamma-\frac{b^2}{\lambda}\right)\left[\sum\limits_{\begin{subarray}{c}i=1\\l_i>1\end{subarray}}^rY_i,\sum\limits_{1\leq n<m\leq r}Y_{mn}\right]\\
 \\
 & = & 0\ \ \ \  \text{ (by \eqref{B7}). } 
\end{array}$
\end{center}

By Proposition \ref{3.3}, $A$ is a g.o. metric. 

$b)$ Let us suppose that $A$ is a g.o. metric. Interchanging $V_i$ by $(V_i)_2,$ the arguments of item $a)$ work to show that
\begin{center}
$\begin{array}{l}
\mu^{(1)}=...=\mu^{(r-1)}=:\mu,\\
\\
\lambda^{(mn)}_1=\lambda^{(mn)}_2=:\lambda, \text{ for all } (m,n),\\
\\
b_{mn}=\mu-\lambda=:b, \text{ for all } (m,n),\\
\\
\gamma^{(i)}=\frac{\lambda^2-b^2}{\lambda}=:\gamma \text{ for all } i\in\{1,...,r-1\} \text{ with } l_i>1.\\
\end{array}$
\end{center}
For $i<j$, we have that $(V_i)_1$ and $(V_j)_1$ are $K_\Theta-$invariant, $(\cdot,\cdot)-$orthogonal and are contained in the $A-$eigenspaces corresponding to $\rho^{(i)}$ and $\rho^{(j)}$ respectively. Also

\begin{center}
$2(w_{\tilde{l}_{j-1}+1,\tilde{l}_{i-1}+1}-u_{\tilde{l}_{j-1}+1,\tilde{l}_{i-1}+1})=\left[w_{\tilde{l}_{r-1}+1,\tilde{l}_{i-1}+1}-u_{\tilde{l}_{r-1}+1,\tilde{l}_{i-1}+1},w_{\tilde{l}_{r-1}+1,\tilde{l}_{j-1}+1}-u_{\tilde{l}_{r-1}+1,\tilde{l}_{j-1}+1}\right]$
\end{center}
is an non-zero element in $[(V_i)_1,(V_j)_1]\cap((V_i)_1\oplus(V_j)_1)^{\perp},$ then, by Proposition \ref{3.4}, $\rho^{(i)}=\rho^{(j)}.$ Therefore, $\rho^{(1)}=...=\rho^{(r-1)}=:\rho.$ For $X=w_{\tilde{l}_{r-1}+1,1}-u_{\tilde{l}_{r-1}+1,1}+w_{\tilde{l}_{r-2}+1,1},$ there exists a $Z\in\mathfrak{k}_\Theta$ such that $[Z+X,AX]=0,$ but

\begin{center}
$AX=\rho(w_{\tilde{l}_{r-1}+1,1}-u_{\tilde{l}_{r-1}+1,1})+\lambda w_{\tilde{l}_{r-2}+1,1}+bu_{\tilde{l}_{r-2}+1,1}$
\end{center}
and
\begin{center}
$\begin{array}{ccl}
[X,AX] & = & (\lambda-\rho-b)(w_{\tilde{l}_{r-1}+1,\tilde{l}_{r-2+}1}-u_{\tilde{l}_{r-1}+1,\tilde{l}_{r-2}+1})\in M_{r,r-1}. 
\end{array}$
\end{center}
Since $AX\in(V_1)_1\oplus M_{r-1,1}$ which is $\mathfrak{k}_\Theta-$invariant, we have $[Z,AX]\in(V_1)_1\oplus M_{r-1,1},$ so, by linear independence $\lambda-\rho-b=0$, i.e., $b=\lambda-\rho.$ Thus, $A$ satisfies \eqref{B8}.

Conversely, if $A$ satisfies \eqref{B8} and we write 
\begin{center}
$X=\displaystyle\sum\limits_{i=1}^{r-1} v^{(i)}+\sum\limits_{\begin{subarray}{c}i=1\\ l_i>1\end{subarray}}^{r-1}Y_i+\sum\limits_{1\leq n<m\leq r-1}X_{mn}+\sum\limits_{1\leq n<m\leq r-1}Y_{mn}+\sum\limits_{n=1}^{r-1}(X_{rn}+\tilde{X}_{rn})+\sum\limits_{n=1}^{r-1}(X'_{rn}-\tilde{X'}_{rn}),$
\end{center}
where $v^{(i)}\in \vspan\{v_{\tilde{l}_{i-1}+s}:1\leq s\leq l_i\},$ $Y_i\in U_i,$ $X_{mn}\in W_{mn},$ $Y_{mn}\in U_{mn},$ for $1\leq n<m\leq r-1$, $X_{rn},X'_{rn}\in \vspan\{w_{\tilde{l}_{r-1}+s,\tilde{l}_{n-1}+t}:1\leq t\leq l_n,\ 1\leq s\leq l_{r}\}$ and $\tilde{X}_{rn},\tilde{X'}_{rn}$ are as in the proof of item $a),$ then 
\begin{center}
$\begin{array}{ccl}
AX & = & \displaystyle\mu\sum\limits_{i=1}^{r-1} v^{(i)}+\gamma\sum\limits_{\begin{subarray}{c}i=1\\ l_i>1\end{subarray}}^{r-1}Y_i+\lambda\sum\limits_{1\leq n<m\leq r-1}X_{mn}+b\sum\limits_{1\leq n<m\leq r-1}\tilde{X}_{mn}+b\sum\limits_{1\leq n<m\leq r-1}\tilde{Y}_{mn}\\
\\
 & & +\displaystyle\lambda\sum\limits_{1\leq n<m\leq r-1}Y_{mn}+\mu\sum\limits_{n=1}^{r-1}(X_{rn}+\tilde{X}_{rn})+\rho\sum\limits_{n=1}^{r-1}(X'_{rn}-\tilde{X'}_{rn}).
\end{array}$
\end{center}
Similar to item $a),$ we have that
\begin{center}
$\begin{array}{ccl}
[X,AX] & = &\displaystyle(\gamma-\mu)\left[\sum\limits_{i=1}^{r-1} v^{(i)},\sum\limits_{\begin{subarray}{c}i=1\\ l_i>1\end{subarray}}^{r-1}Y_i\right]+(\lambda-\gamma)\left[\sum\limits_{\begin{subarray}{c}i=1\\ l_i>1\end{subarray}}^{r-1}Y_i,\sum\limits_{1\leq n<m\leq r-1}X_{mn}\right]\\
\\
& & +\displaystyle b\left[\sum\limits_{\begin{subarray}{c}i=1\\ l_i>1\end{subarray}}^{r-1}Y_i,\sum\limits_{1\leq n<m\leq r-1}\tilde{X}_{mn}\right]+b\left[\sum\limits_{\begin{subarray}{c}i=1\\ l_i>1\end{subarray}}^{r-1}Y_i,\sum\limits_{1\leq n<m\leq r-1}\tilde{Y}_{mn}\right]\ \ \ \ \ \ \ \ \ \ \ \ \ \ \ \ \ \\
\end{array}$
\end{center}
\begin{center}
$\begin{array}{ccl}\ \ \ \ \ \ \ \ \ \ \ \ \ \ 
& &\displaystyle+(\lambda-\gamma)\left[\sum\limits_{\begin{subarray}{c}i=1\\ l_i>1\end{subarray}}^{r-1}Y_i,\sum\limits_{1\leq n<m\leq r-1}Y_{mn}\right]+(\mu-\gamma)\left[\sum\limits_{\begin{subarray}{c}i=1\\ l_i>1\end{subarray}}^{r-1}Y_i,\sum\limits_{n=1}^{r-1}(X_{rn}+\tilde{X}_{rn})\right]\\
\\
& & \displaystyle+(\rho-\gamma)\left[\sum\limits_{\begin{subarray}{c}i=1\\ l_i>1\end{subarray}}^{r-1}Y_i,\sum\limits_{n=1}^{r-1}(X'_{rn}-\tilde{X'}_{rn})\right]. 
\end{array}$
\end{center}
By taking $Z=\displaystyle-\frac{b}{\lambda}\sum\limits_{\begin{subarray}{c}i=1\\ l_i>1\end{subarray}}^{r-1}\tilde{Y}_i\in\mathfrak{k}_\Theta,$ we have

\begin{center}
$\begin{array}{ccl}
\displaystyle [Z+X,AX] & = & \displaystyle\left(\gamma-\mu+\frac{b\mu}{\lambda}\right)\left[\sum\limits_{i=1}^{r-1}v^{(i)},\sum\limits_{\begin{subarray}{c}i=1\\l_i>1\end{subarray}}^{r-1}Y_i\right]+\left(\lambda-\gamma-\frac{b^2}{\lambda}\right)\left[\sum\limits_{\begin{subarray}{c}i=1\\l_i>1\end{subarray}}^{r-1}Y_i,\sum\limits_{1\leq n<m\leq r-1}X_{mn}\right]\\
\\
 & & \displaystyle+\left(\lambda-\gamma-\frac{b^2}{\lambda}\right)\left[\sum\limits_{\begin{subarray}{c}i=1\\l_i>1\end{subarray}}^{r-1}Y_i,\sum\limits_{1\leq n<m\leq r-1}Y_{mn}\right]\\
\end{array}$
\end{center}

\ \ \ \ \ \ \ \ \ \ \ \ \ \ \ \ \ \ \ \ \ \ \ \ \ \ \ \ \ $\displaystyle+\left(\mu-\gamma-\frac{b\mu}{\lambda}\right)\left[\sum\limits_{\begin{subarray}{c}i=1\\l_i>1\end{subarray}}^{r-1}Y_i,\sum\limits_{n=1}^{r-1}(X_{rn}+\tilde{X}_{rn})\right]$

\

\ \ \ \ \ \ \ \ \ \ \ \ \ \ \ \ \ \ \ \ \ \ \ \ \ \ \ \ \ $\displaystyle+\left(\rho-\gamma+\frac{b\rho}{\lambda}\right)\left[\sum\limits_{\begin{subarray}{c}i=1\\l_i>1\end{subarray}}^{r-1}Y_i,\sum\limits_{n=1}^{r-1}(X'_{rn}-\tilde{X'}_{rn})\right]$

\

\ \ \ \ \ \ \ \ \ \ \ \ \ \ \ \ \ \ \ \ \ \ \ \ \ = \ $0 \ \ \ \ \ (\text{by \eqref{B8}}).$

Thus, $A$ is a g.o. metric.
\end{proof}
\subsection{Flags of $C_l,$ $l\geq 3$} We fix $(\cdot,\cdot)$ as in equation \eqref{11} and the $(\cdot,\cdot)-$orthogonal basis 

\begin{center}
$\begin{array}{ll}
u_{kk}=E_{l+k,k}-E_{k,l+k}, & 1\leq k \leq l, \\
w_{ij}=E_{ij}-E_{ji}+E_{l+i,l+j}-E_{l+j,l+i}, & \\
u_{ij}=E_{l+i,j}+E_{l+j,i}-E_{i,l+j}-E_{j,l+i}, & 1\leq j<i\leq l.\\
\end{array}$
\end{center}

\begin{prop}\label{3.8}
Let $\mathbb{F}_\Theta$ be a flag of $C_l.$ Then, $(\mathbb{F}_\Theta,A)$ is a g.o. space if and only if $A$ is written in the basis \eqref{14} as:

\begin{center}
$[A|_{M_0}]_{\mathcal{B}_0}=\left(\begin{array}{ccccc}
\mu^{(0)}_1 & a_{21} & a_{31} & \dots & a_{\tilde{r}1}\\
a_{21} & \mu^{(0)}_2 & a_{32} & \dots & a_{\tilde{r}2} \\
a_{31} & a_{32} & \mu^{(0)}_3 & \dots & a_{\tilde{r}3} \\
\vdots & \vdots & \vdots & \ddots & \vdots \\
a_{\tilde{r}1} & a_{\tilde{r}2} & a_{\tilde{r}3} & \dots & \mu^{(0)}_{\tilde{r}} \\
\end{array}\right),$ \ \ \ $A|_{M_{mn}}=\mu\text{I}_{M_{mn}}$, \ \ \ $A|_{M_i}=\mu\text{I}_{M_i}$ ($l_i>1$)
\end{center}
where $\tilde{r}=r$ if $\alpha_l\notin\Theta$ and $\tilde{r}=r-1$ if $\alpha_l\in\Theta$, $1\leq n<m\leq r$, $1\leq i\leq \tilde{r}$, and 
\begin{equation}\label{28}
\begin{array}{l}\mu^{(0)}_{n'}=\frac{l_{n'}}{l_{m'}}\mu_{m'}^{(0)}+\left(1-\frac{l_{n'}}{l_{m'}}\right)\mu \\
\\
a_{m'n'}=\sqrt{\frac{l_{m'}}{l_{n'}}}(\mu_{n'}^{(0)}-\mu)=\sqrt{\frac{l_{n'}}{l_{m'}}}(\mu^{(0)}_{m'}-\mu),\end{array}\end{equation}
for all $1\leq n'<m'\leq \tilde{r}.$
\end{prop}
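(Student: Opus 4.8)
The plan is to apply Propositions \ref{3.3} and \ref{3.4} in the model $\mathfrak{k}\cong\mathfrak{u}(l)$, where $w_{ij}\leftrightarrow E_{ij}-E_{ji}$, $u_{ij}\leftrightarrow\sqrt{-1}(E_{ij}+E_{ji})$, $u_{kk}\leftrightarrow\sqrt{-1}E_{kk}$, and $M_0$ is the abelian subalgebra of block‑scalar imaginary‑diagonal matrices. The structural facts I use repeatedly are $[\mathfrak{k}_\Theta,M_0]=0$ and the bracket relations $[V_n,W_{mn}]\subseteq U_{mn}$, $[V_m,W_{mn}]\subseteq U_{mn}$, $[W_{mn},U_{mn}]\subseteq M_0\oplus U_m\oplus U_n$, $[M_{mn},M_{nk}]\subseteq M_{mk}$, $[M_{mn},M_{mk}]\subseteq M_{nk}$, each verified by a short computation with elementary matrices.

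\textbf{Necessity.} Assume $(\mathbb{F}_\Theta,A)$ is g.o. First I would prove $b_{mn}=0$ for every $(m,n)$ by applying Proposition \ref{3.3} to $X=e_n+w_{pq}$, where $e_n$ is the basis vector of $V_n$ in $\mathcal{B}_0$ and $w_{pq}$ a basis vector of $W_{mn}$. Since $Aw_{pq}=\mu^{(mn)}_1 w_{pq}+b_{mn}u_{pq}$, the term $b_{mn}[w_{pq},u_{pq}]$ produces a component of $[X,AX]$ inside $M_0$ equal to a nonzero multiple of $b_{mn}$, whereas $[Z,AX]$ has no $M_0$‑component for any $Z\in\mathfrak{k}_\Theta$ (as $[\mathfrak{k}_\Theta,M_0]=0$ and the remaining brackets stay in the off‑diagonal/internal summands); hence $b_{mn}=0$. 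With $b_{mn}=0$, $W_{mn}$ and $U_{mn}$ are $K_\Theta$‑invariant, $(\cdot,\cdot)$‑orthogonal, contained in the $A$‑eigenspaces of $\mu^{(mn)}_1$, $\mu^{(mn)}_2$, and $[W_{mn},U_{mn}]$ meets $(W_{mn}\oplus U_{mn})^\perp$ nontrivially, so Proposition \ref{3.4}$(a)$ gives $\mu^{(mn)}_1=\mu^{(mn)}_2=:\mu_{mn}$. Feeding $[M_{mn},M_{nk}]\subseteq M_{mk}$ and $[M_{mn},M_{mk}]\subseteq M_{nk}$ into Proposition \ref{3.4}$(b)$ forces all the $\mu_{mn}$ equal; an auxiliary application of Proposition \ref{3.3} to a vector mixing $U_i$ (resp. $M_{rn}$) with a \emph{transversal} off‑diagonal basis vector — chosen so that $[X,AX]$ lands in a piece of $U_{mn}$ that no $[Z,AX]$ can reach — shows $\gamma^{(i)}$ (resp. $\mu^{(rn)}$) equals this common value $\mu$. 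Finally, applying Proposition \ref{3.3} again to $X=e_n+w_{pq}$ with $b_{mn}=0$ already known, the $U_{mn}$‑component of $[X,AX]$ is a nonzero scalar times $\big(\tfrac{\mu^{(0)}_n-\mu}{\sqrt{l_n}}-\tfrac{a_{mn}}{\sqrt{l_m}}\big)$, and since $[Z,AX]$ has no $U_{mn}$‑component this must vanish; repeating with $X=e_m+w_{pq}$ gives $\tfrac{\mu^{(0)}_m-\mu}{\sqrt{l_m}}=\tfrac{a_{mn}}{\sqrt{l_n}}$. Together these two identities are exactly \eqref{28}.

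\textbf{Sufficiency.} Condition \eqref{28} says precisely that $(A-\mu\,\mathrm{Id})|_{M_0}$ has rank $\le1$ with image the line $\mathbb{R}\xi$, where $\xi$ corresponds to $\sqrt{-1}I$ if $\alpha_l\notin\Theta$ and to $\sqrt{-1}(I-I_{l_r})$ if $\alpha_l\in\Theta$ (indeed $\sum_{i\le\tilde r}v_i$ corresponds to these matrices). Hence for arbitrary $X\in\mathfrak{m}_\Theta$ with $M_0$‑component $x_0$ one has $AX=w+\mu X$ with $w:=(A-\mu)x_0=c\,\xi$, $c=c(x_0)\in\mathbb{R}$, so $[X,AX]=[X,w]=c[X,\xi]$. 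If $\alpha_l\notin\Theta$, then $\xi=\sqrt{-1}I$ is central in $\mathfrak{u}(l)$, so $[X,AX]=0$ and $Z=0$ works in Proposition \ref{3.3}. If $\alpha_l\in\Theta$, then $\sqrt{-1}I_{l_r}\in\mathfrak{u}(l_r)\subseteq\mathfrak{k}_\Theta$, and for $Z:=-\tfrac{c}{\mu}\sqrt{-1}I_{l_r}\in\mathfrak{k}_\Theta$, using centrality of $\sqrt{-1}I$, $[Z,AX]=-c[\sqrt{-1}I_{l_r},X]=c[\xi,X]=-[X,AX]$, i.e. $[Z+X,AX]=0$. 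Thus $A$ is a g.o. metric.

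\textbf{Main obstacle.} The difficulty is organizational rather than conceptual: one must carry out the bracket computations in the $\mathfrak{u}(l)$‑model precisely enough to pinpoint the $M_0$‑ and $U_{mn}$‑components above, and one must treat separately the degenerate configurations — small $r$ (where the chaining in Proposition \ref{3.4}$(b)$ is unavailable and must be replaced by direct choices of $X$ in Proposition \ref{3.3}), the case $\alpha_l\in\Theta$ with $\tilde r=1$ (where \eqref{28} is vacuous and every $A|_{M_0}$ occurs), and the flags of $C_4$, where the extra equivalences of Proposition \ref{2.11} introduce additional off‑diagonal parameters ($b_M$, $b^N_1$, $b^N_2$, $b^{(i)}$) that are eliminated by exactly the same $M_0$‑component argument used to kill $b_{mn}$.
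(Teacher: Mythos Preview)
Your proposal is correct. The necessity argument is essentially the paper's: the paper shows $b_{mn}=0$ with the slightly simpler choice $X=w_{\tilde l_{m-1}+s,\tilde l_{n-1}+t}$ alone (then $[X,AX]=2b_{mn}(u_{pp}-u_{qq})\notin M_{mn}$ while $[Z,AX]\in M_{mn}$), uses Proposition~\ref{3.4}(a) for $\mu^{(mn)}_1=\mu^{(mn)}_2$ and for $\mu^{(i)}=\mu$, $\mu^{(rn)}=\mu$, and then tests $X=e_n+w_{pq}$, $X=e_m+w_{pq}$ exactly as you do to obtain \eqref{28}. The sufficiency argument, however, is genuinely different. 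The paper expands $[X,AX]$ in full over the decomposition of $\mathfrak m_\Theta$, cancels term by term using \eqref{28}, and in the case $\alpha_l\in\Theta$ produces the required $Z$ by solving an explicit scalar equation. Your observation that \eqref{28} is equivalent to $(A-\mu\,\mathrm{Id})|_{M_0}$ having rank at most one with image $\mathbb R\xi$, together with the centrality of $\sqrt{-1}I$ in $\mathfrak u(l)$, replaces all of that by a two--line argument and makes the choice $Z=-\tfrac{c}{\mu}\sqrt{-1}I_{l_r}$ transparent. This is cleaner and explains structurally why the family in \eqref{28} is the right one; the paper's computation, on the other hand, is self--contained in the matrix model and does not rely on recognizing the rank--one condition. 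Your remarks about the degenerate cases (small $r$, $\tilde r=1$, and the extra parameters for $C_4$ from Proposition~\ref{2.11}) are apt; the paper's proof invokes Proposition~\ref{2.10} and so implicitly treats $l\neq4$, leaving the $C_4$ exceptions to be checked separately along the lines you indicate.
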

\begin{proof}
Let us suppose $A$ is a g.o. metric. We take $\Theta$ as in equation \eqref{12} and we write $A$ as in Proposition \ref{2.10}. First, we will show that $b_{mn}=0$. In fact, given $s\in\{1,...,l_m\}$ and $t\in\{1,...,l_n\},$ there exists $Z\in \mathfrak{k}_\Theta$ such that 
\begin{equation}\label{29}
\left[Z+w_{\tilde{l}_{m-1}+s,\tilde{l}_{n-1}+t},Aw_{\tilde{l}_{m-1}+s,\tilde{l}_{n-1}+t}\right]=0.
\end{equation}

But,

\begin{center}
$\begin{array}{ccl}
\left[ w_{\tilde{l}_{m-1}+s,\tilde{l}_{n-1}+t},Aw_{\tilde{l}_{m-1}+s,\tilde{l}_{n-1}+t}\right] & = & \left[ w_{\tilde{l}_{m-1}+s,\tilde{l}_{n-1}+t},\mu_1^{(mn)}w_{\tilde{l}_{m-1}+s,\tilde{l}_{n-1}+t}+b_{mn}u_{\tilde{l}_{m-1}+s,\tilde{l}_{n-1}+t}\right]\\
\\
 & = & b_{mn}\left[w_{\tilde{l}_{m-1}+s,\tilde{l}_{n-1}+t},u_{\tilde{l}_{m-1}+s,\tilde{l}_{n-1}+t}\right]\\
 \\
 & = & 2b_{mn}(u_{\tilde{l}_{m-1}+s,\tilde{l}_{m-1}+s}-u_{\tilde{l}_{n-1}+t,\tilde{l}_{n-1}+t})
\end{array}$
\end{center}

and $[Z,Aw_{\tilde{l}_{m-1}+s,\tilde{l}_{n-1}+t}]\in M_{mn}$ (this is because $M_{mn}$ is $K_\Theta-$invariant, so it is $\mathfrak{k}_\Theta-$invariant). Since $M_{mn}$ is $(\cdot,\cdot)-$orthogonal to $2b_{mn}(u_{\tilde{l}_{m-1}+s,\tilde{l}_{m-1}+s}-u_{\tilde{l}_{n-1}+t,\tilde{l}_{n-1}+t})$, then equation \eqref{29} implies $[w_{\tilde{l}_{m-1}+s,\tilde{l}_{n-1}+t},Aw_{\tilde{l}_{m-1}+s,\tilde{l}_{n-1}+t}]=0,$ concluding that $b_{mn}=0.$ Next, we prove that $\mu^{(mn)}_1=\mu^{(mn)}_2$. Since $b_{mn}=0$, we have that $W_{mn}$ and $U_{mn}$ are contained in the eigenspaces of $A$ corresponding to the eigenvalues $\mu_1^{(mn)}$ and $\mu^{(mn)}_2$ respectively. Also, they are $K_\Theta-$invariant and\begin{center}
$\begin{array}{ccl}
u_{\tilde{l}_{m-1}+1,\tilde{l}_{m-1}+1}-u_{\tilde{l}_{n-1}+1,\tilde{l}_{n-1}+1}=\left[w_{\tilde{l}_{m-1}+1,\tilde{l}_{n-1}+1},u_{\tilde{l}_{m-1}+1,\tilde{l}_{n-1}+1}\right]\in\left[W_{mn},U_{mn}\right]\cap (W_{mn}\oplus U_{mn})^\perp
\end{array}.$
\end{center}
By Proposition \ref{3.4} we have $\mu_1^{(mn)}=\mu_2^{(mn)}=:\mu^{(mn)}.$ To prove that $\mu^{(mn)}=\mu^{(m'n')}=:\mu$ for all $(m,n)$ and $(m',n'),$ is analogous to Proposition \ref{3.5}. To show the result for $A|_{M_0},$ we consider the vector $X=\frac{1}{\sqrt{l_n}}(u_{\tilde{l}_{n-1}+1,\tilde{l}_{n-1}+1}+...+u_{\tilde{l}_n,\tilde{l}_n})+w_{\tilde{l}_{m-1}+1,\tilde{l}_{n-1}+1},$ where $1\leq n<m\leq \tilde{r}$, then 

\begin{center}
$\begin{array}{ccl}
[X,AX] & = & \left[\frac{1}{\sqrt{l_n}}\sum\limits_{\begin{subarray}{c}i=1\\ \ \end{subarray}}^{l_n}u_{\tilde{l}_{n-1}+i,\tilde{l}_{n-1}+i}+w_{\tilde{l}_{m-1}+1,\tilde{l}_{n-1}+1},\mu_n^{(0)}\frac{1}{\sqrt{l_n}}\sum\limits_{i=1}^{l_n}u_{\tilde{l}_{n-1}+i,\tilde{l}_{n-1}+i}\right.\\
 \\
 & & \left.+\sum\limits_{\begin{subarray}{c}j=1\\j\neq n\end{subarray}}^{\tilde{r}}a_{jn}\frac{1}{\sqrt{l_j}}\sum\limits_{i=1}^{l_j}u_{\tilde{l}_{j-1}+i,\tilde{l}_{j-1}+i}+\mu\ w_{\tilde{l}_{m-1}+1,\tilde{l}_{n-1}+1}\Huge\right] \text{ (here }a_{jn}=a_{nj} \text{ for }j<n)\\
 \\
 & = & \frac{(\mu^{(0)}_n-\mu)}{\sqrt{l_n}}\left[w_{\tilde{l}_{m-1}+1,\tilde{l}_{n-1}+1},\sum\limits_{i=1}^{l_n}u_{\tilde{l}_{n-1}+i,\tilde{l}_{n-1}+i}\right]+\frac{a_{mn}}{\sqrt{l_m}}\left[w_{\tilde{l}_{m-1}+1,\tilde{l}_{n-1}+1},\sum\limits_{i=1}^{l_m}u_{\tilde{l}_{m-1}+i,\tilde{l}_{m-1}+i}\right]\\
\\
 & = & \frac{(\mu^{(0)}_n-\mu)}{\sqrt{l_n}}u_{\tilde{l}_{m-1}+1,\tilde{l}_{n-1}+1}-\frac{a_{mn}}{\sqrt{l_m}}u_{\tilde{l}_{m-1}+1,\tilde{l}_{n-1}+1}\\
 \\
 & = & \left(\frac{\mu^{(0)}_n-\mu}{\sqrt{l_n}}-\frac{a_{mn}}{\sqrt{l_m}}\right)u_{\tilde{l}_{m-1}+1,\tilde{l}_{n-1}+1}\in U_{mn}.
\end{array}$
\end{center} 

Since $M_0$ and $W_{mn}$ are $K_\Theta-$invariant, they are $\mathfrak{k}_\Theta-$invariant too, so is  $M_0\oplus W_{mn}.$ Also, we have $AX=\mu_n^{(0)}\frac{1}{\sqrt{l_n}}\sum\limits_{i=1}^{l_n}u_{\tilde{l}_{n-1}+i,\tilde{l}_{n-1}+i}+\sum\limits_{\begin{subarray}{c}j=1\\j\neq n\end{subarray}}^ra_{jn}\frac{1}{\sqrt{l_j}}\sum\limits_{i=1}^{l_j}u_{\tilde{l}_{j-1}+i,\tilde{l}_{j-1}+i}+\mu\ w_{\tilde{l}_{m-1}+1,\tilde{l}_{n-1}+1}\in M_0\oplus W_{mn},$ therefore
\begin{center}
$[Z,AX]\in M_0\oplus W_{mn}.$
\end{center}
By linear independence $[Z,AX]+[X,AX]=0$ implies $[X,AX]=0,$ thus, $\frac{\mu^{(0)}_n-\mu}{\sqrt{l_n}}-\frac{a_{mn}}{\sqrt{l_m}}=0.$ By taking $X=\frac{1}{\sqrt{l_m}}\sum\limits_{i=1}^{l_m}u_{\tilde{l}_{m-1}+i,\tilde{l}_{m-1}+i}+w_{\tilde{l}_{m-1}+1,\tilde{l}_{n-1}+1},$ where $1\leq n<m\leq \tilde{r},$ we have 
\begin{center}
$\begin{array}{ccl}
[X,AX] & = & \left(\frac{\mu-\mu_m^{(0)}}{\sqrt{l_m}}+\frac{a_{mn}}{\sqrt{l_n}}\right)u_{\tilde{l}_{m-1}+1,\tilde{l}_{n-1}+1}\in U_{mn}.
\end{array}$
\end{center}

As before, we conclude $\frac{\mu-\mu_m^{(0)}}{\sqrt{l_m}}+\frac{a_{mn}}{\sqrt{l_n}}=0.$ Summarizing, 
\begin{center}
$\left\{\begin{array}{l}
\frac{\mu^{(0)}_n-\mu}{\sqrt{l_n}}-\frac{a_{mn}}{\sqrt{l_m}}=0\\
\\
\frac{\mu^{(0)}_m-\mu}{\sqrt{l_m}}-\frac{a_{mn}}{\sqrt{l_n}}=0\\
\end{array}\right.,$ $1\leq n<m\leq \tilde{r},$ 
\end{center}

thus, $\mu^{(0)}_n=\frac{l_n}{l_m}\mu_m^{(0)}+\left(1-\frac{l_n}{l_m}\right)\mu$ \ and $a_{mn}=\sqrt{\frac{l_m}{l_n}}(\mu_n^{(0)}-\mu)=\sqrt{\frac{l_n}{l_m}}(\mu^{(0)}_m-\mu)$ for all $1\leq n<m\leq \tilde{r}.$ Now, we take $i$ such that $l_i>1$. If $1\leq i \leq r-1$, we have that $M_i$ and $W_{i+1,i}$ are contained in the eigenspaces corresponding to $\mu^{(i)}$ and $\mu$, respectively. Since

\begin{center}
$-u_{\tilde{l}_{i}+1,\tilde{l}_{i-1}+2}=\left[u_{\tilde{l}_{i-1}+2,\tilde{l}_{i-1}+1},w_{\tilde{l}_{i}+1,\tilde{l}_{i-1}+1}\right]\in\left[M_i,W_{i+1,i}\right]\cap(M_i\oplus W_{i+1,i})^\perp,$
\end{center}

then, by Proposition \ref{3.4}, we obtain 
$\mu^{(i)}=\mu.$ If $i=r$, then 

\begin{center}
$u_{\tilde{l}_{r-1}+2,\tilde{l}_{r-2}+1}=\left[u_{\tilde{l}_{r-1}+2,\tilde{l}_{r-1}+1},w_{\tilde{l}_{r-1}+1,\tilde{l}_{r-2}+1}\right]\in\left[M_r,W_{r,r-1}\right]\cap(M_r\oplus W_{r,r-1})^\perp,$
\end{center}
so $\mu^{(r)}=\mu$. Conversely, let us suppose $A$ has the form of the statement. Given $X\in\mathfrak{m}_\Theta$, we can write 

\begin{center}
$X=\displaystyle\sum\limits_{j=1}^{\tilde{r}}\frac{x_j}{\sqrt{l_j}}\sum\limits_{s=1}^{l_j}u_{\tilde{l}_{j-1}+s,\tilde{l}_{j-1}+s}+\sum\limits_{\begin{subarray}{c}i=1\\ l_i>1\end{subarray}}^{\tilde{r}}X_i+\sum\limits_{1\leq n<m\leq r}X_{mn}+\sum\limits_{1\leq n<m\leq r}Y_{mn},$
\end{center}
where $x_j\in\mathbb{R}$, $X_i\in M_i,$ $X_{mn}\in W_{mn}$ and $Y_{mn}\in U_{mn}.$ So
\begin{center}
$\begin{array}{ccl}
AX & = & \displaystyle\sum\limits_{j=1}^{\tilde{r}}\left(\frac{x_j}{\sqrt{l_j}}\mu_j^{(0)}\sum\limits_{s=1}^{l_j}u_{\tilde{l}_{j-1}+s,\tilde{l}_{j-1}+s}+\sum\limits_{\begin{subarray}{c}t=1\\t\neq j\end{subarray}}^{\tilde{r}}\frac{a_{tj}}{\sqrt{l_t}}\sum\limits_{s=1}^{l_t}u_{\tilde{l}_{t-1}+s,\tilde{l}_{t-1}+s}\right)+\mu\sum\limits_{\begin{subarray}{c}i=1\\ l_i>1\end{subarray}}^{\tilde{r}}X_i\\
\\
 & & \displaystyle+\mu\sum\limits_{1\leq n<m\leq r}X_{mn}+\mu\sum\limits_{1\leq n<m\leq r}Y_{mn}.
\end{array}$
\end{center}

We have two cases:

\textit{Case 1. $\alpha_l\notin\Theta:$} We have
\begin{center}
$\begin{array}{l}
\displaystyle [X,AX]=\displaystyle\ \mu\sum\limits_{1\leq n<m\leq r}\frac{x_n}{\sqrt{l_n}}\left[\sum\limits_{s=1}^{l_n}u_{\tilde{l}_{n-1}+s,\tilde{l}_{n-1}+s},X_{mn}\right]+\mu\sum\limits_{1\leq n<m\leq r}\frac{x_m}{\sqrt{l_m}}\left[\sum\limits_{s=1}^{l_m}u_{\tilde{l}_{m-1}+s,\tilde{l}_{m-1}+s},X_{mn}\right]\\
\\
\displaystyle+\mu\sum\limits_{1\leq n<m\leq r}\frac{x_n}{\sqrt{l_n}}\left[\sum\limits_{s=1}^{l_n}u_{\tilde{l}_{n-1}+s,\tilde{l}_{n-1}+s},Y_{mn}\right]+\mu\sum\limits_{1\leq n<m\leq r}\frac{x_m}{\sqrt{l_m}}\left[\sum\limits_{s=1}^{l_m}u_{\tilde{l}_{m-1}+s,\tilde{l}_{m-1}+s},Y_{mn}\right] \\
 \\
 \displaystyle+\sum\limits_{1\leq n<m\leq r}\left(\frac{x_n}{\sqrt{l_n}}\mu^{(0)}_n\left[X_{mn},\sum\limits_{s=1}^{l_n}u_{\tilde{l}_{n-1}+s,\tilde{l}_{n-1}+s}\right]+\frac{x_m}{\sqrt{l_m}}\mu_m^{(0)}\left[X_{mn},\sum\limits_{s=1}^{l_m}u_{\tilde{l}_{m-1}+s,\tilde{l}_{m-1}+s}\right]\right)\\
\\
\displaystyle+\sum\limits_{1\leq n<m\leq r}\left(\frac{x_n}{\sqrt{l_m}}a_{mn}\left[X_{mn},\sum\limits_{s=1}^{l_m}u_{\tilde{l}_{m-1}+s,\tilde{l}_{m-1}+s}\right]+\frac{x_m}{\sqrt{l_n}}a_{mn}\left[X_{mn},\sum\limits_{s=1}^{l_n}u_{\tilde{l}_{n-1}+s,\tilde{l}_{n-1}+s}\right]\right)\\
\\
\displaystyle+\sum\limits_{1\leq n<m\leq r}\sum\limits_{\begin{subarray}{c}j=1\\ j\neq m,n\end{subarray}}^r\left(\frac{x_j}{\sqrt{l_m}}a_{mj}\left[X_{mn},\sum\limits_{s=1}^{l_m}u_{\tilde{l}_{m-1}+s,\tilde{l}_{m-1}+s}\right]+\frac{x_j}{\sqrt{l_n}}a_{nj}\left[X_{mn},\sum\limits_{s=1}^{l_n}u_{\tilde{l}_{n-1}+s,\tilde{l}_{n-1}+s}\right]\right)\\
\\
 \displaystyle+\sum\limits_{1\leq n<m\leq r}\left(\frac{x_n}{\sqrt{l_n}}\mu^{(0)}_n\left[Y_{mn},\sum\limits_{s=1}^{l_n}u_{\tilde{l}_{n-1}+s,\tilde{l}_{n-1}+s}\right]+\frac{x_m}{\sqrt{l_m}}\mu_m^{(0)}\left[Y_{mn},\sum\limits_{s=1}^{l_m}u_{\tilde{l}_{m-1}+s,\tilde{l}_{m-1}+s}\right]\right)\\
\\
\displaystyle+\sum\limits_{1\leq n<m\leq r}\left(\frac{x_n}{\sqrt{l_m}}a_{mn}\left[Y_{mn},\sum\limits_{s=1}^{l_m}u_{\tilde{l}_{m-1}+s,\tilde{l}_{m-1}+s}\right]+\frac{x_m}{\sqrt{l_n}}a_{mn}\left[Y_{mn},\sum\limits_{s=1}^{l_n}u_{\tilde{l}_{n-1}+s,\tilde{l}_{n-1}+s}\right]\right)\\
\end{array}$
\end{center}
\begin{center}
$\begin{array}{l}
\displaystyle+\sum\limits_{1\leq n<m\leq r}\sum\limits_{\begin{subarray}{c}j=1\\ j\neq m,n\end{subarray}}^r\left(\frac{x_j}{\sqrt{l_m}}a_{mj}\left[Y_{mn},\sum\limits_{s=1}^{l_m}u_{\tilde{l}_{m-1}+s,\tilde{l}_{m-1}+s}\right]+\frac{x_j}{\sqrt{l_n}}a_{nj}\left[Y_{mn},\sum\limits_{s=1}^{l_n}u_{\tilde{l}_{n-1}+s,\tilde{l}_{n-1}+s}\right]\right),\ \ \ \ \ \ \ \ \ \\
\end{array}$
\end{center} 
since $\left[\sum\limits_{s=1}^{l_n}u_{\tilde{l}_{n-1}+s,\tilde{l}_{n-1}+s},Z\right]=-\left[\sum\limits_{s=1}^{l_m}u_{\tilde{l}_{m-1}+s,\tilde{l}_{m-1}+s},Z\right],$ $Z\in\{X_{mn},Y_{mn}\},$ $1\leq n<m\leq r,$ then 
\begin{center}
$\begin{array}{ccl}
\displaystyle [X,AX] & = & \displaystyle\ \ \sum\limits_{1\leq n<m\leq r}x_n\left(\frac{\mu^{(0)}_n-\mu}{\sqrt{l_n}}-\frac{a_{mn}}{\sqrt{l_m}}\right)\left[\sum\limits_{s=1}^{l_m}u_{\tilde{l}_{m-1}+s,\tilde{l}_{m-1}+s},X_{mn}\right]\\
\\
& & \displaystyle+\sum\limits_{1\leq n<m\leq r}x_m\left(\frac{\mu^{(0)}_m-\mu}{\sqrt{l_m}}-\frac{a_{mn}}{\sqrt{l_n}}\right)\left[\sum\limits_{s=1}^{l_n}u_{\tilde{l}_{n-1}+s,\tilde{l}_{n-1}+s},X_{mn}\right]\\
\\
 & & \displaystyle+\sum\limits_{1\leq n<m\leq r}x_n\left(\frac{\mu^{(0)}_n-\mu}{\sqrt{l_n}}-\frac{a_{mn}}{\sqrt{l_m}}\right)\left[\sum\limits_{s=1}^{l_m}u_{\tilde{l}_{m-1}+s,\tilde{l}_{m-1}+s},Y_{mn}\right]\\
\\
& & \displaystyle+\sum\limits_{1\leq n<m\leq r}x_m\left(\frac{\mu^{(0)}_m-\mu}{\sqrt{l_m}}-\frac{a_{mn}}{\sqrt{l_n}}\right)\left[\sum\limits_{s=1}^{l_n}u_{\tilde{l}_{n-1}+s,\tilde{l}_{n-1}+s},Y_{mn}\right]\ \ \ \ \ \ \ \ \ \ \ \ \ \ \ \ \ \\
\end{array}$
\end{center}
\begin{center}
$\begin{array}{ccl}
& & \displaystyle+\sum\limits_{1\leq n<m\leq r}\sum\limits_{\begin{subarray}{c}j=1\\ j\neq m,n\end{subarray}}^rx_j\left(\frac{a_{nj}}{\sqrt{l_n}}-\frac{a_{mj}}{\sqrt{l_m}}\right)\left[X_{mn},\sum\limits_{s=1}^{l_n}u_{\tilde{l}_{n-1}+s,\tilde{l}_{n-1}+s}\right]\\
\\
& & \displaystyle+\sum\limits_{1\leq n<m\leq r}\sum\limits_{\begin{subarray}{c}j=1\\ j\neq m,n\end{subarray}}^rx_j\left(\frac{a_{nj}}{\sqrt{l_n}}-\frac{a_{mj}}{\sqrt{l_m}}\right)\left[Y_{mn},\sum\limits_{s=1}^{l_n}u_{\tilde{l}_{n-1}+s,\tilde{l}_{n-1}+s}\right],\\
\end{array}$
\end{center}
by \eqref{28}, $\frac{a_{nj}}{\sqrt{l_n}}-\frac{a_{mj}}{\sqrt{l_m}}=\frac{\mu^{(0)}_m-\mu}{\sqrt{l_m}}-\frac{a_{mn}}{\sqrt{l_n}}=\frac{\mu^{(0)}_n-\mu}{\sqrt{l_n}}-\frac{a_{mn}}{\sqrt{l_m}}=0,$ thus $[X,AX]=0.$

\text{Case 2. $\alpha_l\in\Theta:$} Similarly as before we obtain

\begin{center}
$\begin{array}{ccl}
[X,AX] & = & \displaystyle\sum\limits_{n=1}^{r-1}\frac{1}{\sqrt{l_n}}\left(x_n(\mu_n^{(0)}-\mu)+\sum\limits_{\begin{subarray}{c}j=1\\ j\neq n\end{subarray}}^{r-1}a_{nj}x_j\right)\left[X_{rn},\sum\limits_{s=1}^{l_n}u_{\tilde{l}_{n-1}+s,\tilde{l}_{n-1}+s}\right]\\
\\
& & \displaystyle\sum\limits_{n=1}^{r-1}\frac{1}{\sqrt{l_n}}\left(x_n(\mu_n^{(0)}-\mu)+\sum\limits_{\begin{subarray}{c}j=1\\ j\neq n\end{subarray}}^{r-1}a_{nj}x_j\right)\left[Y_{rn},\sum\limits_{s=1}^{l_n}u_{\tilde{l}_{n-1}+s,\tilde{l}_{n-1}+s}\right].
\end{array}$
\end{center}

We consider $Z=\frac{x}{\sqrt{l_r}}\sum\limits_{s=1}^{l_r}u_{\tilde{l}_{r-1}+s,\tilde{l}_{r-1}+s}\in\mathfrak{k}_\Theta,$ for some $x\in\mathbb{R}$, then 

\begin{center}
$\begin{array}{ccl}
[Z,AX] & = & \displaystyle\sum\limits_{n=1}^{r-1}\frac{x}{\sqrt{l_r}}\mu\left[\sum\limits_{s=1}^{l_r}u_{\tilde{l}_{r-1}+s,\tilde{l}_{r-1}+s},X_{rn}\right]+\sum\limits_{n=1}^{r-1}\frac{x}{\sqrt{l_r}}\mu\left[\sum\limits_{s=1}^{l_r}u_{\tilde{l}_{r-1}+s,\tilde{l}_{r-1}+s},Y_{rn}\right],
\end{array}$
\end{center}

since $\left[\sum\limits_{s=1}^{l_r}u_{\tilde{l}_{r-1}+s,\tilde{l}_{r-1}+s},W\right]=-\left[\sum\limits_{s=1}^{l_n}u_{\tilde{l}_{n-1}+s,\tilde{l}_{n-1}+s},W\right]$ for $W\in\{X_{rn},Y_{rn}\}$ and  $1\leq n\leq r-1,$ we have 

 \begin{center}
$\begin{array}{ccl}
[Z+X,AX] & = & \displaystyle\sum\limits_{n=1}^{r-1}\left(\frac{1}{\sqrt{l_n}}\left(x_n(\mu_n^{(0)}-\mu)+\sum\limits_{\begin{subarray}{c}j=1\\ j\neq n\end{subarray}}^{r-1}a_{nj}x_j\right)+\frac{x\mu}{\sqrt{l_r}}\right)\left[X_{rn},\sum\limits_{s=1}^{l_n}u_{\tilde{l}_{n-1}+s,\tilde{l}_{n-1}+s}\right]\\
\\
 & & +\displaystyle\sum\limits_{n=1}^{r-1}\left(\frac{1}{\sqrt{l_n}}\left(x_n(\mu_n^{(0)}-\mu)+\sum\limits_{\begin{subarray}{c}j=1\\ j\neq n\end{subarray}}^{r-1}a_{nj}x_j\right)+\frac{x\mu}{\sqrt{l_r}}\right)\left[Y_{rn},\sum\limits_{s=1}^{l_n}u_{\tilde{l}_{n-1}+s,\tilde{l}_{n-1}+s}\right].
\end{array}$
\end{center}

We observe that 

\begin{center}
$\displaystyle\frac{1}{\sqrt{l_n}}\left(x_n(\mu_n^{(0)}-\mu)+\sum\limits_{\begin{subarray}{c}j=1\\ j\neq n\end{subarray}}^{r-1}a_{nj}x_j\right)+\frac{x\mu}{\sqrt{l_r}}=0,$ $n=1,...,r-1$ $\Longrightarrow$ $[Z+X,AX]=0,$
\end{center}

but 

\begin{center}
$\displaystyle\frac{1}{\sqrt{l_n}}\left(x_n(\mu_n^{(0)}-\mu)+\sum\limits_{\begin{subarray}{c}j=1\\ j\neq n\end{subarray}}^{r-1}a_{nj}x_j\right)+\frac{x\mu}{\sqrt{l_r}}=0$ $\Longleftrightarrow$ $x=\frac{\sqrt{l_r}}{\mu}\left(\sum\limits_{j=1}^{r-1}\sqrt{l_j}x_j\right)\left(\frac{\mu_n^{(0)}-\mu}{l_n}\right).$
\end{center}
Thus, it is enough to show that the number $\frac{\sqrt{l_r}}{\mu}\left(\sum\limits_{j=1}^{r-1}\sqrt{l_j}x_j\right)\left(\frac{\mu_n^{(0)}-\mu}{l_n}\right)$ does not depend on $n$. In fact,

\begin{center}
$\frac{\mu_n^{(0)}-\mu}{l_n}=\frac{\mu_n'^{(0)}-\mu}{l_n'}$ $\Longleftrightarrow$ $\frac{\sqrt{l_{n'}}}{\sqrt{l_n}}(\mu^{(0)}_n-\mu)=\frac{\sqrt{l_{n}}}{\sqrt{l_{n'}}}(\mu^{(0)}_{n'}-\mu)$
\end{center}
which is true by \eqref{28}.
\end{proof}
\subsection{Flags of $D_l,$ $l\geq 5.$} We consider the invariant inner product $(\cdot,\cdot)$ in \eqref{D1} and the $(\cdot,\cdot)-$orthonormal basis \eqref{special2}.

\begin{prop}\label{3.9} Let $\mathbb{F}_\Theta$ be a flag of $D_l,$ $l\geq5$ and $A$ an invariant metric as in Proposition \ref{2.14}.

$a)$ If $\alpha_l\notin\Theta,$ then $(\mathbb{F}_\Theta,A)$ is a g.o.space if and only if  

\begin{equation}\label{47}
\left\{\begin{array}{l}
\lambda^{(mn)}_1=\lambda^{(mn)}_2=:\lambda, \text{ for all } (m,n),\\
\\
b_{mn}=:b, \text{ for all } (m,n),\\
\\
\gamma^{(i)}=:\gamma \text{ for all } i\in\{1,...,r\} \text{ with } l_i>1,\\
\\
\gamma=\frac{\lambda^2-b^2}{\lambda}
\end{array}\right.
\end{equation}

$b)$ If $\{\alpha_{l-1},\alpha_l\}\in\Theta$, then $(\mathbb{F}_\Theta,A)$ is a g.o. space if and only if $A$ is normal.

$c)$ If $\alpha_l\in\Theta$ and $\alpha_{l-1}\notin\Theta,$ then $(\mathbb{F}_\Theta,A)$ is a g.o. space if and only if

\begin{equation}\label{48}
\left\{\begin{array}{l}
\lambda^{(mn)}_1=\lambda^{(mn)}_2=\lambda_1^{(r-1,n)}=\lambda_2^{(r-1,n)}=:\lambda, \text{ for all } (m,n),\\
\\
b_{mn}=b^{(1)}_{r-1,n}=b^{(2)}_{r-1,n}=:b, \text{ for all } (m,n),\\
\\
\gamma^{(i)}=:\gamma \text{ for all } i\in\{1,...,r-1\} \text{ with } l_i>1,\\
\\
\gamma=\frac{\lambda^2-b^2}{\lambda}
\end{array}\right.
\end{equation}
\end{prop}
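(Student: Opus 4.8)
The proof goes through the criterion of Proposition~\ref{3.3}: $A$ is g.o. on $\mathbb{F}_\Theta=K/K_\Theta$ exactly when for every $X\in\mathfrak{m}_\Theta$ there is $Z\in\mathfrak{k}_\Theta$ with $[Z+X,AX]=0$. For the three ``only if'' directions the plan is to use the test-vector technique already employed in Propositions~\ref{3.5}--\ref{3.8} (and packaged in Proposition~\ref{3.4}): choose $X$ inside a sum $\mathfrak{p}_1\oplus\mathfrak{p}_2$ of two of the $K_\Theta$-summands of Proposition~\ref{2.12} in such a way that $AX\in\mathfrak{p}_1\oplus\mathfrak{p}_2$ as well; then $\mathfrak{p}_1\oplus\mathfrak{p}_2$ is $\mathfrak{k}_\Theta$-invariant, so $[Z,AX]\in\mathfrak{p}_1\oplus\mathfrak{p}_2$ for every $Z\in\mathfrak{k}_\Theta$, while $[X,AX]$ lands in a third summand $\mathfrak{p}_3$ that is $(\cdot,\cdot)$-orthogonal to $\mathfrak{p}_1\oplus\mathfrak{p}_2$; hence $[X,AX]=-[Z,AX]\in(\mathfrak{p}_1\oplus\mathfrak{p}_2)\cap\mathfrak{p}_3=\{0\}$, which yields one relation among the entries of $A$. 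All brackets are computed in $\mathfrak{so}(l,l)$ from the basis \eqref{special2}, using that $[w_{ab},w_{cd}]$ is of $w$-type, $[w_{ab},u_{cd}]$ of $u$-type, $[u_{ab},u_{cd}]$ of $w$-type, and $[w_{ab},u_{ab}]=0$.

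For part $a)$: first the neighbouring-block test vectors $w_{\tilde l_{m-1}+1,\tilde l_{n-1}+1}+w_{\tilde l_{m-1}+1,\tilde l_n+1}$, the same with $u$ in place of $w$, and their ``vertical'' analogues (varying the upper block) show that $\lambda^{(mn)}_1$, $\lambda^{(mn)}_2$, $b_{mn}$ do not depend on $(m,n)$; call them $\lambda_1,\lambda_2,b$. When $r\ge 3$, the vector $w_{\tilde l_{m-1}+1,\tilde l_{n-1}+1}+u_{\tilde l_{m'-1}+1,\tilde l_{n-1}+1}$ (two blocks $m\ne m'$ sharing the first index of a third block $n$) has $[X,AX]$ landing in $M_{mm'}$ with a $u$-component of coefficient $\lambda_2-\lambda_1$ and an independent $w$-component, forcing $\lambda_1=\lambda_2=:\lambda$. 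For each $i$ with $l_i>1$ the pair $u_{\tilde l_{i-1}+2,\tilde l_{i-1}+1}+w_{\tilde l_i+1,\tilde l_{i-1}+1}$ and $u_{\tilde l_{i-1}+2,\tilde l_{i-1}+1}+u_{\tilde l_i+1,\tilde l_{i-1}+1}$ (for $i=r$, replace $\tilde l_i+1$ by $\tilde l_{r-1}+1$, as in Proposition~\ref{3.7}) gives $\gamma^{(i)}\lambda_1=\lambda_1^2-b^2$ and $\gamma^{(i)}\lambda_2=\lambda_2^2-b^2$; these both re-prove $\lambda_1=\lambda_2$ (since $\lambda_1\lambda_2+b^2>0$, which covers the remaining case $r=2$, where $l\ge5$ guarantees some $l_i>1$) and give $\gamma^{(i)}=\tfrac{\lambda^2-b^2}{\lambda}$, independently of $i$; hence \eqref{47}. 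The converse is the exact analogue of the converse of Proposition~\ref{3.7}: decompose $X$ along the summands of Proposition~\ref{2.12}, take $Z=-\tfrac{b}{\lambda}\sum_{l_i>1}\tilde Y_i\in\mathfrak{k}_\Theta$ (the element obtained from the $U_i$-part of $X$ by interchanging the two $\mathfrak{so}(l)$-blocks), and verify that every term of $[Z+X,AX]$ cancels because $\lambda-\gamma-\tfrac{b^2}{\lambda}=0$.

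Part $c)$ is handled identically, using the summands of Proposition~\ref{2.12} with $S_n=M_n\oplus N_n$ in the role of an $M_{mn}$ (its off-diagonal block $\operatorname{diag}(b^{(1)}_{r-1,n},b^{(2)}_{r-1,n})$ pinned down to $b\,\text{I}$ by testing separately against the $M_n$- and $N_n$-directions) and $V_{r-1}$ in the role of a $U_i$; the extra basis vectors $w_{l,\bullet},u_{l,\bullet}$ of Proposition~\ref{2.13}$b)$ enter the brackets but land in the predicted summands, and the converse again uses $Z=-\tfrac{b}{\lambda}\sum\tilde Y_i$. Part $b)$ is the shortest: once the neighbouring-block argument gives $\lambda^{(mn)}_1=\lambda^{(mn)}_2=\lambda$, $b_{mn}=b$, $\gamma^{(i)}=\tfrac{\lambda^2-b^2}{\lambda}$ on the range $m,n\le r-1$, the vector $w_{\tilde l_{r-1}+1,\tilde l_{n-1}+1}+w_{\tilde l_{m-1}+1,\tilde l_{n-1}+1}$ has $[X,AX]\in M_{rm}$ with a $w$-component of coefficient $\lambda-\lambda^{(rn)}$ and a $u$-component of coefficient $b$; since $M_{rm}$ is irreducible these are linearly independent, so $b=0$ and $\lambda^{(rn)}=\lambda$, whence $\gamma=\lambda$ and $A$ is normal (in the degenerate situations $r=2$ or ``no $l_i>1$'' the relevant module is already irreducible, or the same computation applies verbatim with a $U_i$ in place of $M_{mn}$, exploiting the $(\cdot,\cdot)$-orthogonality of $W_{rm}$ and $U_{rm}$). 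The only real difficulty is combinatorial: keeping track, for every bracket, of which summand of Proposition~\ref{2.12} the result falls into — this is delicate in part $c)$, where the $D_l$-fork mixes the $w$'s and $u$'s carrying the index $l$ — and disposing of the small-rank boundary cases; no conceptual obstacle beyond this is anticipated.
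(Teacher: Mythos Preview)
Your proposal is correct and follows essentially the same approach as the paper's proof: the same test-vector method via Proposition~\ref{3.3} for necessity (with the $\mathfrak{k}_\Theta$-invariance of the summand containing $AX$ forcing $[X,AX]=0$), and the same choice $Z=-\tfrac{b}{\lambda}\sum\tilde Y_i$ for sufficiency. Your handling of $\lambda_1=\lambda_2$ via the mixed vector $w_{\cdot}+u_{\cdot}$ when $r\ge 3$ is in fact exactly what the paper does in its special case $\Theta=\emptyset$ (taking $X=w_{21}+u_{31}$), so you have simply unified that with the general argument; otherwise the two proofs are the same.
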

\begin{proof}
$a)$ Let us suppose that $(\mathbb{F}_\Theta,A)$ is a g.o. space. We take $Z\in\mathfrak{k}_\Theta$ such that $[Z+X,AX],$ where $X=w_{\tilde{l}_{m-1}+1,\tilde{l}_{n-1}+1}+w_{\tilde{l}_{m}+1,\tilde{l}_{n-1}+1}.$ Then we have
\begin{center}
$AX=\lambda^{(mn)}_1w_{\tilde{l}_{m-1}+1,\tilde{l}_{n-1}+1}+b_{mn}u_{\tilde{l}_{m-1}+1,\tilde{l}_{n-1}+1}+\lambda^{(m+1,n)}_1w_{\tilde{l}_{m}+1,\tilde{l}_{n-1}+1}+b_{m+1,n}u_{\tilde{l}_{m}+1,\tilde{l}_{n-1}+1},$
\end{center}
and 
\begin{center}
$\begin{array}{ccl}
[X,AX] & = & \lambda^{(m+1,n)}_1\left[w_{\tilde{l}_{m-1}+1,\tilde{l}_{n-1}+1},w_{\tilde{l}_{m}+1,\tilde{l}_{n-1}+1}\right]+b_{m+1,n}\left[w_{\tilde{l}_{m-1}+1,\tilde{l}_{n-1}+1},u_{\tilde{l}_{m}+1,\tilde{l}_{n-1}+1}\right]\\
\\
 & & +\lambda^{(mn)}_1\left[w_{\tilde{l}_{m}+1,\tilde{l}_{n-1}+1},w_{\tilde{l}_{m-1}+1,\tilde{l}_{n-1}+1}\right]+b_{mn}\left[w_{\tilde{l}_{m}+1,\tilde{l}_{n-1}+1},u_{\tilde{l}_{m-1}+1,\tilde{l}_{n-1}+1}\right]\\
 \\
 & = & (\lambda_1^{(m+1,n)}-\lambda_1^{(mn)})w_{\tilde{l}_{m}+1,\tilde{l}_{m-1}+1}+(b_{m+1,n}-b_{mn})w_{\tilde{l}_{m}+1,\tilde{l}_{m-1}+1}\in M_{m+1,m}.
\end{array}$
\end{center}
Since $AX\in M_{mn}\oplus M_{m+1,n}$, we have $[Z,AX]\in M_{mn}\oplus M_{m+1,n}.$ Thus,
\begin{center}
$[Z+X,AX]=0\Longrightarrow [X,AX]=0=[Z,AX],$
\end{center}
in particular, $\lambda_1^{(m+1,n)}=\lambda_1^{(mn)}$ and $b_{m+1,n}=b_{mn}.$ We can use the same argument for $X=u_{\tilde{l}_{m-1}+1,\tilde{l}_{n-1}+1}+u_{\tilde{l}_{m}+1,\tilde{l}_{n-1}+1}$, $w_{\tilde{l}_{m-1}+1,\tilde{l}_{n-1}+1}+w_{\tilde{l}_{m-1}+1,\tilde{l}_{n}+1}$ and $u_{\tilde{l}_{m-1}+1,\tilde{l}_{n-1}+1}+u_{\tilde{l}_{m-1}+1,\tilde{l}_{n}+1}$ to conclude that
\begin{center}
$\lambda_2^{(m+1,n)}=\lambda_2^{(mn)},$ $\lambda_1^{(m,n+1)}=\lambda_1^{(mn)}$, $\lambda_2^{(m,n+1)}=\lambda_1^{(mn)}$ and $b_{m,n+1}=b_{mn}$.
\end{center}
Then, 
\begin{center}
$\lambda^{(mn)}_1=\lambda^{(m'n')}_1=:\lambda_1$\\
$\lambda^{(mn)}_2=\lambda^{(m'n')}_2=:\lambda_2$\\
$b_{mn}=b_{m'n'}=:b$
\end{center} 
for all $m,n,m',n'.$ Next, we will show that $\gamma^{(i)}=\frac{\lambda_1^2-b^2}{\lambda_1}=\frac{\lambda_2^2-b^2}{\lambda_2}$ for all $i\in\{1,...,r\}$ with $l_i>1.$ In fact, if $i\leq r-1$  we take $X=u_{\tilde{l}_{i-1}+2,\tilde{l}_{i-1}+1}+w_{\tilde{l}_{i}+1,\tilde{l}_{i-1}+1}$ and $Z\in\mathfrak{k}_\Theta$ such that $[Z+X,AX]=0.$ In this case, we can write
\begin{center}
$Z=\displaystyle\sum\limits_{l_j>1}\left(\sum\limits_{1\leq t<s\leq l_j}z^{(j)}_{st}w_{\tilde{l}_{j-1}+s,\tilde{l}_{j-1}+t}\right),$ $z^{(j)}_{st}\in\mathbb{R},$
\end{center}
therefore
\begin{center}
$\begin{array}{ccl}
[Z,AX]& = & \displaystyle\left[Z,\gamma^{(i)}u_{\tilde{l}_{i-1}+2,\tilde{l}_{i-1}+1}+\lambda_1w_{\tilde{l}_{i}+1,\tilde{l}_{i-1}+1}+bu_{\tilde{l}_{i}+1,\tilde{l}_{i-1}+1}\right]\\
\\
 & = & z^{(i)}_{21}b\ u_{\tilde{l}_{i}+1,\tilde{l}_{i-1}+1}+z^{(i)}_{21}\lambda_1w_{\tilde{l}_{i}+1,\tilde{l}_{i-1}+1}+Z',
\end{array}$
\end{center}
where $\{u_{\tilde{l}_{i}+1,\tilde{l}_{i-1}+1},w_{\tilde{l}_{i}+1,\tilde{l}_{i-1}+1},Z'\}$ is linear independent. On the other hand,
\begin{center}
$\begin{array}{ccl}
[X,AX]& = & \displaystyle\left[X,\gamma^{(i)}u_{\tilde{l}_{i-1}+2,\tilde{l}_{i-1}+1}+\lambda_1w_{\tilde{l}_{i}+1,\tilde{l}_{i-1}+1}+bu_{\tilde{l}_{i}+1,\tilde{l}_{i-1}+1}\right]\\
\\
 & = & (\lambda_1-\gamma^{(i)})u_{\tilde{l}_{i}+1,\tilde{l}_{i-1}+1}+b\ w_{\tilde{l}_{i}+1,\tilde{l}_{i-1}+1}.
\end{array}$
\end{center}
Thus, $[Z+X,AX]=0\Longrightarrow Z'=0,\ \lambda_1-\gamma^{(i)}+z_{21}^{(i)}b=b+z_{21}^{(i)}\lambda_1=0,$ so $\gamma^{(i)}=\frac{\lambda_1^2-b^2}{\lambda_1}$. If $i=r$ we take $X=u_{\tilde{l}_{r-1}+2,\tilde{l}_{r-1}+1}+w_{\tilde{l}_{r-1}+1,\tilde{l}_{r-2}+1}$ and proceeding as before we obtain $\gamma^{(r)}=\frac{\lambda_1^2-b^2}{\lambda_1}.$ To show that $\gamma^{(i)}=\frac{\lambda_2^2-b^2}{\lambda_2}$ we can use the same argument but taking $X=u_{\tilde{l}_{i-1}+2,\tilde{l}_{i-1}+1}+u_{\tilde{l}_{i}+1,\tilde{l}_{i-1}+1}$ instead of $u_{\tilde{l}_{i-1}+2,\tilde{l}_{i-1}+1}+w_{\tilde{l}_{i}+1,\tilde{l}_{i-1}+1}$ (when $i\leq r-1$) and $X=u_{\tilde{l}_{r-1}+2,\tilde{l}_{r-1}+1}+u_{\tilde{l}_{r-1}+1,\tilde{l}_{r-2}+1}$ instead of $u_{\tilde{l}_{r-1}+2,\tilde{l}_{r-1}+1}+w_{\tilde{l}_{r-1}+1,\tilde{l}_{r-2}+1}$ (when $i=r$). Summarizing,
\begin{center}
$\gamma^{(i)}=\frac{\lambda_1^2-b^2}{\lambda_1}=\frac{\lambda_2^2-b^2}{\lambda_2}$, for all $i\in\{1,...,r\}$ with $l_i>1.$
\end{center}
We observe that $\frac{\lambda_1^2-b^2}{\lambda_1}=\frac{\lambda_2^2-b^2}{\lambda_2}\Longleftrightarrow \lambda_1\lambda_2(\lambda_1-\lambda_2)=-b^2(\lambda_1-\lambda_2),$ therefore \begin{center}$\lambda_1\neq\lambda_2\Longrightarrow 0<\lambda_1\lambda_2=-b^2\leq 0,$\end{center} which is absurd. Thus, $\lambda_1=\lambda_2.$ We point out that when $\Theta=\emptyset$, the previous argument does not work to show $\lambda_1=\lambda_2=:\lambda$ (because there is no $i$ with $l_i>1$). In that case, we have $[X,AX]=0$ for all $X\in\mathfrak{m}_\Theta$ (because $\mathfrak{k}_\Theta=\{0\}$), in particular, when $X=w_{21}+u_{31}$, $[X,AX]=(\lambda_2-\lambda_1)u_{32},$ so $\lambda_1=\lambda_2.$ Now, we suppose $A$ satisfies \eqref{47}. Let 
\begin{center}
$X=\displaystyle\sum\limits_{\begin{subarray}{c}i=1\\ l_i>1\end{subarray}}^{r}Y_i+\sum\limits_{1\leq n<m\leq r}X_{mn}+\sum\limits_{1\leq n<m\leq r}Y_{mn}$
\end{center}
be a vector in $\mathfrak{m}_\Theta,$ where $Y_i\in U_i,$ $X_{mn}\in W_{mn}$ and $Y_{mn}\in U_{mn}.$ If 
\begin{center}
$X_{mn}=\left(\begin{array}{cc}
C_{mn} &\textbf{0}\\
\textbf{0} & C_{mn}\\
\end{array}\right),$ $Y_{mn}=\left(\begin{array}{cc}
\textbf{0} & D_{mn}\\
D_{mn} & \textbf{0}\\
\end{array}\right)$ and \ $Y_{i}=\left(\begin{array}{cc}
\textbf{0} & D_{i}\\
D_{i} & \textbf{0}\\
\end{array}\right)$
\end{center}
we set
\begin{center}
$\tilde{X}_{mn}=\left(\begin{array}{cc}
\textbf{0} & C_{mn}\\
C_{mn} & \textbf{0}\\
\end{array}\right),$ $\tilde{Y}_{mn}=\left(\begin{array}{cc}
D_{mn} & \textbf{0}\\
\textbf{0} & D_{mn}\\
\end{array}\right)$ and \ $\tilde{Y}_{i}=\left(\begin{array}{cc}
D_{i} & \textbf{0}\\
\textbf{0} & D_{i}\\
\end{array}\right).$
\end{center}
With this notation we have $AX_{mn}=\lambda X_{mn}+b\tilde{X}_{mn},$ $AX_{mn}=b \tilde{Y}_{mn}+\lambda Y_{mn},$ 
\begin{center}
$\begin{array}{l}
\displaystyle\left[\sum\limits_{1\leq n<m\leq r}X_{mn},\sum\limits_{1\leq n<m\leq r}\tilde{X}_{mn}\right]=0=\left[\sum\limits_{1\leq n<m\leq r}Y_{mn},\sum\limits_{1\leq n<m\leq r}\tilde{Y}_{mn}\right] \text{ and }\\
\\
\displaystyle\left[\sum\limits_{1\leq n<m\leq r}X_{mn},\sum\limits_{1\leq n<m\leq r}\tilde{Y}_{mn}\right]=-\left[\sum\limits_{1\leq n<m\leq r}Y_{mn},\sum\limits_{1\leq n<m\leq r}\tilde{X}_{mn}\right].
\end{array}$
\end{center}
Therefore
\begin{center}
$AX=\displaystyle\gamma\sum\limits_{\begin{subarray}{c}i=1\\ l_i>1\end{subarray}}^{r}Y_i+\lambda\sum\limits_{1\leq n<m\leq r}X_{mn}+b\sum\limits_{1\leq n<m\leq r}\tilde{X}_{mn}+b\sum\limits_{1\leq n<m\leq r}\tilde{Y}_{mn}+\lambda\sum\limits_{1\leq n<m\leq r}Y_{mn}$
\end{center}
and
\begin{center}
$\begin{array}{ccl}
\displaystyle [X,AX] & = & \displaystyle\lambda\left[\sum\limits_{\begin{subarray}{c}i=1\\l_i>1\end{subarray}}^rY_i,\sum\limits_{1\leq n<m\leq r}X_{mn}\right]+b\left[\sum\limits_{\begin{subarray}{c}i=1\\l_i>1\end{subarray}}^rY_i,\sum\limits_{1\leq n<m\leq r}\tilde{X}_{mn}\right]+b\left[\sum\limits_{\begin{subarray}{c}i=1\\l_i>1\end{subarray}}^rY_i,\sum\limits_{1\leq n<m\leq r}\tilde{Y}_{mn}\right]\\
 \\
 & & \displaystyle+\lambda\left[\sum\limits_{\begin{subarray}{c}i=1\\l_i>1\end{subarray}}^rY_i,\sum\limits_{1\leq n<m\leq r}Y_{mn}\right]+\gamma\left[\sum\limits_{1\leq n<m\leq r}X_{mn},\sum\limits_{\begin{subarray}{c}i=1\\l_i>1\end{subarray}}^rY_i\right]+\gamma\left[\sum\limits_{1\leq n<m\leq r}Y_{mn},\sum\limits_{\begin{subarray}{c}i=1\\l_i>1\end{subarray}}^rY_i\right]\\
 \\
 & = & \displaystyle(\lambda-\gamma)\left[\sum\limits_{\begin{subarray}{c}i=1\\l_i>1\end{subarray}}^rY_i,\sum\limits_{1\leq n<m\leq r}X_{mn}\right]+(\lambda-\gamma)\left[\sum\limits_{\begin{subarray}{c}i=1\\l_i>1\end{subarray}}^rY_i,\sum\limits_{1\leq n<m\leq r}Y_{mn}\right]\\
 \\
 & & \displaystyle+b\left[\sum\limits_{\begin{subarray}{c}i=1\\l_i>1\end{subarray}}^rY_i,\sum\limits_{1\leq n<m\leq r}\tilde{X}_{mn}\right]+b\left[\sum\limits_{\begin{subarray}{c}i=1\\l_i>1\end{subarray}}^rY_i,\sum\limits_{1\leq n<m\leq r}\tilde{Y}_{mn}\right].\\
\end{array}$
\end{center}
Let $Z=\displaystyle-\frac{b}{\lambda}\sum\limits_{\begin{subarray}{c}i=1\\ l_i>1\end{subarray}}^{r}\tilde{Y}_i\in\mathfrak{k}_\Theta,$ then 
\begin{center}
$\begin{array}{ccl}
\displaystyle [Z,AX] & = & \displaystyle-b\left[\sum\limits_{\begin{subarray}{c}i=1\\l_i>1\end{subarray}}^r\tilde{Y}_i,\sum\limits_{1\leq n<m\leq r}X_{mn}\right]-b\left[\sum\limits_{\begin{subarray}{c}i=1\\l_i>1\end{subarray}}^r\tilde{Y}_i,\sum\limits_{1\leq n<m\leq r}Y_{mn}\right]\\
\\
& & \displaystyle-\frac{b^2}{\lambda}\left[\sum\limits_{\begin{subarray}{c}i=1\\l_i>1\end{subarray}}^r\tilde{Y}_i,\sum\limits_{1\leq n<m\leq r}\tilde{X}_{mn}\right]-\frac{b^2}{\lambda}\left[\sum\limits_{\begin{subarray}{c}i=1\\l_i>1\end{subarray}}^r\tilde{Y}_i,\sum\limits_{1\leq n<m\leq r}\tilde{Y}_{mn}\right].\\
\end{array}$
\end{center}
Since

$\displaystyle\left[\sum\limits_{\begin{subarray}{c}i=1\\l_i>1\end{subarray}}^r\tilde{Y}_i,\sum\limits_{1\leq n<m\leq r}X_{mn}\right]=\left[\sum\limits_{\begin{subarray}{c}i=1\\l_i>1\end{subarray}}^rY_i,\sum\limits_{1\leq n<m\leq r}\tilde{X}_{mn}\right],\ \left[\sum\limits_{\begin{subarray}{c}i=1\\l_i>1\end{subarray}}^r\tilde{Y}_i,\sum\limits_{1\leq n<m\leq r}Y_{mn}\right]=\left[\sum\limits_{\begin{subarray}{c}i=1\\l_i>1\end{subarray}}^rY_i,\sum\limits_{1\leq n<m\leq r}\tilde{Y}_{mn}\right]$

\

$\displaystyle\left[\sum\limits_{\begin{subarray}{c}i=1\\l_i>1\end{subarray}}^r\tilde{Y}_i,\sum\limits_{1\leq n<m\leq r}\tilde{X}_{mn}\right]=\left[\sum\limits_{\begin{subarray}{c}i=1\\l_i>1\end{subarray}}^rY_i,\sum\limits_{1\leq n<m\leq r}X_{mn}\right],\ \left[\sum\limits_{\begin{subarray}{c}i=1\\l_i>1\end{subarray}}^r\tilde{Y}_i,\sum\limits_{1\leq n<m\leq r}\tilde{Y}_{mn}\right]=\left[\sum\limits_{\begin{subarray}{c}i=1\\l_i>1\end{subarray}}^rY_i,\sum\limits_{1\leq n<m\leq r}Y_{mn}\right],$

then 
\begin{center}
$\begin{array}{ccl}
\displaystyle [Z+X,AX] & = & \displaystyle\left(\lambda-\gamma-\frac{b^2}{\lambda}\right)\left[\sum\limits_{\begin{subarray}{c}i=1\\l_i>1\end{subarray}}^rY_i,\sum\limits_{1\leq n<m\leq r}X_{mn}\right]+\left(\lambda-\gamma-\frac{b^2}{\lambda}\right)\left[\sum\limits_{\begin{subarray}{c}i=1\\l_i>1\end{subarray}}^rY_i,\sum\limits_{1\leq n<m\leq r}Y_{mn}\right]\\
 \\
 & = & 0.
\end{array}$
\end{center}
Thus, $A$ is a g.o. metric.

$b)$ Let $A$ be a g.o. metric on $\mathbb{F}_\Theta.$ Analogously to item $a)$ we have
\begin{center}
$\lambda^{(mn)}_1=\lambda^{(m'n')}_1=:\lambda_1$\\
$\lambda^{(mn)}_2=\lambda^{(m'n')}_2=:\lambda_2$\\
$b_{mn}=b_{m'n'}=:b$
\end{center}
for $1\leq n<m\leq r-1$ and $1\leq n'<m'\leq r-1.$  Given $n<n'\leq r-1$, the subspaces $M_{rn}$ and $M_{rn'}$ are $K_\Theta-$invariant, $(\cdot,\cdot)-$orthogonal and are contained in the eigenspaces of $A$ corresponding to the eigenvalues $\lambda^{(rn)}$ and $\lambda^{(rn')}$ respectively. Also,
\begin{center}
$w_{\tilde{l}_{n'-1}+1,\tilde{l}_{n-1}+1}=\left[w_{\tilde{l}_{r-1}+1,\tilde{l}_{n-1}+1},w_{\tilde{l}_{r-1}+1,\tilde{l}_{n'-1}+1}\right]\in \left[M_{rn},M_{rn'}\right]\cap M_{n'n},$
\end{center}
and $M_{n'n}\subseteq(M_{rn}\oplus M_{rn'})^{\perp},$ therefore, by Proposition \ref{3.4}, $\lambda^{(rn)}=\lambda^{(rn')}=:\lambda^{(r)}.$ Next, we shall show that $b=0$ and $\lambda_1=\lambda_2=\lambda^{(r)}.$ In fact, if $X=w_{\tilde{l}_{r-1}+1,1}+w_{\tilde{l}_{r-2}+1,1},$ then
\begin{center}
$\begin{array}{ccl}
[X,AX] & = & \left[w_{\tilde{l}_{r-1}+1,1}+w_{\tilde{l}_{r-2}+1,1},\lambda^{(r)}w_{\tilde{l}_{r-1}+1,1}+\lambda_1w_{\tilde{l}_{r-2}+1,1}+b\ u_{\tilde{l}_{r-2}+1,1}\right]\\
\\
& = & (\lambda^{(r)}-\lambda_1)w_{\tilde{l}_{r-1}+1,\tilde{l}_{r-2}+1}-b\ u_{\tilde{l}_{r-1}+1,\tilde{l}_{r-2}+1}\in M_{r,r-1}.
\end{array}$
\end{center} 
Let $Z\in\mathfrak{k}_\Theta$ such that $[Z+X,AX]=0,$ since $AX\in M_{r1}\oplus M_{r-1,1}$ and $[X,AX]\in M_{r,r-1}$, then $[Z,AX]=0=[X,AX]$, so $\lambda^{(r)}=\lambda_1$ and $b=0.$ By taking $X=w_{\tilde{l}_{r-1}+1,1}+u_{\tilde{l}_{r-2}+1,1}$ instead of $w_{\tilde{l}_{r-1}+1,1}+w_{\tilde{l}_{r-2}+1,1}$ and proceeding analogously, we obtain $\lambda^{(r)}=\lambda_2.$ We define $\lambda:=\lambda_1=\lambda_2=\lambda^{(r)}.$ The same arguments of item $a)$ let us to show that for each $i\in\{1,...,r-1\}$ with $l_i>1$, $\gamma^{(i)}=\frac{\lambda^2-b^2}{\lambda}=\frac{\lambda^2}{\lambda}=\lambda.$ Therefore $A=\lambda\text{I}$, i.e., $A$ is normal.

$c)$ Analogously to item $a)$, we have that
\begin{center}
$\begin{array}{l}
\lambda^{(mn)}_1=:\lambda_1\\
\lambda^{(mn)}_2=:\lambda_2\\
b_{mn}=:b
\end{array}$ \ \ \ $1\leq n<m\leq r-2$
\end{center}
and $\gamma^{(i)}=\frac{\lambda_1^2-b^2}{\lambda_1}=\frac{\lambda_2^2-b^2}{\lambda_2}$ for every $i\in\{1,...,r-2\}$ with $l_i>1,$ in particular, $\lambda_1=\lambda_2=:\lambda.$ By taking $X=w_{\tilde{l}_{r-2}+1,\tilde{l}_{n-1}+1}+w_{\tilde{l}_{r-2}+1,\tilde{l}_{n}+1}$, we have
\begin{center}
$AX=\lambda_1^{(r-1,n)}w_{\tilde{l}_{r-2}+1,\tilde{l}_{n-1}+1}+b^{(1)}_{r-1,n}u_{\tilde{l}_{r-2}+1,\tilde{l}_{n-1}+1}+\lambda^{(r-1,n+1)}_1w_{\tilde{l}_{r-2}+1,\tilde{l}_{n}+1}+b^{(1)}_{r-1,n}u_{\tilde{l}_{r-2}+1,\tilde{l}_{n}+1},$
\end{center}
thus
\begin{center}
$[X,AX]=(\lambda_1^{(r-1,n+1)}-\lambda_1^{(r-1,n)})w_{\tilde{l}_{n}+1,\tilde{l}_{n-1}+1}+(b^{(1)}_{r-1,n+1}-b^{(1)}_{r-1,n})u_{\tilde{l}_{n}+1,\tilde{l}_{n-1}+1}\in M_{n+1,n}.$
\end{center}
There exists $Z\in\mathfrak{k}_\Theta$ such that $[Z+X,AX]=0$, but $AX\in S_{n}\oplus S_{n+1}\Longrightarrow [Z,AX]\in S_{n}\oplus S_{n+1},$ then, $[X,AX]=0$ $\Longrightarrow$ $\lambda_1^{(r-1,n+1)}=\lambda_1^{(r-1,n)}$ and $b^{(1)}_{r-1,n+1}=b^{(1)}_{r-1,n}.$ By using the same argument for $X=u_{\tilde{l}_{r-2}+1,\tilde{l}_{n-1}+1}+u_{\tilde{l}_{r-2}+1,\tilde{l}_{n}+1}$ and $w_{l,\tilde{l}_{n-1}+1}+w_{l,\tilde{l}_{n}+1}$ we obtain that $\lambda_2^{(r-1,n+1)}=\lambda_2^{(r-1,n)}$ and $b^{(2)}_{r-1,n+1}=b^{(2)}_{r-1,n},$ respectively. Thus
\begin{center}
$\left\{\begin{array}{l}
\lambda_i^{(r-1,1)}=\dots=\lambda_i^{(r-1,r-2)}=:\lambda_i^{(r-1)}\\
\\
b^{(i)}_{r-1,1}=\dots=b^{(i)}_{r-1,r-2}=:b^{(i)} 
\end{array},\right.$ \ \ \ $i=1,2.$
\end{center}
For $X=w_{\tilde{l}_{r-3}+1,1}+w_{\tilde{l}_{r-2}+1,1},$ $w_{\tilde{l}_{r-3}+1,1}+w_{l1}$ and $u_{\tilde{l}_{r-3}+1,1}+w_{l1}$, we have $\lambda_1^{(r-1)}=\lambda_1$, $b^{(1)}=b^{(2)}=b$ and $\lambda_2^{(r-1)}=\lambda_2.$ Now we will show that $\gamma^{(r-1)}=\frac{\lambda_1^2-b^2}{\lambda_1}=\frac{\lambda_2^2-b^2}{\lambda_2}$, in fact, if $X=w_{l1}+w_{l,\tilde{l}_{r-2}+1}$, then 
\begin{center}
$AX=b\ u_{l1}+\lambda_2w_{l1}+\gamma^{(r-1)}w_{l,\tilde{l}_{r-2}+1}$
\end{center}
and
\begin{center}
$[X,AX]=(\gamma^{(r-1)}-\lambda_2)w_{\tilde{l}_{r-2}+1,1}-b\ u_{\tilde{l}_{r-2}+1,1}.$
\end{center}
Let $Z\in\mathfrak{k}_\Theta$ such that $[Z+X,AX]=0.$ In this case we write $Z$ as
\begin{center}
$Z=\displaystyle\sum\limits_{\begin{subarray}{c}j\leq r-1\\l_j>1\end{subarray}}\left(\sum\limits_{1\leq t<s\leq l_j}z^{(j)}_{st}w_{\tilde{l}_{j-1}+s,\tilde{l}_{j-1}+t}\right)+\sum\limits_{t=1}^{l_{r-1}}z_tu_{l,\tilde{l}_{r-2}+t},$\ \ \ $z^{(j)}_{st},z_t\in\mathbb{R},$
\end{center}
therefore 
\begin{center}
$[Z,AX]=-z_1b\ w_{\tilde{l}_{r-2}+1,1}-z_1\lambda_2u_{\tilde{l}_{r-2}+1,1}+Z',$
\end{center}
where $\{w_{\tilde{l}_{r-2}+1,1},u_{\tilde{l}_{r-2}+1,1},Z'\}$ is linear independent. Since $[Z+X,AX]=0$, then 
\begin{center}
$\left\{\begin{array}{l}
\lambda_2-\gamma^{(r-1)}+z_1b=0\\
\\
b+z_1\lambda_2=0,
\end{array}\right.$
\end{center}
where we have $\gamma^{(r-1)}=\frac{\lambda_2^2-b^2}{\lambda_2}.$ When $X=u_{l1}+w_{l,\tilde{l}_{r-2}+1},$ we have $\gamma^{(r-1)}=\frac{\lambda_1^2-b^2}{\lambda_1}$. Conversely, if $A$ satisfies \eqref{48} every $X\in\mathfrak{m}_\Theta$ can be written as 
\begin{center}
$X=\displaystyle\sum\limits_{\begin{subarray}{c}i=1\\ l_i>1\end{subarray}}^{r-1}Y_i+\sum\limits_{1\leq n<m\leq r-1}X_{mn}+\sum\limits_{1\leq n<m\leq r-1}Y_{mn},$
\end{center}
where $Y_i\in U_i$ if $1\leq i \leq r-2$, $Y_{r-1}\in V_{r-1}$, $X_{mn}\in W_{mn}$, $Y_{mn}\in U_{mn}$ if $m\leq r-2,$ $X_{r-1,n}\in M_n$ and $Y_{r-1,n}\in N_n.$ For $i,m\leq r-2$ we consider $\tilde{X}_{mn},$ $\tilde{Y}_{mn}$ and $\tilde{Y}_i$ as in item $b)$, if  
\begin{center}
$X_{r-1,n}=\left(\begin{array}{cc}
A_{r-1,n} & B_{r-1,n}\\
B_{r-1,n} & A_{r-1,n}\\
\end{array}\right),$ $Y_{r-1,n}=\left(\begin{array}{cc}
C_{r-1,n} & D_{r-1,n}\\
D_{r-1,n} & C_{r-1,n}\\
\end{array}\right)$ and \ $Y_{r-1}=\left(\begin{array}{cc}
C_{r-1} & D_{r-1}\\
D_{r-1} & C_{r-1}\\
\end{array}\right)$
\end{center}
we set
\begin{center}
$\tilde{X}_{r-1,n}=\left(\begin{array}{cc}
B_{r-1,n} & A_{r-1,n}\\
A_{r-1,n} & B_{r-1,n}\\
\end{array}\right),$ $\tilde{Y}_{r-1,n}=\left(\begin{array}{cc}
D_{r-1,n} & C_{r-1,n}\\
C_{r-1,n} & D_{r-1,n}\\
\end{array}\right)$ and \ $\tilde{Y}_{r-1}=\left(\begin{array}{cc}
D_{r-1} & C_{r-1}\\
C_{r-1} & D_{r-1}\\
\end{array}\right).$
\end{center} 
Thus,
\begin{center}
$AX=\displaystyle\gamma\sum\limits_{\begin{subarray}{c}i=1\\ l_i>1\end{subarray}}^{r-1}Y_i+\lambda\sum\limits_{1\leq n<m\leq r-1}X_{mn}+b\sum\limits_{1\leq n<m\leq r-1}\tilde{X}_{mn}+b\sum\limits_{1\leq n<m\leq r-1}\tilde{Y}_{mn}+\lambda\sum\limits_{1\leq n<m\leq r-1}Y_{mn}$,
\end{center}
we can proceed exactly as in the proof of item $a)$ to conclude that $Z=\displaystyle-\frac{b}{\lambda}\sum\limits_{\begin{subarray}{c}i=1\\ l_i>1\end{subarray}}^{r-1}\tilde{Y}_i$ implies
\begin{center}
$\begin{array}{ccl}
\displaystyle [Z+X,AX] & = & \displaystyle\left(\lambda-\gamma-\frac{b^2}{\lambda}\right)\left[\sum\limits_{\begin{subarray}{c}i=1\\l_i>1\end{subarray}}^{r-1}Y_i,\sum\limits_{1\leq n<m\leq r-1}X_{mn}\right]+\left(\lambda-\gamma-\frac{b^2}{\lambda}\right)\left[\sum\limits_{\begin{subarray}{c}i=1\\l_i>1\end{subarray}}^{r-1}Y_i,\sum\limits_{1\leq n<m\leq r-1}Y_{mn}\right]\\
 \\
 & = & 0.
\end{array}$
\end{center}
Hence $A$ is a g.o. metric.
\end{proof}

\bibliographystyle{abbvr}

\renewcommand\refname{}
\section*{Bibliography}

\end{document}